\documentclass[12pt, a4paper]{amsart}
\usepackage[english]{babel}
\usepackage{amsmath,enumerate, amsfonts, amssymb,amsthm, xypic}
\usepackage[dvips]{graphics} 
\usepackage{color}
\usepackage{graphicx}
\usepackage{hyperref}
\usepackage[hyperpageref]{backref}

\newtheorem{thm}{Theorem}[section]
\newtheorem{lemma}[thm]{Lemma}
\newtheorem{prop}[thm]{Proposition}
\newtheorem{cor}[thm]{Corollary}

\theoremstyle{definition}
\newtheorem{defi}[thm]{Definition}

\newtheorem{ex}[thm]{Example}
\newtheorem{rmk}[thm]{Remark}
\newtheorem{Q}[thm]{Question}

\setlength{\textwidth}{\paperwidth}
\addtolength{\textwidth}{-6cm}
\setlength{\textheight}{\paperheight}
\addtolength{\textheight}{-5cm}
\addtolength{\textheight}{-\headheight}
\addtolength{\textheight}{-\headsep}
\addtolength{\textheight}{-\footskip}
\setlength{\oddsidemargin}{0.5cm}
\setlength{\evensidemargin}{0.5cm}
\setlength{\topmargin}{-0.5cm}

\pagestyle{plain}




\DeclareMathOperator{\R}{\mathbf R}

\DeclareMathOperator{\N}{\mathbf N}

\DeclareMathOperator{\Ima}{Im}

\DeclareMathOperator{\graph}{graph}

\DeclareMathOperator{\eps}{\epsilon}
\DeclareMathOperator{\Pui}{\mathbb R((t^{1/\infty}))}
\DeclareMathOperator{\wR}{\widetilde \R}

\DeclareMathOperator{\wY}{\widetilde Y}
\DeclareMathOperator{\ord}{ord}

\title{Continuous mappings between spaces of arcs}
\author{Goulwen Fichou and Masahiro Shiota}
\thanks{The first author has been supported by the ANR project ANR-08-JCJC-0118-01.}
\address{IRMAR (UMR 6625), Universit\'e de Rennes 1, Campus de
  Beaulieu, 35042 Rennes Cedex, France}
\address{Graduate School of Mathematics, Nagoya University, Chikusa, Nagoya, 
464-8602, Japan}
\date\today
\subjclass[2010]{14P15}
\keywords{blow-analytic homeomorphism, arc space, real closed field, o-minimal structure}

\begin{document}

\begin{abstract} A blow-analytic homeomorphism is an arc-analytic subanalytic homeomorphism, and therefore it induces a bijective mapping between spaces of analytic arcs. We tackle the question of the continuity of this induced mapping between the spaces of arcs, giving a positive and a negative answer depending of the topology involved. We generalise the result to spaces of definable arcs in the context of o-minimal structures, obtaining notably a uniform continuity property.
\end{abstract}
\maketitle

\section*{Introduction}

The blow-analytic equivalence between real analytic function germs \cite{kuo} is an interesting counterpart in the real setting to the topological equivalence between complex analytic function germs. For $f,g:(\mathbb R^n,0)\to(\mathbb R,0)$ analytic function germs, we say that
$f$ and $g$ are blow-analytically equivalent if there exists a blow-analytic homeomorphism germ $\phi:(\mathbb R^n,0) \to (\mathbb R^n,0)$ such that $f=g\circ\phi$. A homeomorphism $\phi:U\to V$ between open subsets $U$ and $V$ of $\mathbb R^n$ is called a blow-analytic homeomorphism if there exist two finite sequences of blowings-up along smooth analytic centres $\pi:M\to U$ and $\sigma:N\to V$ and an analytic diffeomorphism $\Phi:M\to N$ such that $\phi\circ\pi=\sigma\circ\Phi$.

If the definition of blow-analytic equivalence via sequences of blowings-up makes it difficult to study, it has also very nice properties (cf. \cite{FP} for a survey). In particular a blow-analytic homeomorphism $\phi: U\to V$ is arc-analytic \cite{KK}, namely if $\gamma \in \mathbb R \{t\}^n$ is a n-uplet of convergent power series on a neighbourhood of $0\in \mathbb R$ with $\gamma (0)\in U$, then $\phi \circ \gamma$ is analytic.
In particular a blow-analytic homeomorphism $\phi$ induces a bijective mapping $\phi_*$ between the spaces of analytic arcs at the origin of $\mathbb R^n$. The nice behaviour of the blow-analytic equivalence with respect to arcs and more generally spaces of arcs have already produced very interesting invariants (such as the Fukui invariants \cite{fukui}, zeta functions \cite{KP1,fichou}), a complete classification in dimension two \cite{KP2}, or explained some relations with respect to bi-Lipschitz property \cite{FKP}.

The first question we address in this paper is very natural in this context: given a blow-analytic homeomorphism $\phi$, is the induced mapping $\phi_*$ between the spaces of analytic arcs continuous?
It is natural to hope that such a homeomorphism induces a homeomorphism between arcs, even if the definition of a blow-analytic homeomorphism via sequences of blowings-up makes it difficult to handle directly. We offer in this paper two answers to this question.

A first answer is that the induced mapping is not continuous, even at the level of truncated arcs, when we considered $\mathbb R \{t\}^n$ endowed with the product topology. We show the existence of a counter-example in dimension two in section \ref{c-ex}, where the sequence of blowings-up consists simply of the blowing-up $\pi:M\to \mathbb R^2$ at the origin of $\mathbb R^2$. 

A second answer is that the induced mapping is continuous... if $\mathbb R \{t\}^n$ is endowed with the $t$-adic topology (cf. Theorem \ref{main})! The result is actually a simple consequence of the H\"older property of subanalytic maps. However this question has a natural generalisation in the context of o-minimal structures over a real closed field where such a H\"older property is no longer available. Nevertheless, the tameness of an o-minimal structure should guaranty that the continuity of a mapping over a real closed field continues to hold when we naturally extend the field to another real closed field, and the mapping to a mapping over the extended field. And actually, if the H\"older property suffices to obtain the continuity in the case of subanalytic mappings over real numbers, a generalisation of \L ojasiewicz Inequality to locally closed definable sets (Proposition \ref{L-loc}) enable to control the behaviour of arcs in the o-minimal setting in order to keep the continuity at the level of spaces of definable arcs.

We propose moreover another approach, more natural in the non necessary locally closed case (e.g. a bijection coming from a resolution of the singularities as in Example \ref{exam}), and following a geometric approach parallel to the model-theoretic point of view developed in \cite{Coste}. In particular, we study more in details in section \ref{trans} the transport of properties between the initial o-minimal structure other a given real closed field and the new o-minimal structure on the real closed field of germs of definable arcs at the origin. We obtain moreover in a very simple way a uniform continuity property in Proposition \ref{prop-fin}.

\vskip 5mm

{\bf Acknowledgements.} The first author wish to thank K. Kurdyka, O. Le Gal, M. Raibaut and S. Randriambololona for valuable remarks.
\section{Blow-analytic homeomorphisms and continuity}\label{sect-bah}

If a blow-analytic homeomorphism induces a continuous mapping at the level of spaces of arcs considered with the $t$-adic topology (cf. Theorem \ref{main}), we prove the existence of a blow-analytic homeomorphism which does not induce a continuous mapping at the level of spaces of arcs when we considered it with the product topology. The counter-example is produced in section \ref{c-ex}.

\subsection{Blow-analytic homeomorphisms}

\begin{defi} Let $U$ and $V$ be open subsets of $\mathbb R^n$. 
A homeomorphism $\phi:U\to V$ is called a {\it blow-analytic homeomorphism} if there exist two finite sequences of blowings-up along smooth analytic centres $\pi:M\to U$ and $\sigma:N\to V$ and an analytic diffeomorphism $\Phi:M\to N$ such that $\phi\circ\pi=\sigma\circ\Phi$.
\end{defi}

Denote by $\mathbb R \{t\}$ the one-variable convergent power series ring and $\mathfrak m$ its maximal ideal. We consider in this section $\mathbb R\{t \}$ equipped either with the $t$-adic topology or with the product topology.
We regard $\mathbb R \{ t\}^n$ as the family of analytic curve germs $c:[0;\,\epsilon)\to\mathbb R^n$, with $\epsilon>0\in\mathbb R$, at $0$ and let $\mathcal A_{0}(\mathbb R^n)$ denote those curve germs $c$ with $c(0)=0$.
Set 
$$\mathcal A_{U}(\mathbb R^n)=\{c\in\mathbb R \{t\}^n:c(0)\in U\}$$
for an open subset $U$ of $\mathbb R^n$. 
We identify $\mathcal A_{U}(\mathbb R^n)$ with $U\times\mathcal A_{0}(\mathbb R^n)$ by the correspondence $$\mathcal A_{U}(\mathbb R^n)\ni c\to(c(0),c-c(0))\in U\times\mathcal A_{0}(\mathbb R^n).$$

\subsection{$t$-adic topology}\label{res-t}

Let $h: (\mathbb R^n,0) \to (\mathbb R^n,0)$ be an arc-analytic mapping germ \cite{KK}, namely any analytic arc $\gamma: (-\epsilon;\epsilon) \to \mathbb R^n$, where $\epsilon>0$, with $\gamma (0)=0$ is sent by $h$ into an analytic arc $h\circ \gamma :(-\epsilon';\epsilon') \to \mathbb R^n$ for some $\epsilon'>0$. Then $h$ defines a mapping from $\mathcal A_{0}(\mathbb R^n)$ to $\mathcal A_{0}(\mathbb R^n)$, denoted by $h_*$ in the sequel.
We considered $\mathcal A_{0}(\mathbb R^n)$ with its $t$-adic topology.

\begin{rmk}\label{rmk-bah} A blow-analytic homeomorphism is arc-analytic since any analytic arc $c:[0;\,\epsilon)\to U$ may be lifted via a sequence of blowings-up along smooth analytic centres $\pi:M\to U$ to an analytic arc $d:[0;\,\epsilon)\to M$ such that $c=\pi \circ d$ (cf. \cite{FP} section 5 for example). In particular a blow-analytic homeomorphism $\phi:U\to V$ induces a mapping 
$$\phi_*:\mathcal A_{U}(\mathbb R^n)\ni c\to\phi\circ c\in \mathcal A_{V}(\mathbb R^n)$$
Note that this mapping is moreover bijective.
\end{rmk}

\begin{thm}\label{main} Let $h: (\mathbb R^n,0) \to (\mathbb R^n,0)$ be a
  subanalytic homeomorphism. Assume $h$ and $h^{-1}$ are arc analytic. The induced mapping $h_*:\mathcal A_{0}(\mathbb R^n) \to  \mathcal A_{0}(\mathbb R^n)$ is a uniformly continuous homeomorphism with respect to the $t$-adic topology.
\end{thm}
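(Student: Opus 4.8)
The $t$-adic topology on $\mathcal A_0(\mathbb R^n)$ is induced by the ultrametric where two arcs are close when they agree up to high order in $t$; equivalently, the basic neighbourhoods of an arc $c$ are the sets $c + \mathfrak m^k \mathbb R\{t\}^n$. So to prove that $h_*$ is uniformly continuous it suffices to produce, for every $k$, some $l$ such that whenever two arcs $c_1, c_2 \in \mathcal A_0(\mathbb R^n)$ satisfy $c_1 \equiv c_2 \bmod t^l$, one has $h\circ c_1 \equiv h\circ c_2 \bmod t^k$. Since $h$ is a homeomorphism with $h$ and $h^{-1}$ both arc-analytic, proving this for $h$ also gives it for $h^{-1} = (h_*)^{-1}$, whence $h_*$ is a uniformly continuous bijection with uniformly continuous inverse, i.e. a uniform homeomorphism; so the whole statement reduces to the uniform continuity estimate above.

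The plan is to exploit the \L ojasiewicz/Hölder property of subanalytic maps, as announced in the introduction. First I would recall that a subanalytic map $h$ defined near $0$ with $h(0)=0$ satisfies a Hölder estimate $|h(x)-h(y)| \le C|x-y|^{\alpha}$ for $x,y$ in a neighbourhood of $0$, with constants $C>0$ and $\alpha \in (0,1]$. (Strictly, one applies this on a compact neighbourhood; the arcs have images shrinking to $0$, so for the order computation only germ behaviour matters and I may restrict to a small ball.) Next, given arcs $c_1, c_2$ with $c_1 - c_2 \in \mathfrak m^l \mathbb R\{t\}^n$, I estimate $\ord_t(h\circ c_1 - h\circ c_2)$: writing $x = c_1(t)$, $y = c_2(t)$, one has $|h\circ c_1(t) - h\circ c_2(t)| \le C|c_1(t)-c_2(t)|^{\alpha} = O(|t|^{\alpha l})$ as $t \to 0^+$. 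A real analytic (indeed, even continuous subanalytic) function of $t$ that is $O(|t|^{m})$ as $t\to 0^+$ has $t$-adic order at least $m$ once $m$ is an integer; so choosing $l$ with $\alpha l \ge k$, i.e. $l = \lceil k/\alpha\rceil$, gives $\ord_t(h\circ c_1 - h\circ c_2)\ge k$, which is exactly $h\circ c_1 \equiv h\circ c_2 \bmod t^k$. The same $l$ works for every pair of arcs, which is the uniformity; and the very same argument with $h^{-1}$ (Hölder exponent $\alpha'$, constant $C'$) handles the inverse, with $l' = \lceil k/\alpha'\rceil$.

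A couple of technical points need care. One must check that $h_*$ is well-defined and bijective on $\mathcal A_0(\mathbb R^n)$ — for arc-analyticity this is the hypothesis, and bijectivity follows since $h$ is a homeomorphism with arc-analytic inverse, so $(h^{-1})_*$ is a two-sided inverse of $h_*$. One should also note that $h\circ c$, being analytic and vanishing at $0$, automatically lies in $\mathfrak m\,\mathbb R\{t\}^n$, so the order estimates live in the right space. The main (and really the only) obstacle is the passage from the archimedean Hölder estimate "$|h\circ c_1(t) - h\circ c_2(t)| = O(|t|^{\alpha l})$ near $t=0$" to the algebraic statement "$\ord_t \ge \lceil \alpha l\rceil$": this is where one uses that $h\circ c_i$ are genuine convergent power series (so their difference is too), hence the growth order of the real function equals the $t$-adic valuation; this is elementary but is the conceptual heart, and it is also exactly the point that fails over a general real closed field, motivating the rest of the paper. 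Everything else is bookkeeping with the ultrametric.
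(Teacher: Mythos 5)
Your proposal is correct and follows essentially the same route as the paper: invoke the H\"older property of subanalytic maps coming from the \L ojasiewicz inequality to get $|h(x)-h(y)|\le c|x-y|^{\alpha}$ near $0$, deduce $\ord(h\circ c_1-h\circ c_2)\ge \alpha\,\ord(c_1-c_2)$, and conclude uniform $t$-adic continuity for $h_*$ and symmetrically for $(h^{-1})_*$. Your additional remarks (passing from the $O(|t|^{\alpha l})$ bound to the $t$-adic valuation via analyticity of $h\circ c_i$, and the bijectivity of $h_*$) only flesh out points the paper leaves implicit.
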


\begin{proof} As a consequence of \L ojasiewicz inequality (\cite{BM}, Theorem 6.4), a subanalytic homeomorphism $h$ is H\"older, and so there exist $\alpha \in \mathbb Q$, with $\alpha \in ]0,1]$, such that
$$|h(x)-h(y)| \leq c|x-y|^{\alpha}$$
for $x,y$ close to 0, where $c>0$.
In particular, if $\gamma, \delta: [0,\epsilon) \to (\R^n,0)$ are analytic arcs, then
$$\ord (h\circ \gamma (t)- h\circ \delta (t)) \geq \alpha \ord (\gamma (t)- \delta (t)),$$
and this gives the continuity with respect to the t-adic topology.
\end{proof}

Theorem \ref{main} extends to a global version on compact analytic manifolds. Let $M\subset \mathbb R^n$ be a compact analytic manifold, and denote by $\mathcal A(M)$ those arcs in $\mathbb R\{t\}^n$ with origin in $M$. We consider $\mathcal A(M)$ endowed with the topology induced by that of $\mathbb R\{t\}^n$.

\begin{thm}\label{thm-comp} Let $M\subset \mathbb R^m$ and $N\subset \mathbb R^n$ be compact analytic manifolds and $h:M \to N$ be a subanalytic homeomorphism. Assume that $h$ and $h^{-1}$ are arc analytic. The induced mapping $h_*: \mathcal A(M) \to \mathcal A(N)$ is a uniformly continuous homeomorphism.
\end{thm}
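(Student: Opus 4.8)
The plan is to reduce Theorem \ref{thm-comp} to a local statement that mirrors Theorem \ref{main}, using compactness to patch the local estimates into a uniform one. First I would invoke \L ojasiewicz inequality for subanalytic maps on compact sets: since $M$ is compact and $h$ is subanalytic, there exist $c>0$ and $\alpha\in]0,1]$ (with a similar pair for $h^{-1}$) such that $|h(x)-h(y)|\leq c|x-y|^{\alpha}$ for all $x,y\in M$ — the point being that compactness makes the H\"older exponent and constant global rather than merely germ-wise. The extra subtlety compared to the germ case is that $\mathcal A(M)$ is not localised at a single point, so I must check that two arcs $\gamma,\delta\in\mathcal A(M)$ which are $t$-adically close automatically have the same origin $\gamma(0)=\delta(0)$: indeed $\ord(\gamma(t)-\delta(t))\geq 1$ already forces $\gamma(0)=\delta(0)$, so the comparison takes place inside a single fibre of the identification $\mathcal A(M)\simeq$ (base point)$\times$(arcs based there), and the inclusion $N\subset\mathbb R^n$ lets us read off $\ord$ coordinate-wise.

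Next I would run exactly the computation from the proof of Theorem \ref{main}: for $\gamma,\delta\in\mathcal A(M)$ with $\gamma(0)=\delta(0)$, write $h\circ\gamma(t)-h\circ\delta(t)$ and apply the global H\"older estimate pointwise in $t$ along the arc (or, more carefully, along an analytic lift through the resolving sequences of blowings-up, which exists by Remark \ref{rmk-bah}), to get
$$\ord\bigl(h\circ\gamma(t)-h\circ\delta(t)\bigr)\geq\alpha\,\ord\bigl(\gamma(t)-\delta(t)\bigr).$$
This inequality, valid with a single $\alpha$ for all pairs of arcs on $M$, is precisely the statement that $h_*$ is uniformly continuous for the $t$-adic metric $d(\gamma,\delta)=2^{-\ord(\gamma-\delta)}$: given $\eta>0$ choose $N$ with $2^{-\alpha N}<\eta$, then $d(\gamma,\delta)<2^{-N}$ implies $d(h_*\gamma,h_*\delta)<\eta$, with no dependence on the base point. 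Applying the symmetric estimate to $h^{-1}$ shows $(h^{-1})_*=(h_*)^{-1}$ is also uniformly continuous, so $h_*$ is a uniform homeomorphism.

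Finally I should note that $h_*$ is well-defined and bijective: well-definedness is arc-analyticity of $h$ together with the fact that $h$ maps $M$ to $N$ (so origins land in $N$), bijectivity follows since $h$ and $h^{-1}$ are mutually inverse arc-analytic maps and $(h^{-1})_*\circ h_*=\mathrm{id}$ on the nose. The main obstacle I anticipate is not the estimate itself but being careful about the topology on $\mathcal A(M)$: the excerpt endows it with the \emph{subspace topology from the product topology} on $\mathbb R\{t\}^n$, whereas the argument naturally produces uniform continuity for the \emph{$t$-adic} topology — so either the statement should be read with the $t$-adic topology understood (consistent with the whole thrust of section \ref{res-t}), or one must observe that on the compact set $\mathcal A(M)$ the relevant sequences behave well enough that the claimed continuity still holds; I would resolve this by simply working throughout with the $t$-adic topology, which is the only reading under which the theorem (and its uniform continuity clause) is true, and the compact case then differs from Theorem \ref{main} solely in the globalisation of the \L ojasiewicz constants.
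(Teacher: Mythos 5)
Your proof is correct, but it is not the argument the paper actually writes down for this statement. In section \ref{res-t} the theorem is stated with only the remark that Theorem \ref{main} ``extends to a global version''; the proof the paper eventually supplies is Corollary \ref{cor-last}, which derives the result from the o-minimal machinery of section \ref{co}: continuity of $\widetilde h$ via closedness of the graph and the transport lemmas (Corollary \ref{coro-main}), uniform continuity via a curve-selection argument on closed and bounded definable sets (Proposition \ref{unif}), and then restriction to analytic arcs. What you do instead is exactly the globalisation of the H\"older computation of Theorem \ref{main}: compactness of $M$ turns the \L ojasiewicz--H\"older estimate of \cite{BM} into a single pair $(c,\alpha)$ valid on all of $M\times M$, and the inequality $\ord(h\circ\gamma-h\circ\delta)\geq\alpha\,\ord(\gamma-\delta)$ then gives a modulus of uniform continuity independent of the base point; your observation that $t$-adically close arcs share their origin correctly reduces everything to the fibres of $\mathcal A(M)\to M$. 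Your route is more elementary and self-contained, but it is tied to the polynomially bounded subanalytic setting over $\mathbb R$, whereas the paper's route works over any real closed field and any o-minimal structure; note also that with your ultrametric $2^{-\ord}$ two arcs with distinct origins are at distance $1$, so your uniform continuity only quantifies over pairs with a common origin, while the uniformity coming from the Euclidean structure of the Puiseux field (used in Proposition \ref{unif}) also sees pairs with distinct but nearby origins --- the underlying topologies agree, so the ``homeomorphism'' part is unaffected either way. One small remark: the parenthetical appeal to lifts through the blowings-up (Remark \ref{rmk-bah}) is not needed for the estimate; arc-analyticity is used only to guarantee that $h\circ\gamma$ is an analytic arc so that $\ord$ makes sense, and the H\"older inequality is applied directly to the points $h(\gamma(t))$ and $h(\delta(t))$ in $N\subset\mathbb R^n$.
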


If the ambient spaces are no longer compact, we keep the continuity of the induced mapping between analytic arcs since continuity is a local property:

\begin{thm}\label{thm-R} Let $h: \mathbb R^n \to \mathbb R^n$ be a
  subanalytic homeomorphism. Assume $h$ and $h^{-1}$ are arc analytic. Then $h_*:\mathbb R\{t\}^n \to \mathbb R\{t\}^n$ is a homeomorphism.
\end{thm}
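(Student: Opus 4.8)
The plan is to reduce the statement to Theorem \ref{main} by localising at the base point, using a feature special to the $t$-adic topology: the constant-term map $\mathbb R\{t\}^n\to\mathbb R^n$, $c\mapsto c(0)$, is locally constant. Indeed, for $x_0\in\mathbb R^n$ the fibre $\mathcal A_{x_0}:=\{c\in\mathbb R\{t\}^n:c(0)=x_0\}$ equals $x_0+\mathcal A_0(\mathbb R^n)$, and $\mathcal A_0(\mathbb R^n)=\{c:\ord c\geq 1\}$ is a basic $t$-adic neighbourhood of $0$; hence each $\mathcal A_{x_0}$ is open and closed, and $\mathbb R\{t\}^n=\bigsqcup_{x_0\in\mathbb R^n}\mathcal A_{x_0}$. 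The induced bijection $h_*\colon c\mapsto h\circ c$ — bijective, with inverse $(h^{-1})_*$, since both $h$ and $h^{-1}$ are arc-analytic — carries $\mathcal A_{x_0}$ onto $\mathcal A_{h(x_0)}$, so it is enough to prove that each restriction $h_*\colon\mathcal A_{x_0}\to\mathcal A_{h(x_0)}$ is continuous, and symmetrically for $h^{-1}$.

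To handle one such restriction, I would fix $a=x_0$, $b=h(x_0)$, and set $g=\tau_{-b}\circ h\circ\tau_a$, where $\tau_v$ denotes translation by $v$. Then $g(0)=0$, $g$ is a subanalytic homeomorphism, and $g$ and $g^{-1}$ are arc-analytic, since composing with the analytic diffeomorphisms $\tau_{\pm a},\tau_{\pm b}$ preserves all of these properties (and subanalyticity need only be tested near $0$). For $c\in\mathcal A_a$ one has $h\circ c=g\circ(c-a)+b$, so $h_*|_{\mathcal A_a}$ is the composite of translation by $-a$ from $\mathcal A_a$ to $\mathcal A_0(\mathbb R^n)$, then $g_*$, then translation by $b$ from $\mathcal A_0(\mathbb R^n)$ to $\mathcal A_b$. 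The two translations are homeomorphisms (indeed $t$-adic isometries), and $g_*$ is a uniformly continuous homeomorphism by Theorem \ref{main}; hence $h_*|_{\mathcal A_a}$ is continuous. Applying the same argument to $h^{-1}$ shows that $(h_*)^{-1}=(h^{-1})_*$ is continuous on each fibre, and gluing over the open cover $\{\mathcal A_{x_0}\}_{x_0\in\mathbb R^n}$ gives that $h_*$ is a homeomorphism of $\mathbb R\{t\}^n$.

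I do not expect a genuine obstacle here. The two points needing care are: (i) that the $t$-adic topology decomposes $\mathbb R\{t\}^n$ into the clopen fibres $\mathcal A_{x_0}$ — this is precisely the property that fails for the product topology, and is the conceptual reason the theorem holds here while its analogue for the product topology does not (the counter-example of section \ref{c-ex}); and (ii) the routine verification that the translated germ $g$ still satisfies the hypotheses of Theorem \ref{main}. With these in hand the remainder is formal: continuity is a local property, and on each clopen piece the problem is, after translation, literally Theorem \ref{main}.
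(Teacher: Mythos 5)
Your argument is correct and is essentially the paper's own justification: the paper deduces Theorem \ref{thm-R} from Theorem \ref{main} with the single remark that ``continuity is a local property,'' and your decomposition of $\mathbb R\{t\}^n$ into the $t$-adically clopen fibres $\mathcal A_{x_0}$ of $c\mapsto c(0)$, followed by translation of each fibre to reduce to the germ case of Theorem \ref{main}, is precisely the correct fleshing-out of that remark. (The paper also rederives the statement later from Corollary \ref{coro-main}, but that is a second, independent route and not the proof attached to this theorem.)
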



\subsection{Product topology}\label{c-ex}
For a blow-analytic homeomorphism $\phi$ as in Remark \ref{rmk-bah}, it is natural to hope that the induced mapping $\phi_*$ is a homeomorphism when we considered the product topology. However, it is not difficult to find a counter-example!

In the sequel, we focus therefore on some particular blow-analytic homeomorphisms, namely those blow-analytic homeomorphisms which induce a blow-analytic equivalence between some analytic functions (we refer to \cite{FP} for similar distinctions between different kind of blow-analytic homeomorphisms).

\begin{defi} Let $f,g:(\mathbb R^n,0)\to(\mathbb R,0)$ be analytic function germs. 
We call $f$ and $g$ {\it blow-analytically equivalent} if there exist open subsets $U$ and $V$ of $\mathbb R^n$ containing 0 and a blow-analytic homeomorphism $\phi:U\to V$ such that $\phi(0)=0$ and $f=g\circ\phi_0$, where $\phi_0$ denotes the germ of $\phi$ at 0. 
We call $\phi_0:(\mathbb R^n,0)\to(\mathbb R^n,0)$ a {\it blow-analytic homeomorphism germ}. 
\end{defi}

The problem we address is then as follows.

\begin{Q}\label{Ques} Let $\phi$ be a blow-analytic homeomorphism germ which realises the blow-analytic equivalence of two non-zero analytic function germs. Is the induced mapping $\phi_*:\mathcal A_{0}(\mathbb R^n)\ni c\to\phi\circ c\in\mathcal A_{0}(\mathbb R^n)$ continuous in the product topology?
\end{Q}
  
In the following we give a negative answer to Question \ref{Ques}.
We prove the existence of a counter-example by an explicit construction. For the counter-example, we fix $n=2$ and define $f,g:(\mathbb R^2,0)\to(\mathbb R,0)$ by $f(x,y)=g(x,y)=y$. We define a family of curves $c_{\epsilon}\in\mathcal A_{0}(\mathbb R^2)$ by 
$$c_\epsilon(t)=(\epsilon t,t^2)$$ 
for $\epsilon\in\mathbb R$. 
Then $c_\epsilon$ converges to $ c_0$ in $\mathcal A_{0}(\mathbb R^2)$ for the product topology as $\epsilon$ goes to $0$.

\begin{figure}
\centering
\includegraphics[height =4cm]{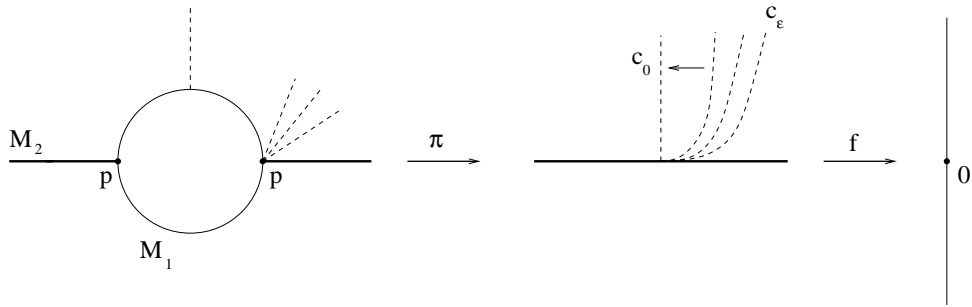}
\caption{Blowing-up and curves.}\label{fig}
\end{figure}

Let $\pi:M\to\mathbb R^2$ denote the blowing-up of $\mathbb R^2$ along center $\{0\}$. We will prove the existence of an analytic diffeomorphism $\Phi$ of $M$ such that:
\begin{enumerate}
\item[(i)] $\Phi$ induces a blow-analytic homeomorphism $\phi$ of $\mathbb R^2$ with $\phi(0)=0$, 
\item[(ii)] $f\circ\phi=f$ and 
\item[(iii)] $(\phi_0)_*(c_\epsilon)\not\to(\phi_0)_*(c_0)$ as $\epsilon\to0$.
\end{enumerate}

Let $M_1=\pi^{-1}(0)$ denote the exceptional divisor of $\pi$ and $M_2$ denote the closure of $\pi^{-1}(\mathbb R\times\{0\}-\{0\})$ in $M$.

\begin{lemma}\label{lemtau} Let $\Phi$ be an analytic diffeomorphism
  of $M$ such that (i) and (ii) hold. Then $\Phi(M_1)=M_1$ and $\Phi(M_2)=M_2$.
\end{lemma}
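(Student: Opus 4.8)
The plan is to characterize $M_1$ and $M_2$ intrinsically, using only properties that the diffeomorphism $\Phi$ must preserve, namely the structure of $\pi$ near the exceptional divisor together with conditions (i) and (ii). First I would recall that $M_1=\pi^{-1}(0)$ is the unique compact analytic curve in $M$; indeed $\pi$ is an isomorphism away from $M_1$ onto $\mathbb R^2\setminus\{0\}$, which contains no compact analytic curve. Hence any analytic diffeomorphism of $M$ must send the unique compact analytic curve to itself, giving $\Phi(M_1)=M_1$ without even invoking (i) or (ii). (Alternatively, one characterizes $M_1$ as the set of points of $M$ at which $\pi$ fails to be a local diffeomorphism, i.e. the critical locus of $\pi$; but since $\Phi$ need not a priori commute with $\pi$, the compactness argument is cleaner.)

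Next I would pin down $M_2$. The key is condition (ii): $f\circ\phi=f$ with $f(x,y)=y$ means $\phi$ preserves each horizontal line $\{y=c\}$ setwise, and in particular $\phi$ maps the $x$-axis $\mathbb R\times\{0\}$ to itself. Since $\phi(0)=0$ by (i), $\phi$ restricts to a homeomorphism of $\mathbb R\times\{0\}-\{0\}$ onto itself, hence preserves $\pi^{-1}(\mathbb R\times\{0\}-\{0\})$ via the conjugation $\phi\circ\pi=\sigma\circ\Phi$ (here $\sigma=\pi$ since both resolutions blow up the same center $\{0\}$, or more precisely $\Phi$ is compatible with $\phi$ on the locus where $\pi$ is an isomorphism). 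Taking closures in $M$ and using that $\Phi$ is a homeomorphism, we get $\Phi(M_2)=M_2$.

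The main obstacle is the bookkeeping at the interface between the two pictures: one must be careful that the identity $\phi\circ\pi=\sigma\circ\Phi$ in the definition is stated with possibly different resolutions $\pi$ and $\sigma$, whereas here the construction fixes a single blowing-up $\pi:M\to\mathbb R^2$ and asks for a self-diffeomorphism $\Phi$ of $M$ inducing $\phi$; so implicitly $\sigma=\pi$ and $\phi\circ\pi=\pi\circ\Phi$ on all of $M$ (this is what ``$\Phi$ induces $\phi$'' means in (i)). Granting that, the argument for $M_2$ is immediate: over $M\setminus M_1$ the map $\Phi$ is conjugate via $\pi$ to $\phi$ restricted to $\mathbb R^2\setminus\{0\}$, which carries $\mathbb R\times\{0\}-\{0\}$ to itself by (ii), so $\Phi$ carries $\pi^{-1}(\mathbb R\times\{0\}-\{0\})$ to itself, and continuity of $\Phi$ extends this to the closure $M_2$. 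I would also note for use in the sequel that $\Phi$ fixes the unique intersection point $M_1\cap M_2$, since that point is the only point of $M_1$ through which the strict transform $M_2$ passes and both $M_1$ and $M_2$ are $\Phi$-invariant.
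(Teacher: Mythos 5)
Your argument for $\Phi(M_2)=M_2$ is sound and is essentially the paper's: condition (ii) pulled back through $\pi$ says $f\circ\pi\circ\Phi=f\circ\pi$, the punctured $x$-axis is preserved by $\phi$, and one concludes by taking closures (the paper phrases this as $(f\circ\pi)^{-1}(0)=M_1\cup M_2$ and subtracts off the already-known $\Phi(M_1)=M_1$; same substance). Your closing observation that $\Phi$ fixes $p=M_1\cap M_2$ is also correct and is indeed what the sequel uses.

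However, your argument for $\Phi(M_1)=M_1$ has a genuine gap: it is false that $M_1$ is the unique compact analytic curve in $M$, and false that $\mathbb R^2\setminus\{0\}$ contains no compact analytic curve. The unit circle $\{x^2+y^2=1\}$ is a compact real analytic curve in $\mathbb R^2\setminus\{0\}$, so its preimage under $\pi$ is a compact analytic curve in $M$ distinct from $M_1$. You are importing a fact from the complex setting (where the exceptional $\mathbb P^1$ is the unique compact curve in the blow-up of $\mathbb C^2$, by negativity of self-intersection); in the real category this characterization simply fails, so an arbitrary analytic diffeomorphism of $M$ need not preserve $M_1$, and you do need condition (i). The fix is the one you half-dismiss in your parenthetical and then grant in your last paragraph: (i) means precisely that $\pi\circ\Phi=\phi\circ\pi$ with $\phi$ a homeomorphism of $\mathbb R^2$ fixing $0$, whence $\pi(\Phi(M_1))=\phi(\pi(M_1))=\{0\}$, so $\Phi(M_1)\subset\pi^{-1}(0)=M_1$, and applying the same to $\Phi^{-1}$ gives equality. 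This is exactly the paper's (one-line) argument, which records that (i) is \emph{equivalent} to $\Phi(M_1)=M_1$.
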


\begin{proof} Condition (i) is equivalent to the condition $\Phi(M_1)=M_1$ while condition (ii) is equivalent to the condition 
\begin{equation}\tag{ii'}
f\circ\pi\circ\Phi=f\circ\pi
\end{equation}
Then$(f\circ\pi)^{-1}(0)=M_1\cup M_2$ and hence condition (ii') implies $\Phi(M_2)=M_2$.
\end{proof}

Assume that there exists an analytic diffeomorphism $\Phi$ with the conditions (i), (ii) and (iii) satisfied. 
We describe when (iii) holds by a coordinate system. 
Denote by $p$ the intersection point $M_1\cap M_2=\{p\}$ and consider $\pi$ around $p$. Choose a chart $U$ in $M$ such that in the chart $(M,M_1,M_2,p)$ is equal to $(\mathbb R^2,\{0\}\times\mathbb R,\mathbb R\times\{0\},(0,0))$ and the restriction of $\pi$ to $U$ coincides with the mapping 
$$\mathbb R^2\ni(u,v)\to(u,u v)=(x,y)\in\mathbb R^2$$
that we still denote by $\pi$. Note that in this chart $U$, the set $\Ima c_\epsilon$, for $\epsilon\not=0$, is included in $\Ima\pi$ whereas $\Ima c_0$ is not.

The restriction of $\Phi$ to $U$ is of the form $(u\Phi_1(u,v),v\Phi_2(u,v))$ for some analytic functions $\Phi_1$ and $\Phi_2$ on $\mathbb R^2$ which vanish nowhere by Lemma \ref{lemtau}. 

\begin{lemma}\label{chart} The image by $\phi$ of the family of curves $c_{\epsilon
}$, for $\epsilon>0$, is given by
$$\phi_*(c_\epsilon)(t)=(\epsilon t\Phi_1(\epsilon t,\frac{t}{\epsilon}),t^2).$$
\end{lemma}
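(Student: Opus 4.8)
The plan is to compute directly in the chart $U$, tracking where the curve $c_\epsilon$ goes under the lift, under $\Phi$, and back down via $\pi$. First I would observe that for $\epsilon \neq 0$ the image of $c_\epsilon$ lies in $\Ima \pi$ near the origin (as already noted in the text), so $c_\epsilon$ admits an analytic lift $\widetilde{c_\epsilon}:[0;\epsilon')\to U$ with $\pi\circ\widetilde{c_\epsilon} = c_\epsilon$. Solving $(u, uv) = (\epsilon t, t^2)$ gives $u = \epsilon t$ and $v = t^2/(\epsilon t) = t/\epsilon$, so $\widetilde{c_\epsilon}(t) = (\epsilon t, t/\epsilon)$.

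Next I would apply $\Phi$ in the chart: $\Phi(\widetilde{c_\epsilon}(t)) = \bigl((\epsilon t)\,\Phi_1(\epsilon t, \tfrac{t}{\epsilon}),\ (\tfrac{t}{\epsilon})\,\Phi_2(\epsilon t, \tfrac{t}{\epsilon})\bigr)$ using the stated form $(u\Phi_1(u,v), v\Phi_2(u,v))$ of $\Phi|_U$. Since $\phi\circ\pi = \sigma\circ\Phi$ and here we may take $\sigma = \pi$ (the blowing-up realises both resolutions, as the counter-example uses a single blow-up; this identification is what the construction sets up), we have $\phi\circ c_\epsilon = \phi\circ\pi\circ\widetilde{c_\epsilon} = \pi\circ\Phi\circ\widetilde{c_\epsilon}$. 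So I just push the point $\Phi(\widetilde{c_\epsilon}(t))$ forward by the chart expression $\pi:(u,v)\mapsto(u, uv)$: the first coordinate is $u = \epsilon t\,\Phi_1(\epsilon t, \tfrac{t}{\epsilon})$, and the second is $uv = \epsilon t\,\Phi_1(\epsilon t,\tfrac t\epsilon)\cdot\tfrac t\epsilon\,\Phi_2(\epsilon t,\tfrac t\epsilon) = t^2\,\Phi_1(\epsilon t,\tfrac t\epsilon)\,\Phi_2(\epsilon t,\tfrac t\epsilon)$.

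This gives $\phi_*(c_\epsilon)(t) = \bigl(\epsilon t\,\Phi_1(\epsilon t,\tfrac t\epsilon),\ t^2\,\Phi_1(\epsilon t,\tfrac t\epsilon)\Phi_2(\epsilon t,\tfrac t\epsilon)\bigr)$, whereas the claimed formula is $\bigl(\epsilon t\,\Phi_1(\epsilon t,\tfrac t\epsilon),\ t^2\bigr)$. The discrepancy is exactly the factor $\Phi_1\Phi_2$ in the second coordinate, and this is reconciled by condition (ii'), i.e. $f\circ\pi\circ\Phi = f\circ\pi$: in the chart $f\circ\pi(u,v) = uv$, so $f\circ\pi\circ\Phi(u,v) = u\Phi_1(u,v)\cdot v\Phi_2(u,v) = uv\,\Phi_1(u,v)\Phi_2(u,v)$, and equality with $uv$ forces $\Phi_1\Phi_2 \equiv 1$ on $U$. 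Substituting this identity into the second coordinate yields precisely $t^2$, completing the proof.

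The only genuine subtlety — and the step I expect to need the most care — is the bookkeeping of which blowings-up realise $\phi$ on the target side and the legitimacy of writing $\phi\circ\pi = \pi\circ\Phi$ with the \emph{same} $\pi$; I would make explicit at the outset that in this construction $\sigma = \pi$, so there is no sleight of hand. Everything else is the routine chart computation above together with the clean use of condition (ii') to kill the extra factor. I would also note in passing that the computation only makes sense germwise (for $t$ small and $\epsilon$ small), which is all that is needed for the convergence statement (iii).
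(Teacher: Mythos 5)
Your proof is correct and follows essentially the same route as the paper's: lift $c_\epsilon$ in the chart $U$ to $(\epsilon t, t/\epsilon)$, apply $\Phi$, push forward by $\pi$ to get the extra factor $\Phi_1\Phi_2$ in the second coordinate, and use condition (ii') to deduce $\Phi_1\Phi_2\equiv 1$. Your explicit remark that $\sigma=\pi$ here is just a clarification of what the paper's setup already assumes (a single blowing-up with $\Phi$ a diffeomorphism of $M$ itself).
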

\begin{proof}
In the chart $U$ considered, we obtain for $\epsilon>0$: 
$$
\aligned
\pi^*(c_\epsilon)(t)=(\epsilon t,\frac{t}{\epsilon}),\ \ \Phi_*\pi^*(c_\epsilon)(t)=(\epsilon t\Phi_1(\epsilon t,\frac{t}{\epsilon}),\frac{t}{\epsilon}\Phi_2(\epsilon t,\frac{t}{\epsilon})),\qquad\ \ \\
\phi_*(c_\epsilon)(t)=\pi_*\Phi_*\pi^*(c_\epsilon)(t)=(\epsilon t\Phi_1(\epsilon t,\frac{t}{\epsilon}),t^2\Phi_1(\epsilon t,\frac{t}{\epsilon})\Phi_2(\epsilon t,\frac{t}{\epsilon})),\qquad\\
f\circ\pi(u,v)=u v. \qquad\qquad\qquad\qquad\qquad\qquad
\endaligned
$$
The last equality together with condition (ii') implies
$$u v\Phi_1(u,v)\Phi_2(u,v)=f\circ\pi\circ\Phi(u,v)=f\circ\pi(u,v)=u v$$
hence  $\Phi_1\Phi_2\equiv1$. In particular $\phi_*(c_\epsilon)(t)=(\epsilon t\Phi_1(\epsilon t,\frac{t}{\epsilon}),t^2)$. 
\end{proof}

The image $\phi_*(c_0)$ of $c_0$ belongs to $\mathfrak m$ because $\phi_*(c_0)(0)=\phi(0)=0$. 
As a consequence, to obtain a negative answer to Question \ref{Ques}, it suffices to find a global analytic diffeomorphism $\Phi$ such that $\epsilon t\Phi_1(\epsilon t,\frac{t}{\epsilon})$ does not converge to any element of $\mathfrak m$ as $\epsilon\to 0$ in $\mathbb R$. 

\begin{rmk}\begin{flushleft}\end{flushleft} 
\begin{enumerate}
\item Note that $\phi_*(c_0)$ does exist, even if we do not know how to describe it.
\item Note that the non-convergence of $\pi^*(c_\epsilon)$ does not imply necessarily the non-convergence of $\phi_*(c_\epsilon)$ whereas the non-convergence of $\epsilon t\Phi_1(\epsilon t,\frac{t}{\epsilon})$ does.
\end{enumerate}
\end{rmk}

Describe $\Phi_1$ around 0 as a convergent power series $\sum_{m,n\in\N}a_{m,n}u^m v^n$ with $a_{m,n} \in \mathbb R$. Then the first component of $\phi_*(c_\epsilon)(t)$ for $\epsilon>0$ is given by
$$\epsilon t\Phi_1(\epsilon t,\frac{t}{\epsilon})\newline=\sum_{m,n\in\N}a_{m,n}t^{m+n+1}\epsilon^{m-n+1}.$$

\begin{lemma} 
If there exist $m$ and $n$ in $\N$ such that $n>m+1$ and $a_{m,n}\not=0$, then $\epsilon t\Phi_1(\epsilon t,\frac{t}{\epsilon})$ does not converge to any element of $\mathfrak m$ as $\epsilon\to 0$. 
\end{lemma}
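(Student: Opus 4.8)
The plan is to show that the coefficient $a_{m,n}$ with $n > m+1$ forces the $\epsilon$-powers appearing in the series $\epsilon t \Phi_1(\epsilon t, \tfrac t\epsilon) = \sum_{m,n} a_{m,n} t^{m+n+1}\epsilon^{m-n+1}$ to become negative, so that individual terms blow up as $\epsilon \to 0$, and then to argue that this blow-up cannot be cancelled. First I would fix the data: let $N_0 = \min\{\, m+n+1 : a_{m,n}\neq 0,\ n>m+1 \,\}$, the lowest $t$-degree at which a ``bad'' coefficient contributes. For each fixed power $t^{k}$ of $t$, the coefficient of $t^{k}$ in $\epsilon t\Phi_1(\epsilon t,\tfrac t\epsilon)$ is the finite Laurent polynomial in $\epsilon$ given by $P_k(\epsilon) = \sum_{m+n+1=k} a_{m,n}\,\epsilon^{m-n+1}$; here $m,n$ range over finitely many pairs, so $P_k$ is a genuine Laurent polynomial, not a series. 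The key observation is that if $\epsilon t\Phi_1(\epsilon t,\tfrac t\epsilon)$ converged in the $t$-adic topology to some power series $\sum_k b_k t^k$ as $\epsilon \to 0$, then in particular for each fixed $k$ the scalar $P_k(\epsilon)$ would have to converge in $\mathbb R$ to $b_k$ as $\epsilon \to 0$; and for this to land in $\mathfrak m$ we would additionally need $b_0 = 0$, i.e.\ $P_0(\epsilon) \to 0$ (note $P_0$ is only nonzero if there is a pair with $m+n+1=0$, impossible for $m,n\in\mathbb N$, so that constraint is automatic — the relevant point is convergence of each $P_k$).

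Next I would exploit the degree $k = N_0$. By choice of $N_0$, every pair $(m,n)$ with $m+n+1 = N_0$ and $a_{m,n}\neq 0$ that has $n > m+1$ contributes a term $a_{m,n}\epsilon^{m-n+1}$ with exponent $m-n+1 < 0$; and there is at least one such pair. Among all pairs contributing to $P_{N_0}$, let $d$ be the most negative exponent $m-n+1$ that occurs with nonzero coefficient — this $d$ is strictly negative and is attained by a unique pair $(m,n)$, because $m+n$ is fixed ($=N_0-1$) and hence the value of $m-n$ determines $(m,n)$. Therefore $P_{N_0}(\epsilon) = a_{m,n}\epsilon^{d} + (\text{terms with strictly larger powers of }\epsilon)$, with $a_{m,n}\neq 0$ and $d<0$, so $|P_{N_0}(\epsilon)| \to +\infty$ as $\epsilon \to 0^{+}$. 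Hence the coefficient of $t^{N_0}$ does not converge in $\mathbb R$, so $\epsilon t\Phi_1(\epsilon t,\tfrac t\epsilon)$ cannot converge to any formal power series, a fortiori not to any element of $\mathfrak m$, in the $t$-adic topology.

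Finally I would record the small bookkeeping point that makes the argument legitimate: convergence of a net of power series in the $t$-adic topology implies, coefficientwise, convergence of each coefficient in $\mathbb R$ (since the $t$-adic topology is coarser than, or rather refines to, the product topology at the level of individual coefficients — an element close to $\sum b_k t^k$ in the $t$-adic sense agrees with it up to high order, in particular in the $t^{N_0}$-coefficient). The main obstacle, and the only place requiring care, is the uniqueness of the pair realizing the extremal exponent $d$ and the verification that nothing else in $P_{N_0}(\epsilon)$ can cancel the dominant term $a_{m,n}\epsilon^d$ as $\epsilon \to 0^{+}$; this is exactly the observation that fixing $m+n$ turns the set of exponents $\{m-n+1\}$ into a set of \emph{distinct} integers, so the dominant (most negative) one stands alone and governs the limit. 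Once that is in place the divergence is immediate, and the lemma follows.
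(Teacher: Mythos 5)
Your argument is correct and is essentially the paper's proof: both isolate the minimal $t$-degree $N_0=m_0+n_0+1$ at which a pair with $n>m+1$ contributes, use the fact that fixing $m+n$ makes the exponents $m-n+1$ pairwise distinct (so the most negative one is attained by a unique pair and cannot be cancelled), and conclude that the coefficient of $t^{N_0}$ blows up as $\epsilon\to 0$; your presentation via the finite Laurent polynomials $P_k(\epsilon)$ is just a cleaner bookkeeping of the paper's subtraction of the terms with $n\le m+1$ and its bound on the error $\delta_{m_0+n_0+1}$. One small correction: the topology relevant to this subsection is the product (coefficientwise) topology rather than the $t$-adic one you invoke, but since your criterion is precisely the divergence of a single coefficient $P_{N_0}(\epsilon)$, the argument applies verbatim to the intended statement.
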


\begin{proof}
Assume that there exist such pairs $(m,n)$ with $n>m+1$ and $a_{m,n}\not=0$. 
Let $L$ denote the family of all such pairs. Denote by $L_1$ the subfamily of $L$ consisting of $(m,n)$ such that the sum $m+n$ is the smallest possible in $L$ and by $L_2$ the subfamily of $L_1$ of $(m,n)$ such that $n-m$ is the largest possible in $L_1$. 
Then $L_1$ is non-empty and finite, and $L_2$ is non-empty and consists of a unique element, say $(m_0,n_0)$. Then $\epsilon t\Phi_1(\epsilon t,\frac{t}{\epsilon})$ satisfies 
$$\epsilon t\Phi_1(\epsilon t,\frac{t}{\epsilon})-\sum_{n\leq m+1}a_{m,n}t^{m+n+1}\epsilon^{m-n+1}$$
$$=a_{m_0,n_0}t^{m_0+n_0+1}(\epsilon^{m_0-n_0+1}+\delta_{m_0+n_0+1}(\epsilon))+\sum_{k=m_0+n_0+2}^\infty\delta_k(\epsilon)t^k$$ 
where the functions $\delta_k$ are functions in the variable $\epsilon$, and $\delta_{m_0+n_0+1}$ satisfies 
$$|\delta_{m_0+n_0+1}(\epsilon)|\le c|\epsilon||\epsilon^{m_0-n_0+1}|$$ 
for some $c>0\in\mathbb R$ and $\epsilon\not=0$ near 0. 
Therefore the coefficient of $t^{m_0+n_0+1}$ does not converge as $\epsilon\to 0$, so that $\epsilon t\Phi_1(\epsilon t,\frac{t}{\epsilon})$ does not converge to any element of $\mathfrak m$ as $\epsilon\to 0$.
\end{proof}

For simplicity, we strengthen the condition 
$$a_{m,n}\not=0 \textrm{~~for some~~} n>m+1$$ 
to the condition
$$a_{0,n}\not=0 \textrm{~~ for some~~} n>1$$ 
In particular in that case 
$$\Phi_1(0,v)=a_{0,0}+a_{0,1}v+\sum_{n\in\N}a_{0,n}v^n.$$ 
To finish the proof of the negative answer to Question \ref{Ques}, we are going to give an explicit example with $\Phi_1$ satisfying $a_{0,n}\not=0$ for some $n>1$.
Let $\phi: \R^2 \to \R^2$ be defined by
$$\phi(x,y)=(xP(x,y)^{1/4},y)$$
if $(x,y)\neq (0,0)$ and $\phi(0,0)=(0,0)$, where
$$P(x,y)=1+\frac{y^2}{x^2+y^2}.$$ 
Remark that $\phi$ is continuous on $\R^2$ since $0\leq P\leq 2$, and analytic in restriction to $\R^2\setminus\{(0,0)\}$. Note moreover that $\phi$ fix the $x$-axis together with the $y$-axis.

Let us prove first that $\phi$ is a homeomorphism. It suffices to prove that for $y\neq 0$, the one variable function $h(x)=xP(x,y)^{1/4}$ is strictly increasing. Its derivative is given by 
$$h'(x)=P(x,y)^{1/4}(1-\frac{x^2}{2(x^2+y^2)(x^2+2y^2)})$$
and $$\frac{x^2}{2(x^2+y^2)(x^2+2y^2)}\leq \frac{(x/y)^2}{2((x/y)^2+1)((x/y)^2+2)}\leq \frac{1}{4}$$
so that $h'>0$.

We are going to prove that $\phi$ lift to an analytic diffeomorphism $\Phi:M\to M$. Recall that $U$ denotes the chart on $M$ such that $\pi_{|U}$ is given by $\pi_{|U}(u,v)=(u,uv)$ and denote by $V$ the chart such that $\pi_{|V}(u,v)=(uv,v)$. Then $\phi$ lifts to a map $\Phi:M \to M$ such that $\Phi(U)\subset U$ and $\Phi(V)\subset V$, and more precisely the restriction $\Phi_U: U\to U$ of $\Phi$ to $U$ is given by
$$\Phi_U(u,v)=(uP(1,v)^{1/4}, vP(1,v)^{-1/4})$$
whereas $\Phi_V: V\to V$ is equal to
 $$\Phi_V(u,v)=(uP(u,1)^{1/4},v).$$
In particular $\Phi$ is analytic. Moreover $\Phi_{|M_1}$ is a bijection onto $M_1$ since in the chart $U$ for example, where $M_1$ is defined by $u=0$, we have
$$\Phi_U(0,v)=(0,vP(1,v)^{-1/4}).$$
Finally the inverse of $\Phi$ is also analytic by the Jacobian criterion.

As a consequence $\phi$ is a blow-analytic homeomorphism of $\R^2$ (cf. \cite{FKuoP} for close examples), the map $\Phi$ is an analytic diffeomorphism of $M$ satisfying conditions $(i)$ and $(ii)$, and moreover 
$$\Phi_1(0,v)=P(1,v)^{1/4}$$ 
is of the required form so that $(iii)$ is also satisfied.


\section{Set of germs of definable functions}\label{top}
In order to deal with a generalisation of Theorem \ref{main} in the context of o-minimal structures in section \ref{co}, we introduce the real closed field of definable germs of arcs over a given real closed field in section \ref{extension}. We begin with some basic facts about real closed fields and recall
the unique Euclidean topology on a real closed field, following \cite{BCR}.

\subsection{Real closed fields}
An ordered field is a field $F$ equipped with an ordering, namely
there exists a total order relation $\leq$ on $F$ satisfying $x+z\leq
y+z$ if $x\leq y$ and the product of positive elements is
positive. Fields such as $\mathbb Q$ and $\mathbb R$ are ordered fields. 

\begin{ex} There exist several orderings on the field $\mathbb
R(t)$ of real rational fractions. For example, we may choose that $a_nt^n+a_{n+1}t^{n+1}+\cdots+a_mt^m$, with $a_n\neq 0$ is positive if and only if $a_n>0$. In particular $t$ is positive and smaller than any positive real number. The situation is similar with the field of formal power series $\mathbb R((t))$.
\end{ex}

A real field is a field that can be ordered. A real closed field is a
real field that 
does not admit any non trivial real extension. A real
closed field $\R$ is characterised by the fact that $\R$ admits a unique ordering such that the positive elements
coincide with the squares and every polynomial in $\R[X]$ of
odd degree has a root in $\R$. Every ordered field admits a real
closure, unique up to a unique isomorphism. Of course $\mathbb R$ is
real closed. The real closure of $\mathbb Q$ consists of the field of
real algebraic numbers $\mathbb R_{alg}$.

\begin{ex} The field of (formal) real Puiseux series $\Pui=\cup_{q\in \mathbb N^*}\mathbb R((t^{1/q}))$, namely the set of formal expressions
$$\sum_{p=n}^{+\infty} a_pt^{p/q} \textrm{ with } n\in \mathbb Z, ~q\in \mathbb N^*, ~ a_p \in \mathbb R,$$
is real closed. A positive Puiseux series is a series of the form $\sum_{p=n}^{+\infty} a_pt^{p/q}$ with $a_n>0$.  
  
The real closure of the field of rational functions $\mathbb
R(t)$ is the field of algebraic Puiseux series $\Pui_{alg}$, i.e. Puiseux series
algebraic over $\mathbb R(t)$. The field of convergent Puiseux series $\mathbb R \{t^{1/\infty}\}$ is also a real closed field. The following inclusions hold:
$$\mathbb R_{alg}\subset \mathbb R \subset \Pui_{alg}  \subset
\mathbb R \{t^{1/\infty}\} \subset
\Pui $$
\end{ex}

\subsection{Topology}
A real closed field $\R$ induces an Euclidean topology on $\R^n$ as
follows: for $x\in \R^n$, defined
$|x|=\sqrt{x_1^2+\cdots+x_n^2}$. Then the open balls
$$B(x,r)=\{y\in \R^n: |y-x|<r \}$$
for $x\in \R^n$ and $r\in \R$ with $r>0$, form a basis of open
subsets. The Euclidean topology is the unique topology compatible with the real
structure, in the sense that a real closed field admits a unique
ordering. It is called the Euclidean topology of the real closed field.

\begin{ex}\label{ex}\begin{flushleft}\end{flushleft}
\begin{enumerate}
\item The subset of $\Pui_{alg}$
  defined by $$\{\sum_{k\in \mathbb
    N} a_kt^{k/p}:~~ a_k\in \mathbb R,~~p\in \mathbb N^*\}$$ is open. Actually it is an infinite union of open intervals:
$$\cup_{a\in (0;+\infty)\cap \mathbb R} (-a;a).$$
\item As a real closed field, $\Pui_{alg}$ inherits to an Euclidean topology. An element $\gamma$ in $\Pui_{alg}$ of the form $\gamma (t)=\sum_{p\geq m}a_p t^{p/q}$, with $a_p \in \mathbb R$, $q\in \mathbb N^*$, and $a_m \neq 0$ is small in this topology if and only if $m$ is large.
\end{enumerate}
\end{ex}

Note that the Euclidean topology of a real closed field may behave
very differently from that of real numbers: a real closed field is not necessarily connected, and
$[0,1]$ is not compact in $\mathbb R_{alg}$ nor in $\Pui$.

\begin{rmk} On the field of (convergent) power series $\mathbb R\{t\}$ one
  can consider several topologies such as the product topology or
  $t$-adic topology studied in section \ref{sect-bah}. Note that the $t$-adic topology on $\mathbb R\{t\}$ coincides with the restriction to $\mathbb R\{t\}$ of the Euclidean
  topology of the field of convergent Puiseux series $\mathbb R
  \{t^{1/\infty}\}$.
\end{rmk}
\subsection{o-minimal structures expanding $\R$}\label{extension}
We introduce an o-minimal structure expanding a real closed field following \cite{Coste}.

Let $\R$ be a real closed field. 
An o-minimal structure expanding $\R$ is a collection $\mathcal S=(\mathcal S^n)_{n \in \mathbb N}$, where each $\mathcal S^n$ is a set of subsets of the affine space $\R^n$, satisfying the following axioms:
\begin{enumerate}
\item All algebraic subsets of $\R^n$ are in $\mathcal S^n$.
\item For every $n$, $\mathcal S^n$ is a Boolean subalgebra of subsets of $\R^n$.
\item If $A\in \mathcal S^m$ and $B \in \mathcal S^n$, then $A\times B \in \mathcal S^{n+m}$.
\item If $p:\R^{n+1} \to \R^n$ is the projection on the first $n$ coordinates and $A\in \mathcal S^{n+1}$, then $p(A) \in \mathcal S^n$.
\item An element of $\mathcal S^1$ is a finite union of points and open intervals $(a;b)=\{x\in \R : a<x<b \}$, with $a,b \in \R \cup \{\pm \infty \}$.
\end{enumerate}

A set belonging to the collection $\mathcal S$ is called a definable set.

\begin{ex}\begin{flushleft}\end{flushleft}
\begin{enumerate}
\item Let $\R$ be a real closed field. The most simplest o-minimal structure expanding $\R$ is the structure whose definable sets are the semialgebraic sets.
\item A globally subanalytic subset $A$ of $\mathbb R^n$ is a subanalytic subset $A$ of $\mathbb R^n$ which is subanalytic at infinity. Namely, if we embed $\mathbb R^n$ in $\mathbb S^n$ (via some rational regular embedding) and consider $\mathbb S^n$ in $\mathbb R^{n+1}$, then we ask that $A\subset \mathbb R^n \subset \mathbb S^n \subset \mathbb R^{n+1}$ is a subanalytic subset of $\mathbb R^{n+1}$.
The collection of globally subanalytic sets forms an o-minimal structure expanding $\mathbb R$.
\end{enumerate}
\end{ex}

A definable function is a function defined on a definable set whose graph is definable. We will use several times the curve selection lemma in o-minimal structures in the sequel.

\begin{thm}[Curve selection lemma]\label{csl} Let $\R$ be a real
  closed field and let be given an o-minimal structure expanding $\R$. Let $A$ be a definable subset of $\R^n$ and $x\in
  \overline A$. There exists a continuous definable mapping
  $\gamma:[0;1)\to \R^n$ such that $\gamma(0)=x$ and $\gamma
  ((0;1))\subset A$.
\end{thm}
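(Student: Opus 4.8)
The plan is to reduce to the case $x=0$ and $x\notin A$ — translation by $x$ is a semialgebraic, hence definable, map, and if $x\in A$ the constant curve $\gamma\equiv x$ already satisfies the conclusion — and then to read off the curve from the distance function $\rho\colon A\to\R$, $\rho(a)=|a|$. This $\rho$ is definable and continuous, takes only positive values since $0\notin A$, and satisfies $\inf_{a\in A}\rho(a)=0$ because $0\in\overline A$. Thus $\rho(A)\subset(0,+\infty)$ is a definable subset of $\R$ whose infimum is $0$ but is not attained; being a finite union of points and open intervals by axiom (5), it must contain an interval with left endpoint $0$, say $(0,b)\subset\rho(A)$ with $b>0$.

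Next I would invoke definable choice — a standard consequence of cell decomposition, valid over an arbitrary real closed field, see e.g. \cite{Coste} — applied to the definable set $W=\{(r,a)\in(0,b)\times\R^n : a\in A,\ |a|=r\}$, whose image under the first projection is all of $(0,b)$. This yields a definable map $\alpha\colon(0,b)\to\R^n$ with $\alpha(r)\in A$ and $|\alpha(r)|=r$ for every $r\in(0,b)$, a curve in $A$ parametrised by its distance to the origin. It remains only to make $\alpha$ continuous, in particular at the endpoint $r=0$.

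For the regularisation I would apply the monotonicity theorem to each coordinate $\alpha_i$ and take a common refinement of the resulting partitions: there is $b'\in(0,b]$ such that on $(0,b')$ every $\alpha_i$ is continuous and monotone (or constant), hence $\lim_{r\to0^+}\alpha_i(r)$ exists in $\R\cup\{\pm\infty\}$. Since $\sum_i\alpha_i(r)^2=|\alpha(r)|^2=r^2\to0$ as $r\to0^+$, each of these limits is forced to be $0$, so $\alpha(r)\to 0$. Setting $\gamma_0(0)=0$ and $\gamma_0(r)=\alpha(r)$ on $(0,b')$ gives a continuous definable map $\gamma_0\colon[0,b')\to\R^n$ with $\gamma_0(0)=x$ and $\gamma_0((0,b'))\subset A$; composing with the affine reparametrisation $s\mapsto b's$ of $[0,1)$ onto $[0,b')$ produces the desired $\gamma$.

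The point where care is needed is precisely the continuity at $0$: one must ensure the chosen curve limits to $x$ itself and not to some other point of the frontier $\overline A\setminus A$. Parametrising by the distance $|a|=r$ makes this automatic via the elementary inequality above, which is why I prefer it to the more traditional route of inducting on $n$ through cell decomposition: there one projects $A$ off a coordinate and tries to choose a lower-dimensional curve whose image ``aims'' correctly at $x$, and the graph case — where the whole fibre over the projected point may lie in the frontier — is exactly where that approach needs extra work. With the distance parametrisation the only genuine o-minimal ingredients are the structure of definable subsets of $\R$ (axiom (5)), definable choice, and the monotonicity theorem.
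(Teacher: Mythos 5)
The paper does not prove this statement: it is quoted as a standard fact of o-minimal geometry (it is Corollary 1.5 of Chapter 6 in van den Dries's book, cited as \cite{vdd}), so there is no in-paper argument to compare against. Your proof is correct and is essentially the standard one: reduce to $x=0\notin A$, use definable choice on $\{(r,a): a\in A,\ |a|=r\}$ to parametrise by distance to $x$, and regularise by the monotonicity theorem. All the ingredients you invoke (definable choice, monotonicity, the description of definable subsets of $\R$) are valid over an arbitrary real closed field, which matters here since the paper applies the lemma to $\wR$. Two minor remarks. First, the continuity of $\gamma_0$ at $0$ does not even require the monotonicity theorem: $|\alpha(r)|=r$ gives $|\gamma_0(r)-0|=r$ directly, so the only role of monotonicity is to secure continuity on a punctured interval $(0;b')$. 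Second, your observation that parametrising by $|a|$ sidesteps the delicate point of the cell-decomposition induction (making the curve limit to $x$ rather than to another frontier point) is exactly right, and is why this is the argument of choice in the o-minimal literature.
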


We will also need to reparametrize definable arcs (\cite{vdd}, Exercise (1.9) p 49).

\begin{lemma}\label{repara} Let $\gamma : (0;\epsilon) \to \R$ be a non-constant 
continuous definable function. There 
exist numbers $\epsilon_1$ and $\epsilon_2$ in $\R$ with $\epsilon_1<\epsilon_2$
and a continuous definable bijection $\delta : 
(\epsilon_1;\epsilon_2) \to (0;\epsilon_3)$, with $0<\epsilon_3 < \epsilon$ such that $\gamma \circ \delta (t)=t$ for any $t\in (\epsilon_1;\epsilon_2)$.
\end{lemma}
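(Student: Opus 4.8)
The plan is to use the o-minimal structure directly: the claim is essentially that a non-constant continuous definable function of one variable is locally a homeomorphism onto an interval near a suitable point, and that the local inverse is then the desired reparametrization. First I would apply the monotonicity theorem (the one-variable case of cell decomposition, available in any o-minimal structure expanding $\R$) to $\gamma:(0;\epsilon)\to\R$: there is a finite partition of $(0;\epsilon)$ into points and open subintervals on each of which $\gamma$ is either constant or strictly monotone and continuous. Since $\gamma$ is non-constant, at least one of these subintervals $(\epsilon_1;\epsilon_2)$ is one on which $\gamma$ is strictly monotone (and continuous); fix such an interval.

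On $(\epsilon_1;\epsilon_2)$ the restriction $\gamma$ is a strictly monotone continuous definable injection, hence (by the intermediate value property for continuous definable functions on an interval, which holds in any o-minimal expansion of a real closed field since the image of a definable connected set is definable and connected) its image is an open interval $J$, and $\gamma:(\epsilon_1;\epsilon_2)\to J$ is a bijection with definable inverse $\gamma^{-1}:J\to(\epsilon_1;\epsilon_2)$, continuous because the inverse of a monotone continuous bijection between intervals is again monotone and continuous. Now I want to arrange that $J$ has the form $(0;\epsilon_3)$ with $0<\epsilon_3<\epsilon$. After possibly shrinking $(\epsilon_1;\epsilon_2)$ to a smaller subinterval $(\epsilon_1';\epsilon_2')$ adjacent to one of its endpoints, $J$ shrinks to a subinterval of arbitrarily small length; and by composing $\gamma$ with an affine change of the source variable (a definable homeomorphism of one interval onto another) I may assume the image is of the form $(0;\epsilon_3)$ for some $\epsilon_3$ as small as desired, in particular $0<\epsilon_3<\epsilon$. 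Setting $\delta:=\gamma^{-1}:(\epsilon_1;\epsilon_2)\to(0;\epsilon_3)$ (after renaming the shrunk/reparametrized interval back to $(\epsilon_1;\epsilon_2)$) gives a continuous definable bijection with $\gamma\circ\delta(t)=t$ for all $t\in(\epsilon_1;\epsilon_2)$.

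The only genuine subtlety — and the step I expect to be the main obstacle to state cleanly rather than hard in substance — is bookkeeping the endpoints so that the target interval is exactly of the form $(0;\epsilon_3)$ and not $(a;b)$ with $a\ne 0$: one must be slightly careful that the affine reparametrization of the source interval is itself definable (it is: affine maps are semialgebraic, hence in every o-minimal structure) and that shrinking near an endpoint of $(\epsilon_1;\epsilon_2)$ does drive the length of the image to $0$, which follows from continuity of $\gamma$ at that endpoint within $(\epsilon_1;\epsilon_2)$. Everything else — monotonicity, the intermediate value property, continuity of the inverse of a monotone bijection of intervals — is part of the standard one-variable toolkit for o-minimal structures (see \cite{vdd}), so the proof reduces to invoking the monotonicity theorem and then performing this elementary endpoint adjustment.
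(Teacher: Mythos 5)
The paper gives no proof of this lemma (it is quoted from \cite{vdd} as an exercise), so your attempt has to stand on its own, and as written it proves a different, weaker statement. Read the conclusion carefully: $\delta$ must map $(\epsilon_1;\epsilon_2)$ \emph{onto} $(0;\epsilon_3)$ with $\gamma\circ\delta(t)=t$ on $(\epsilon_1;\epsilon_2)$; this forces $\gamma$ to restrict to a homeomorphism of $(0;\epsilon_3)$ onto $(\epsilon_1;\epsilon_2)$, with $\delta$ its inverse. So the interval in the \emph{source} of $\gamma$ that must be anchored at $0$ is $(0;\epsilon_3)$, while $(\epsilon_1;\epsilon_2)$ is just the image $\gamma((0;\epsilon_3))$. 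This anchoring is the whole point of the lemma: in its applications (e.g.\ the proof that $\widetilde{p(C)}=\widetilde p(\widetilde C)$) one reparametrizes a curve germ at $0^+$, so $\delta$ must sweep out a full interval $(0;\epsilon_3)$ adjacent to $0$ in the original parameter. You instead pick an \emph{arbitrary} strictly monotone piece of the partition furnished by the monotonicity theorem --- possibly bounded away from $0$ --- invert $\gamma$ there, and then try to force the image to become $(0;\epsilon_3)$ by an affine substitution. That step cannot be repaired: precomposing with an affine map of the source does not change the image at all, postcomposing with an affine map of the target replaces the identity $\gamma\circ\delta=\mathrm{id}$ by an affine map, and no substitution whatsoever can produce $\gamma(\delta(t))=t$ with $\delta$ valued in $(0;\epsilon_3)$ unless $\gamma$ is already injective on $(0;\epsilon_3)$ --- which your choice of monotone piece does not guarantee. (Your final formula ``$\delta:=\gamma^{-1}:(\epsilon_1;\epsilon_2)\to(0;\epsilon_3)$'' also reverses source and target of the inverse of the map $\gamma:(\epsilon_1;\epsilon_2)\to J$ you constructed.)

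The correct argument is a small variant of yours. Apply the monotonicity theorem and look at the \emph{first} subinterval $(0;a_1)$ of the partition. For the statement to hold one must read ``non-constant'' as a germ condition at $0^+$ (non-constant on every $(0;s)$), which is how the lemma is used: there $\gamma=\delta_1$ tends to $0$ while remaining positive, hence is non-constant near $0$ and, moreover, bounded. Then $\gamma$ is strictly monotone and continuous on $(0;a_1)$, and for $\epsilon_3<a_1$ it maps $(0;\epsilon_3)$ homeomorphically onto the interval with endpoints $\lim_{t\to0^+}\gamma(t)$ (which exists in $\R\cup\{\pm\infty\}$ by o-minimality and is finite in the bounded case) and $\gamma(\epsilon_3)$; the inverse of this restriction is the required $\delta$. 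Your remaining ingredients --- the monotonicity theorem, the intermediate value property, continuity of the inverse of a monotone bijection of intervals --- are exactly the right ones; the gap lies entirely in which interval gets anchored at $0$, and the affine adjustment you propose to compensate for the wrong choice does not work.
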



\subsection{Germs of definable functions}\label{defin}
Given a real closed field and an o-minimal structure expanding it, we introduce its field of germs of continuous definable curves at the origin in section \ref{defin}. We derive an o-minimal structure expanding it and study in section \ref{trans} the transport of some properties from the initial o-minimal structure to the new one.

Let $\R$ be a real closed field. We fix an o-minimal structure 
over $\R$ expanding $\R$. 
Let $\wR$ be the set of germs at
$0\in \R$ of continuous definable functions from $(0;\infty)$ to
$\R$. 

\begin{lemma} 
The set $\wR$ is a real
  closed field.
\end{lemma}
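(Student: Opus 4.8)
The plan is to verify the field axioms together with the real-closedness criterion recalled above: namely that $\wR$ admits a total order for which the positive elements are exactly the squares and every odd-degree polynomial has a root. The first observation is that $\wR$ is a commutative ring under pointwise addition and multiplication of germs; one only has to be careful about the common domain, but two germs are represented by functions defined on a common interval $(0;\epsilon)$, and pointwise operations of definable functions are definable. The zero germ is the germ of $0$, the unit is the germ of $1$, and the ring axioms are inherited from $\R$.

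The main point is to show every nonzero germ is invertible. If $\gamma \in \wR$ is not the zero germ, then $\gamma$ is represented by a continuous definable $\gamma : (0;\epsilon) \to \R$ which is not identically $0$ near $0$. Here I would invoke o-minimality (the monotonicity theorem, a consequence of the axioms, or directly the structure of definable subsets of $\R$): the zero set $\gamma^{-1}(0)$ is a finite union of points and intervals, so either $\gamma$ vanishes on a whole interval $(0;\epsilon')$ — in which case its germ is $0$, contradiction — or $\gamma$ is nonzero on some $(0;\epsilon')$. On that interval $1/\gamma$ is continuous and definable (its graph is the image of the graph of $\gamma$ under the definable map $(x,y)\mapsto(x,1/y)$), and its germ is the inverse of $\gamma$ in $\wR$. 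Hence $\wR$ is a field.

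Next I would order $\wR$. Using again the finiteness of $\gamma^{-1}(0)$ and the intermediate value property of continuous definable functions on $\R$, a nonzero germ $\gamma$ has constant sign on some $(0;\epsilon')$; declare $\gamma > 0$ if that sign is $+$. One checks this is well defined on germs, total, compatible with addition and with multiplication of positive elements — all reducing to the corresponding facts in $\R$ on a small enough common interval. Moreover a positive germ $\gamma$ is a square: on $(0;\epsilon')$ where $\gamma>0$, the function $\sqrt{\gamma}$ is continuous and definable (graph obtained from that of $\gamma$ by the definable operation $y\mapsto\sqrt y$ on $\R$, using that $\R$ is real closed), and its germ squares to $\gamma$. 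Conversely a square is clearly $\geq 0$. This identifies the positive cone with the squares. Finally, for odd-degree polynomials: given $P(X)=X^{d}+c_{d-1}X^{d-1}+\cdots+c_0 \in \wR[X]$ with $d$ odd, choose a common interval $(0;\epsilon')$ on which all coefficients $c_i$ are represented by continuous definable functions; for each fixed $s\in(0;\epsilon')$ the polynomial over $\R$ has a root, and by definable choice (o-minimal definable selection, a consequence of the projection axiom) one can select such a root $r(s)$ depending definably on $s$; after shrinking the interval so that $r$ is continuous (monotonicity theorem), the germ of $r$ is a root of $P$ in $\wR$. This gives all the hypotheses of the real-closedness criterion, so $\wR$ is real closed.

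\textbf{Main obstacle.} The delicate step is the construction of a \emph{continuous definable} root $r(s)$ of $P$, and more generally ensuring at each stage that the pointwise-in-$s$ definable object can be chosen continuous near $0$: one needs definable choice to get a definable selection of roots, and then the monotonicity (or cell decomposition) theorem to pass to a subinterval on which the selection is continuous so that it actually defines a germ in $\wR$. The same care — restricting to a common small interval and shrinking it finitely many times — is what makes the field and order axioms go through, and it is worth checking that all of this uses only the o-minimality axioms listed, since $\R$ here is an arbitrary real closed field, not $\mathbb R$.
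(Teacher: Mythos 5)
Your proof is correct, but it follows a genuinely different route from the paper's. The paper proves real closedness by showing that $\wR[i]=\wR[Y]/(Y^2+1)$ is algebraically closed: after checking that $(Y^2+1)$ is prime, it takes an arbitrary $P\in\wR[i][X]$, observes that the definable set $\{(t,x): P(t,x)=0\}$ maps finite-to-one onto $(0;\epsilon)$, and invokes the o-minimal Hardt triviality theorem to produce a continuous definable section near $0$, i.e.\ a root in $\wR[i]$. You instead verify the Artin--Schreier-type criterion quoted in the paper (an ordering whose positive cone is the set of squares, plus roots of odd-degree polynomials), constructing the order from the eventual sign of a germ, extracting square roots via the definable map $y\mapsto\sqrt{y}$ over the real closed base field, and producing roots of odd-degree polynomials by a definable selection (e.g.\ the least root, which is first-order definable) made continuous on a subinterval by the monotonicity theorem. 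Your approach is more elementary in its toolbox --- definable choice over $\R^1$ and monotonicity rather than Hardt triviality --- and it makes the ordering on $\wR$ explicit, which the paper only records afterwards in a remark; the paper's approach is more uniform, treating all polynomials (not just odd-degree ones) in one stroke and avoiding any case analysis on signs. The field part of your argument is essentially identical to the paper's. The only point worth tightening is your parenthetical claim that definable choice is ``a consequence of the projection axiom'': in general it also uses the ordered field structure, though for finite subsets of $\R$ the minimum gives a definable selection directly, so nothing is actually missing.
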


\begin{proof} 
The set $\wR$ is a field. Actually a non zero definable function on a
neighbourhood of $0$ in $(0;\infty)$ nowhere
vanishes on a sufficiently small neighbourhood of 0, so its inverse is well defined and definable.

To prove that $\wR$ is a real closed field, it is sufficient to prove that $\wR[i]=\frac{\wR[Y]}{(Y^2+1)}$ is an algebraically closed field. 
First we prove that the ring $\frac{\wR[Y]}{(Y^2+1)}$ is a field. 
For that it suffices to show that $(Y^2+1)$ is a prime ideal. 
Otherwise, $Y^2+1$ is the product of two polynomial functions because $\wR[Y]$ is a unique factorization ring and hence $Y^2+1$ is of the form $(Y-f_1)\times(Y-f_2)$ and $Y^2-f_1^2$ for some $f_1$ and $f_2$ in $\wR$, which is impossible. 
For $P(t,X)\in \wR[i][X]$, write
$$P(t,X)=\sum_{j=0}^df_j(t)X^j$$
with $f_j\in \wR[i]$. Choose $\epsilon >0$ such that every $f_j$, for $j=0,\ldots,d$, is defined on $(0;\epsilon)$. For $t\in (0;\epsilon)$ fixed, the polynomial $P(t,X)\in \R[i][X]$ admits roots in $\R[i]$ and the map
$$\{(t,x_1,\ldots,x_d)\in (0;\epsilon)\times \R[i]^d : P(t,x)=0\} \to  (0;\epsilon)$$
is finite-to-one. By the o-minimal Hardt triviality Theorem (which is proved in the same way as the semialgebraic case, cf. \cite{BCR}, Theorem 9.3.2), this mapping admits a definable continuous section on a neighbourhood of $0$ in $(0;\epsilon)$. It furnishes a root of $P$ in $\wR[i]$.
\end{proof}

\begin{rmk}\begin{flushleft}\end{flushleft}
\begin{enumerate}
\item Note that $\wR$ is even a field extension of $\R$ by assigning to a number
$x\in \R$ the constant function $\widetilde x$ with image $x$.
\item  A positive element in $\wR$ is a square; in particular a definable continuous function germ $\gamma : (0;\infty) \to \R$ at $0$ is positive in $\wR$ if and only if there exists $\epsilon >0$ in $\R$ such that $\gamma (t) \geq 0$ for $t \in (0;\epsilon)$.
\end{enumerate}
\end{rmk}

\begin{ex}\begin{flushleft}\end{flushleft}
\begin{enumerate}
\item Note that the field of germs of continuous semialgebraic functions defined on intervals of the form $(0;\epsilon)\subset \mathbb R$ is isomorphic (cf. \cite{BCR}) to the field of algebraic Puiseux series $\Pui_{alg}$. The subring of algebraic formal power series $\mathbb R[[t]]_{alg}$ corresponds to the germs at the origin of analytic semialgebraic functions defined on intervals $[0;\epsilon)$.

In particular, if $\R=\mathbb R$ and the o-minimal structure is defined by the semialgebraic sets, then $\wR$ is the field of algebraic
  Puiseux series $\Pui_{alg}$.

Note that the same holds true for any real closed field $\R$ in place of $\mathbb R$.
\item If $\R=\mathbb R$ and the o-minimal structure is given by the globally subanalytic sets, then $\wR$ is the field of convergent Puiseux series $\mathbb R \{t^{1/\infty}\}$.
\end{enumerate}
\end{ex}

For a definable set $X\subset \R^n$, for $n \in \mathbb N$, denote by $\widetilde {X}$ the set of germs at
$0\in \R$ of continuous definable functions from $(0;\infty)$ to
$X$. For $x \in X$, we denote again $\widetilde x\in \widetilde X$ the germ of the constant
function equal to $x$.

\begin{lemma} Let $f:\R \to \R$ be definable. Let $\widetilde f:\wR
  \to \wR$ be defined by $\widetilde f (\gamma (t))=f\circ \gamma
  (t)$ for $\gamma \in \wR$. Then 
$$\graph \widetilde f=\widetilde
  {\graph f}.$$
\end{lemma}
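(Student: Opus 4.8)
The plan is to prove the set equality $\graph \widetilde f = \widetilde{\graph f}$ by double inclusion, working at the level of germs of definable curves. Recall that $\graph f \subset \R^2$ is a definable set, so $\widetilde{\graph f}$ is the set of germs at $0$ of continuous definable maps $(0;\infty) \to \graph f$, while $\graph \widetilde f \subset \wR^2$ consists of pairs $(\gamma, \widetilde f(\gamma))$ for $\gamma \in \wR$, where $\widetilde f(\gamma)$ is the germ $t \mapsto f(\gamma(t))$.

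First I would check that $\widetilde f$ is well defined. Given a germ $\gamma \in \wR$, represented by a continuous definable function $\gamma : (0;\epsilon) \to \R$, the composite $f \circ \gamma : (0;\epsilon) \to \R$ is definable (composition of definable maps), and it is continuous on a sufficiently small subinterval $(0;\epsilon')$: indeed, $f$ is definable on $\R$, hence continuous outside a finite set of points by o-minimality (cell decomposition in dimension one), and by the curve selection / monotonicity properties of $\gamma$ the image $\gamma((0;\epsilon'))$ avoids that finite set for $\epsilon'$ small, unless $\gamma$ is eventually constant equal to such a point — in which case $f\circ\gamma$ is eventually constant and thus continuous. So $f\circ\gamma$ defines an element of $\wR$, and $\widetilde f$ is a well-defined map $\wR \to \wR$.

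For the inclusion $\graph \widetilde f \subset \widetilde{\graph f}$: take $\gamma \in \wR$ and consider the pair $(\gamma, \widetilde f(\gamma))$. Choosing $\epsilon$ so that both $\gamma$ and $f\circ\gamma$ are represented by continuous definable functions on $(0;\epsilon)$, the map $t \mapsto (\gamma(t), f(\gamma(t)))$ is a continuous definable map $(0;\epsilon) \to \R^2$ whose image lies in $\graph f$ by construction, so its germ is an element of $\widetilde{\graph f}$, and it is exactly the pair $(\gamma, \widetilde f(\gamma))$ under the identification $\widetilde{(\R^2)} = \wR^2$. Conversely, for $\widetilde{\graph f} \subset \graph \widetilde f$: a germ in $\widetilde{\graph f}$ is represented by a continuous definable map $c = (c_1, c_2): (0;\epsilon) \to \graph f$, meaning $c_2(t) = f(c_1(t))$ for all $t \in (0;\epsilon)$. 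Then $c_1 \in \wR$, and passing to germs the identity $c_2(t) = f(c_1(t))$ says precisely that $c_2 = \widetilde f(c_1)$, so the germ of $c$ is the point $(c_1, \widetilde f(c_1)) \in \graph \widetilde f$. This closes both inclusions.

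The only genuinely delicate point — the one I would expect to be the main obstacle — is the well-definedness step: one must be careful that composing a germ of a continuous definable function with the globally defined definable function $f$ again yields a continuous germ, since $f$ need not be continuous everywhere. This is handled by o-minimality of $\R$ (a one-variable definable function is piecewise continuous with finitely many pieces), combined with the fact that a definable germ $\gamma$ is either eventually constant or eventually injective with limit a well-defined point of $\R \cup \{\pm\infty\}$, so for small enough $\epsilon$ the curve $\gamma|_{(0;\epsilon)}$ lands in a single continuity cell of $f$ (or is constant). Everything else is a direct unwinding of the definitions of $\widetilde{(\cdot)}$ applied to $\R$, $\R^2$, and $\graph f$, together with the canonical identification $\widetilde{(\R^2)} \cong \wR \times \wR$.
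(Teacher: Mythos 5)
Your proof is correct and follows essentially the same route as the paper's: identify $\widetilde{\R^2}$ with $\wR^2$ and unwind the definitions of the two sets. The only difference is that you additionally verify that $f\circ\gamma$ is a continuous definable germ (since $f$ is only assumed definable, not continuous), a point the paper's one-line proof passes over in silence; your o-minimality argument for it is sound.
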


\begin{proof} It suffices to remark that ${\widetilde R}^2=\widetilde {R^2}$ since 
$$\widetilde {\graph f}=\{\gamma: (0;\epsilon) \to R^2 : \gamma \textrm{ continuous definable and }
\Ima 
\gamma \subset \graph f \}\subset \widetilde {R^2}$$
$$\graph \widetilde f=\{(\gamma_1, \gamma_2)\in  {\widetilde R}^2: \gamma_2=f \circ \gamma_1   \} \subset  {\widetilde R}^2$$
\end{proof}

In \cite{Coste}, M. Coste prove as Theorem 5.8 that the collection $\mathcal S$ of subsets of $\widetilde {\mathbf R^n}$ given by $\widetilde X$, for $X\subset \R^n$ definable and $n \in \mathbb N$, together with the fibres of definable families (cf \cite{Coste} Definition 5.7), defines an o-minimal structure expanding $\wR$. These fibres enable typically to add the singleton $\{t\}$ as a definable subset of $\widetilde R$. We give below a geometric proof of a particular case of interest for us,  namely that the collection of subsets of $\widetilde {\mathbf R^n}$ given by $\widetilde X$, for $X\subset \R^n$ definable and $n \in \mathbb N$, are stable under the usual operation in o-minimal structures. 

\begin{prop} The collection of subsets of $\widetilde {\mathbf R^n}$ given by
  $\widetilde X$, for $X\subset \R^n$ definable and $n \in \mathbb N$,
  defines the $0$-definable sets of the o-minimal structure $\mathcal S$.
\end{prop}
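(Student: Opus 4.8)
The plan is to establish the equality of the two collections by a double inclusion, the substance being a short list of identities expressing that the family $\mathcal T=\{\widetilde X : X\subset\R^n\text{ definable},\ n\in\mathbb N\}$ is stable under the operations of an o-minimal structure. That every $\widetilde X$ is a $0$-definable set of $\mathcal S$ is immediate from the construction of $\mathcal S$ in \cite{Coste}: the $\widetilde X$'s are among its generating sets and involve no parameter from $\wR\setminus\R$. For the reverse inclusion, note that the $0$-definable sets of $\mathcal S$ are obtained from its atomic $0$-definable sets by finite unions and intersections, complements, Cartesian products, and projections along coordinate axes, taken without parameters; and that these atomic $0$-definable sets all belong to $\mathcal T$ — they are either algebraic subsets of $\wR^n$ defined over $\R$, which turn out to be of the form $\widetilde X$, or the sets $\widetilde X$ themselves, the fibres of definable families being precisely the sets that require a genuine parameter from $\wR\setminus\R$. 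So it suffices to prove that $\mathcal T$ is stable under the operations above.

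Every such verification rests on one observation, which is where the o-minimality of the ground structure over $\R$ enters: for a definable $X\subset\R^n$ and a germ $\gamma\in\widetilde{\R^n}$ represented by a continuous definable $\gamma\colon(0;\epsilon)\to\R^n$, the set $\{t\in(0;\epsilon):\gamma(t)\in X\}$ is a definable subset of $\R$, hence by axiom (5) for $\R$ it either contains an interval $(0;\epsilon')$ or is disjoint from one; in other words, either $\gamma\in\widetilde X$ or $\gamma\in\widetilde{\R^n\setminus X}$. This dichotomy gives $\widetilde{\R^n\setminus X}=\widetilde{\R^n}\setminus\widetilde X$, and, combined with the trivial $\widetilde{X\cap Y}=\widetilde X\cap\widetilde Y$, also $\widetilde{X\cup Y}=\widetilde X\cup\widetilde Y$ (a germ eventually in $X\cup Y$ but not eventually in $X$ is eventually in $\R^n\setminus X$, hence in $Y$). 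The identity $\widetilde{\R^{n+m}}=\widetilde{\R^n}\times\widetilde{\R^m}$ — a germ of arc into a product being a pair of germs of arcs after restriction to a common interval, as already observed above for $\R^2$ — yields $\widetilde{X\times Y}=\widetilde X\times\widetilde Y$. And if $Z=\{x\in\wR^n:g_1(x)=\cdots=g_k(x)=0\}$ with $g_i\in\R[X_1,\dots,X_n]$, then applying the dichotomy to each zero set $\{g_i=0\}\subset\R^n$ shows $Z=\widetilde Y$ for $Y=\{y\in\R^n:g_1(y)=\cdots=g_k(y)=0\}$.

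The projection case is the main point. For $A\subset\R^{n+1}$ definable, let $\widetilde p\colon\widetilde{\R^{n+1}}=\widetilde{\R^n}\times\widetilde{\R}\to\widetilde{\R^n}$ forget the last germ-coordinate; I claim $\widetilde p(\widetilde A)=\widetilde{p(A)}$, where $p\colon\R^{n+1}\to\R^n$ is the usual projection. The inclusion $\subseteq$ is clear. For $\supseteq$, take $\delta\in\widetilde{p(A)}$, represented by a continuous definable $\delta\colon(0;\epsilon)\to\R^n$ with $\delta(t)\in p(A)$ for all small $t$. Then $B=\{(t,s):(\delta(t),s)\in A\}\subset\R^2$ is definable and projects onto an interval $(0;\epsilon')$, so by definable choice in the structure over $\R$ there is a definable section $\eta\colon(0;\epsilon')\to\R$ of this projection, and by the Monotonicity Theorem (see \cite{vdd}) $\eta$ is continuous on some $(0;\epsilon'')$. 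The germ of $(\delta,\eta)$ then lies in $\widetilde A$ and is sent to $\delta$ by $\widetilde p$. Iterating all the closure properties just established, every $0$-definable set of $\mathcal S$ is of the form $\widetilde X$, which is the second inclusion. (Axiom (5) for $\mathcal T$ can also be seen directly: for $X\subset\R$ definable, decomposing $X$ into finitely many points $a_i$ and open intervals $(b_j;c_j)$ gives $\widetilde X=\bigcup_i\{\widetilde{a_i}\}\cup\bigcup_j(\widetilde{b_j};\widetilde{c_j})$ in $\wR$, using $\widetilde{(b;c)}=(\widetilde b;\widetilde c)$ for the Euclidean order; but this is in any case automatic once the two inclusions are in place.)

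The step I expect to be the main obstacle is the projection identity, and specifically the need to produce a genuine germ of a \emph{continuous} section of $A$ above $\delta$ near $0$: the curve selection lemma alone only provides a section along an arc approaching $0$, and one really needs definable choice combined with the Monotonicity Theorem over $\R$ to upgrade a possibly discontinuous definable section to one that is continuous on a small interval $(0;\epsilon'')$. A secondary, more routine point requiring care is the bookkeeping for the first inclusion — checking that the fibres of definable families contribute nothing new among the $0$-definable sets of $\mathcal S$, so that $\mathcal T$ really exhausts them.
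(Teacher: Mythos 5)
Your proof is correct, and for the Boolean operations, products, atomic sets, intervals and singletons it runs exactly as the paper's: the key dichotomy (for $\gamma$ definable and $X$ definable, $\gamma^{-1}(X)$ is a finite union of points and intervals, so $\gamma$ eventually lies in $X$ or eventually in its complement) is precisely the argument used in the paper for $\widetilde{A\cup B}$ and $\widetilde{A\setminus B}$. The one place where you take a genuinely different route is the projection step, which you correctly identify as the crux. The paper forms the incidence set $D=\{(t,c):p(c)=\gamma(t)\}$, compactifies to ensure its closure meets $\{0\}\times C$, extracts an arc $\delta=(\delta_1,\delta_2)$ in $\widetilde D$ by the curve selection lemma (Theorem \ref{csl}), and then reparametrizes via Lemma \ref{repara} so that the time component becomes the identity; this stays entirely within the two tools the paper has set up. You instead fiber over the parameter $t$ directly, invoke definable choice to produce a (possibly discontinuous) definable section $\eta$, and upgrade it to a continuous one on a small interval by the Monotonicity Theorem. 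Your route is more direct and avoids both the compactification and the reparametrization, at the cost of importing definable choice — which does hold in any o-minimal expansion of an ordered group, hence of a real closed field, so the step is legitimate, but it is a tool the paper deliberately does not use. Both arguments are sound; the paper's is self-contained relative to its stated lemmas, yours is shorter modulo the standard o-minimality toolkit.
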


\begin{proof} Let $A$ and $B$ are definable sets in $\R^n$. 
What we need to prove is the following:
\begin{enumerate}
\item[(i)] $\widetilde {A \cup B}=\widetilde A \cup \widetilde B$, $\widetilde {A \cap B}=\widetilde A \cap \widetilde B$, $\widetilde {A \setminus B}=\widetilde A \setminus \widetilde B$, $\widetilde {A\times B}=\widetilde A \times \widetilde B$,
\item[(ii)] if $p$ denotes the projection $p:\R^{n+1} \to \R^n$ onto the first $n$ coordinates, and $C\subset \R^{n+1}$ is definable, then $\widetilde {p(C)}=\widetilde p ( \widetilde C)$,
\item[(iii)] $$\widetilde {\{ (x,y,z)\in \R^3 : x=yz\}}=\{ (\alpha,\beta,\gamma)\in \widetilde \R^3 : \alpha=\beta \gamma\},$$
\item[(iv)] $$\widetilde {\{(x,y,z)\in \R^3 : x=y+z \}}=\{ (\alpha,\beta,\gamma)\in \widetilde \R^3 : \alpha=\beta+\gamma\},$$
\item[(v)] if $I \subset \R$ is an interval, then $\widetilde I \subset \widetilde \R$ is an interval,
\item[(vi)] if $S\subset \R$ is a singleton, then $\widetilde S \subset \widetilde \R$ is a singleton.
\end{enumerate}

Proof of $(i)$. The inclusion $\widetilde A \cup \widetilde B \subset \widetilde {A \cup B}$ is obvious. Conversely, take $\gamma \in \widetilde {A \cup B}$ with $\gamma$ defined and continuous on $(0;\epsilon)$. Assume $\gamma \notin \widetilde A$. Then $\gamma ^{-1}(A)$ is a finite union of points and intervals, therefore there exists 
$0<\epsilon_1<\epsilon$ such that $\gamma ((0;\epsilon_1)) \cap A=\emptyset$. Then $\gamma ((0;\epsilon_1)) \subset B$ and $\gamma \in \widetilde B$.

The equality $\widetilde {A \cap B}=\widetilde A \cap \widetilde B$ is obvious.

The inclusion $\widetilde {A \setminus B} \subset \widetilde A \setminus \widetilde B$ is obvious. Take $\gamma \in \widetilde A \setminus \widetilde B$, and assume $\gamma$ defined and continuous on $(0;\epsilon)$. Then $\gamma ^{-1}(B)\subset (0;\epsilon)$ is a finite union of points and intervals, that does not contain any interval of the form $(0;s)$ since $\gamma \notin \widetilde B$. Therefore $\gamma ((0;s)) \subset A \setminus B$. and $\gamma \in \widetilde {A \setminus B} $.

The equality $\widetilde {A\times B}=\widetilde A \times \widetilde B$ is obvious.

Proof of $(ii)$. Take $\gamma \in  \widetilde C$ and assume that $\gamma$ is defined and continuous on $(0;\epsilon)$, with $\gamma ((0;\epsilon)) \subset C$. Then $p\circ \gamma :(0;\epsilon) \to p(C)$ is a definable continuous function so that $\widetilde p (\gamma) \in \widetilde {p(C)}$. Therefore $\widetilde p (\widetilde C) \subset \widetilde {p(C)}$.\\
Conversely, take $\gamma \in \widetilde {p(C)}$, and choose $\epsilon>0$ so that $\gamma$ is a definable continuous function on $(0;\epsilon)$ with value in $p(C)$. Note that we can assume that $C$ is bounded, embedding if necessary $\R^{n+1}$ in $\mathbb S^{n+1}\subset \R^{n+2}$.
Then 
$$D=\{(t,c) \in (0;\epsilon)\times C: p(c)=\gamma(t)\}$$
is a definable set which boundary has a non-empty intersection with $\{0\}\times C$ since $C$ is bounded. By the curve selection lemma, there exists $\delta=(\delta_1,\delta_2) \in \widetilde D$, i.e. there exists $0<\epsilon_1<\epsilon$ such that $p(\delta_2(t))=\gamma (\delta_1(t))$ for $t \in (0;\epsilon_1)$. By Lemma \ref{repara} there exists $\alpha \in \wR$ defined on $(0;\epsilon_2)$, with $0<\epsilon_2<\epsilon_1$, such that $\delta_1 \circ \alpha (t)=t$ for $t\in (0;\epsilon_2)$. As a consequence $\delta_2 \circ \alpha \in \widetilde C$ satisfies $\widetilde p (\delta_2 \circ \alpha)=\gamma$, therefore $\gamma \in \widetilde p (\widetilde C)$.

The proofs of $(iii)$ and $(iv)$ are very similar, so let us prove $(iii)$. Take 
$$\gamma=(\gamma_1,\gamma_2,\gamma_3) \in \widetilde {\{ (x,y,z)\in \R^3 : x=yz\}}.$$
There exists $\epsilon>0$ such that $\gamma$ is defined on $(0;\epsilon)$ and for $t\in (0;\epsilon)$ the equality $\gamma_1(t)=\gamma_2(t) \gamma_3(t)$ holds. Therefore $\gamma_1=\gamma_2 \gamma_3$ and 
$$\widetilde {\{ (x,y,z)\in \R^3 : x=yz\}} \subset \{ (\alpha,\beta,\gamma)\in \widetilde \R^3 : \alpha=\beta \gamma\}.$$
If conversely $\alpha,\beta,\gamma\in \widetilde \R$ satisfies $\alpha=\beta \gamma$, then 
$$(\alpha,\beta,\gamma) \in \widetilde {\{ (x,y,z)\in \R^3 : x=yz\}}.$$

Proof of $(v)$. Assume $I=(a;b)$ with $a<b\in \R$. Then $\widetilde I=(\widetilde a;\widetilde b)$ where $\widetilde a, \widetilde b \in \wR$ denotes the constant germs equal to $a$ and $b$. Indeed, if $\gamma \in \widetilde I$, there exists $\epsilon>0$ such that $\gamma$ is defined on $(0;\epsilon)$ and $\gamma (t) \in (a;b)$ for $t\in (0;\epsilon)$. In particular $\widetilde a(t)=a<\gamma(t)<b=\widetilde b(t)$ for $t\in (0;\epsilon
)$ so $\gamma \in (\widetilde a;\widetilde b)$. The proof of the converse inclusion is similar.

Proof of $(vi)$. An element of $\widetilde S$ is a germ of continuous definable curve with image in $S=\{s\}$, so is equal to the function germ $\widetilde s$ constant equal to $s$.

\end{proof}

\begin{rmk} 

 The convex set given in example \ref{ex} is not a
  definable subset of $\widetilde {\mathbb R}=\Pui$
  since it can not be described by a finite union of intervals.

\end{rmk}


\section{Continuity}\label{co}
We prove in this section that a continuous definable map between definable sets induces a continuous map between the spaces of definable arcs. Using \L ojasiewicz inequality, we give a proof of this fact generalising the H\"older argument of Theorem \ref{main}. We propose another approach, studying more in details the transport of properties between the initial o-minimal structure and the new one constructed in section \ref{defin}. These results are a geometric interpretation of the model-theoretical aspects developed in \cite{Coste}, Chapter 5.

\subsection{\L ojasiewicz inequality in the closed and bounded case}\label{loja}

We recall \L ojasiewicz inequality in the context of o-minimal structures, not necessarily polynomially bounded. We refer to \cite{KK98} for the real case, and note that the real closed field case follows similarly.

Let $\R$ be a real closed field and and fix an o-minimal structure expanding it.

\begin{thm}\label{LI} Let $X\subset \R^n$ be a definable set, closed and bounded. Let $\phi_1,\phi_2:X \to \R$ be non negative continuous definable functions satisfying $\phi_2^{-1}(0)\subset \phi_1^{-1}(0)$. There exists a strictly increasing continuous definable function $\rho: [0,+\infty) \to [0,+\infty)$ such that
$$\forall x \in X, ~~\phi_2(x) \geq \rho \circ \phi_1(x).$$
\end{thm}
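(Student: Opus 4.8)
The plan is to mimic the classical proof of the \L ojasiewicz inequality, replacing every use of compactness of a real interval (which fails over an arbitrary real closed field) by the curve selection lemma (Theorem \ref{csl}) and the monotonicity/reparametrisation results available in o-minimal structures. First I would reduce to understanding the function $\theta:[0,+\infty)\to[0,+\infty)$ defined by
\[
\theta(s)=\inf\{\phi_2(x): x\in X,\ \phi_1(x)\ge s\},
\]
which is definable (the set $\{(s,r): \exists x\in X,\ \phi_1(x)\ge s,\ \phi_2(x)\le r\}$ is definable by axioms (2)--(4), and $\theta$ is its lower boundary) and non-decreasing. The crucial claim is that $\theta(s)>0$ for every $s>0$. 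Suppose not: then for some $s_0>0$ the definable set $A=\{x\in X:\phi_1(x)\ge s_0\}$ satisfies $0\in\overline{\phi_2(A)}$. Since $X$ is closed and bounded, $A$ is closed and bounded, hence so is $\phi_2(A)$ (continuous definable image of a closed bounded definable set is closed and bounded, by the o-minimal version of this fact), so $0\in\phi_2(A)$, i.e. there is $x_0\in A$ with $\phi_2(x_0)=0$; but then $x_0\in\phi_2^{-1}(0)\subset\phi_1^{-1}(0)$, contradicting $\phi_1(x_0)\ge s_0>0$. Hence $\theta$ is positive on $(0,+\infty)$.

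Next I would turn the non-decreasing, positive, definable function $\theta$ into the desired strictly increasing continuous $\rho$ lying below it. By the o-minimal monotonicity theorem, $\theta$ is continuous and monotone on a neighbourhood $(0,\delta)$ of $0$; shrinking, I may assume $\theta$ is continuous and non-decreasing on all of $[0,+\infty)$ after a routine adjustment (replace $\theta$ by its running infimum, which only decreases it and keeps it $\ge 0$, positive on $(0,+\infty)$, definable and non-decreasing). Then it suffices to produce a strictly increasing continuous definable $\rho:[0,+\infty)\to[0,+\infty)$ with $\rho(0)=0$ and $\rho(s)\le\theta(s)$ for all $s$: for instance one can take $\rho(s)=\frac{1}{s+1}\int_0^s \min(\theta(u),u)\,du$ is not available without integration, so instead I would argue directly that $\bar\theta(s):=\inf_{u\ge s}\theta(u)=\theta(s)$ already (monotone) and then set $\rho(s)=\tfrac12\min\big(\theta(s),\ \tfrac{s}{s+1}\big)$; this is continuous definable, $\le\theta$, but possibly only non-decreasing, so finally perturb it to be strictly increasing by adding a small strictly increasing definable function that stays below $\theta-\rho$ where that gap might vanish — concretely, since $\theta>0$ on $(0,\infty)$, one can choose $\rho$ to be any strictly increasing continuous definable function dominated by the strictly-increasing-ification of $\theta$, which exists by elementary manipulation in the o-minimal structure.

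Finally, with $\rho$ in hand, for every $x\in X$ one has $\phi_1(x)\ge \phi_1(x)$ trivially, so by definition of $\theta$, $\phi_2(x)\ge\theta(\phi_1(x))\ge\rho(\phi_1(x))$, which is the claimed inequality. The main obstacle I anticipate is not any single deep step but rather the bookkeeping in the last paragraph: fabricating from the monotone positive definable function $\theta$ an honestly \emph{strictly} increasing continuous definable $\rho$ below it, without the convenience of integration. The clean way around this is to invoke the o-minimal structure of $\theta$ near $0$ (monotonicity theorem gives an interval on which $\theta$ is either strictly increasing, or constant; in the constant case $\theta\equiv\theta(0^+)>0$ near $0$ and one takes $\rho(s)=\min(s,\theta(0^+)/2)$ locally) and then patch with a strictly increasing function for large $s$; I would also double-check that closed-and-bounded definable sets have closed-and-bounded continuous definable images over an arbitrary real closed field, which is the one genuinely o-minimal (rather than purely real) input, and which follows from definable choice together with the fact that over a real closed field a definable subset of $\R$ that is closed and bounded attains its infimum.
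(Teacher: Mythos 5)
The paper offers no proof of Theorem \ref{LI} to compare against: it simply cites \cite{KK98} for the real case and asserts that the real closed field case follows similarly. Your argument is the standard one and is essentially correct; it is also the same device the paper itself deploys later in the proof of Proposition \ref{L-loc}, where the function $\tau$ defined on the image of $(\phi_1,\phi_2)$ plays exactly the role of your $\theta$. The core steps are sound: $\theta(s)=\inf\{\phi_2(x):x\in X,\ \phi_1(x)\ge s\}$ is definable (infimum of a definable family) and non-decreasing; its positivity for $s>0$ follows because the infimum over the closed bounded definable set $\{\phi_1\ge s\}$ is attained -- over an arbitrary real closed field this comes from curve selection, the existence of limits of bounded monotone definable functions (a consequence of o-minimality, not of completeness), and closedness of $X$ -- and attainment plus $\phi_2^{-1}(0)\subset\phi_1^{-1}(0)$ gives the contradiction; finally $\phi_2(x)\ge\theta(\phi_1(x))\ge\rho(\phi_1(x))$. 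Two small repairs are needed. First, $\theta(s)$ is undefined for $s>\max_X\phi_1$ (the set is empty); restrict to $[0,\max_X\phi_1]$ and extend $\rho$ freely beyond. Second, your explicit candidate $\rho(s)=\tfrac12\min\bigl(\theta(s),\tfrac{s}{s+1}\bigr)$ need not be continuous, because the non-decreasing definable $\theta$ may have upward jump discontinuities; your fallback via the monotonicity theorem is the correct fix: on some $(0,\delta)$ the function $\theta$ is continuous and monotone, and one takes for instance $\rho(s)=\theta(s/2)\cdot\frac{s}{s+\delta}$ there (strictly increasing, continuous, definable, $\le\theta(s)$, tending to $0$), while on $[\delta,\infty)$ one has $\theta\ge\theta(\delta/2)>0$, so any bounded strictly increasing continuous definable continuation staying below that constant completes the construction.
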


\begin{rmk} In the polynomially bounded case, we may choose $\rho$ of the form $\rho(s)=c|s|^r$, with $c,r>0$.
\end{rmk}

We apply \L ojasiewicz inequality to prove the continuity between the spaces of arcs in the closed and bounded case.

\begin{prop}\label{comp} Let $X$ and $Y$ be definable sets, with $X$ closed and bounded. Let $f:X\to Y$ be a continuous definable map. Then $\widetilde f: \widetilde X \to \widetilde Y$ is continuous. 
\end{prop}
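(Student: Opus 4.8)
The plan is to reduce the continuity of $\widetilde f$ to a metric estimate coming from \L ojasiewicz inequality, mimicking the H\"older argument of Theorem \ref{main} but replacing the exponent $\alpha$ by the abstract modulus $\rho$ provided by Theorem \ref{LI}. First I would recall what the Euclidean topology on $\widetilde X \subset \widetilde{\R^n}$ looks like in terms of germs: a basis of neighbourhoods of a germ $\widetilde\gamma$ is given by the ``balls'' $\{\widetilde\delta : |\widetilde\gamma - \widetilde\delta| < \widetilde\varepsilon\}$ where $\widetilde\varepsilon \in \wR$ is a positive germ, and by the description of positivity in $\wR$ this means: there is $s>0$ in $\R$ and a continuous definable $\varepsilon:(0;s)\to\R_{>0}$ with $|\gamma(t)-\delta(t)| < \varepsilon(t)$ for $t\in(0;s)$. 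So continuity of $\widetilde f$ at $\widetilde\gamma$ amounts to: given any positive germ $\widetilde\eta\in\wR$, produce a positive germ $\widetilde\varepsilon\in\wR$ such that $|\gamma-\delta|<\varepsilon$ near $0$ forces $|f\circ\gamma - f\circ\delta| < \eta$ near $0$.

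The key input is \L ojasiewicz inequality applied to the closed and bounded set $X\times X$ (closed and bounded because $X$ is). Set $\phi_1(x,x') = |x-x'|$ and $\phi_2(x,x') = |x-x'| + |f(x)-f(x')|$; since $f$ is continuous definable, $\phi_2$ is continuous definable, both are non-negative, and $\phi_2^{-1}(0) = \phi_1^{-1}(0) = \{(x,x):x\in X\}$, so in particular $\phi_2^{-1}(0)\subset\phi_1^{-1}(0)$ — actually one only needs $\phi_1^{-1}(0)\subset\phi_2^{-1}(0)$ to control $f$, so let me instead take $\phi_2(x,x') = |f(x)-f(x')|$ and $\phi_1(x,x')=|x-x'|$; then $\phi_1^{-1}(0)=\{(x,x)\}\subset\phi_2^{-1}(0)$, which is the hypothesis of Theorem \ref{LI} after swapping the roles, so Theorem \ref{LI} yields a strictly increasing continuous definable $\rho:[0;+\infty)\to[0;+\infty)$ with $\rho(0)=0$ and
$$|f(x)-f(x')| \leq \rho^{-1}\bigl(|x-x'|\bigr) \quad\text{for all } x,x'\in X,$$
after renaming (here I use that a strictly increasing continuous definable function with $\rho(0)=0$ has a strictly increasing continuous definable inverse on its image, and $\rho^{-1}(s)\to 0$ as $s\to 0^+$). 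Concretely: for every positive germ $\widetilde\eta$ choose $\widetilde\varepsilon = \widetilde\rho(\widetilde\eta) \in \wR$ (a positive germ, by the transport of positivity and the fact that $\rho$ applied to a positive germ is positive, using that $\widetilde\rho$ is definable and $\rho(s)>0$ for $s>0$); then $|\gamma(t)-\delta(t)|<\varepsilon(t)=\rho(\eta(t))$ for small $t$ implies, by the displayed estimate with $x=\gamma(t)$, $x'=\delta(t)$, that $|f\circ\gamma(t) - f\circ\delta(t)| \leq \rho^{-1}(|\gamma(t)-\delta(t)|) < \rho^{-1}(\rho(\eta(t))) = \eta(t)$ for small $t$, i.e. $|\widetilde f(\widetilde\gamma) - \widetilde f(\widetilde\delta)| < \widetilde\eta$ in $\wR$. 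This is exactly continuity of $\widetilde f$ at $\widetilde\gamma$, and since $\widetilde\gamma$ was arbitrary, $\widetilde f$ is continuous.

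The main obstacle I anticipate is purely bookkeeping rather than conceptual: one must be careful that all the auxiliary objects (the modulus $\rho$ and its inverse, the germ $\widetilde\rho(\widetilde\eta)$, the intersection of the various domains $(0;\epsilon_i)$ on which $\gamma,\delta,\varepsilon,\eta$ are simultaneously defined) live in the right place and that ``near $0$'' is uniform enough — but since $\wR$ is a field of germs, shrinking finitely many intervals is harmless, and the transport results of Section \ref{defin} (in particular $\widetilde{\graph\rho} = \graph\widetilde\rho$) guarantee that $\widetilde\rho$ really is an order-preserving map $\wR_{\geq 0}\to\wR_{\geq 0}$ fixing $0$. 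A secondary point worth a line is that $\widetilde X$ carries the subspace topology from the Euclidean topology of $\widetilde{\R^n}$, so it suffices to check the estimate coordinatewise / with the Euclidean norm on $\R^n$, which the \L ojasiewicz setup already handles since $|x-x'|$ is a continuous definable function on $X\times X\subset\R^n\times\R^n$.
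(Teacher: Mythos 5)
Your proposal is correct and follows essentially the same route as the paper: apply Theorem \ref{LI} to $\phi_1(x,x')=|f(x)-f(x')|$ and $\phi_2(x,x')=|x-x'|$ to get $|f(x)-f(x')|\leq\rho^{-1}(|x-x'|)$, then read this off on representatives of germs. The only (harmless, arguably cleaner) difference is that you invoke \L ojasiewicz globally on $X\times X$, which is closed and bounded by hypothesis, whereas the paper localises to $B\times B$ for a closed neighbourhood $B$ of $\gamma(0)$.
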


\begin{rmk} In the polynomially bounded case, the map $f$ is H\"older by \L ojasiewicz inequality. In particular, in the case of the field of Puiseux series equipped with the $t$-adic topology, this result becomes clear.
\end{rmk}

\begin{proof}[Proof of Proposition \ref{comp}]
For $\gamma \in \widetilde X$, there exists $\epsilon$ such that $\gamma$ admits a representative $\gamma: [0,\epsilon) \to X$. Let $B\subset X$ be a closed neighbourhood of $\gamma (0)$ in $X$. Define two functions $\phi_1,\phi_2$ on $B\times B$ by
$$\phi_1(x_1,x_2)=|f(x_1)-f(x_2)|$$
and
$$\phi_2(x_1,x_2)=|x_1-x_2|.$$
Then $\phi_1$ and $\phi_2$ are continuous definable, $\phi_2^{-1}(0) \subset \phi_1^{-1}(0)$ and therefore by \L ojasiewicz inequality there exists a strictly increasing continuous definable function $\rho: [0,+\infty) \to [0,+\infty)$ such that
$$\forall (x_1,x_2) \in B\times B, ~~\phi_2(x_1,x_2) \geq \rho \circ \phi_1(x_1,x_2),$$
which means
$$\forall (x_1,x_2) \in B\times B, ~~|x_1-x_2| \geq \rho ( |f(x_1)-f(x_2)|).$$
As a consequence,
$$\forall (x_1,x_2) \in B\times B, ~~|f(x_1)-f(x_2)| \leq \rho ^{-1}(|x_1-x_2|),$$
and $\rho^{-1}$ is a definable continuous function with $\rho^{-1}(0)=0$. In particular, for any arc $\delta$ closed to $\gamma$, there exists a representative $\delta:[0,\epsilon) \to X$ of $\delta$ with value in $B$ and therefore
$$\forall t\in [0,\eps),~~ |f\circ \gamma(t)-f\circ \delta (t)| \leq \rho ^{-1}(|\gamma (t)-\delta (t)|),$$
which proves the continuity of $\widetilde f$.

\end{proof}

\subsection{A generalised \L ojasiewicz inequality}
We slightly generalise Theorem \ref{LI} in order to get rid of the boundedness assumption of $X$ is Proposition \ref{comp}.

\begin{prop}\label{L-loc} Let $X\subset \R^n$ be a locally closed definable set. Let $\phi_1,\phi_2:X \to \R$ be non negative continuous definable functions satisfying $\phi_2^{-1}(0)\subset \phi_1^{-1}(0)$. There exist a strictly increasing continuous definable function $\rho: [0,+\infty) \to [0,+\infty)$ and a positive continuous definable function $c:X\to \R$ such that $c_{|\phi_2^{-1}(0)}=1$ and 
$$\forall x \in X, ~~\phi_2(x) \geq c(x) \rho \circ \phi_1(x).$$
\end{prop}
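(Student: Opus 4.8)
The idea is to reduce the locally closed case to the closed and bounded case (Theorem \ref{LI}) by a compactification-and-patching argument, the extra function $c$ measuring the failure of properness near the part of $\overline X \setminus X$. First I would write $X = Z \setminus W$ with $Z$ closed and $W \subset Z$ closed, then embed $\R^n$ in the sphere $\mathbb S^n \subset \R^{n+1}$ via a rational regular embedding so that $\overline X$ (closure in $\mathbb S^n$) is closed and bounded. Set $F = \overline{X} \setminus X$, a closed definable subset of the closed bounded set $\overline X$, disjoint from $X$. Extend $\phi_1,\phi_2$ to $\overline{X}$: this cannot be done continuously in general, but one can consider on $\overline X \times \overline X$ the closed bounded definable set and use the distance function $d(x) = |x - F| = \inf\{|x-z| : z \in F\}$, which is continuous definable, strictly positive exactly on $X$, and bounded.

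The heart of the argument is to apply Theorem \ref{LI} on a suitable closed bounded model of the situation. Concretely, on $\overline X$ consider the graphs: let $\Gamma$ be the (closure of the) graph of $(\phi_1,\phi_2): X \to \R^2$ inside $\overline X \times \overline{[0,+\infty]}^2$ (using a bounded model of $[0,+\infty]$, e.g. $[0,1]$ via $s \mapsto s/(1+s)$); this $\Gamma$ is closed and bounded. The two projections $\psi_1, \psi_2: \Gamma \to [0,1]$ obtained from $\phi_1,\phi_2$ are continuous definable, and one checks $\psi_2^{-1}(0) \subset \psi_1^{-1}(0) \cup (\text{part over } F)$. To kill the contribution over $F$ I would work instead with the pair $\psi_2 + (1 - \tilde d)$ and $\psi_1$, where $\tilde d$ is a rescaled version of $d$ normalised to take values in $[0,1]$ and equal to $1$ on $\phi_2^{-1}(0)$ after an appropriate definable reparametrisation — the point being that $\psi_2 + (1-\tilde d)$ vanishes only where both $\psi_2 = 0$ and $\tilde d = 1$, i.e. genuinely on (the closure of) $\phi_2^{-1}(0)$, which is contained in $\phi_1^{-1}(0)$ by hypothesis. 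Then Theorem \ref{LI} yields a strictly increasing continuous definable $\rho:[0,+\infty)\to[0,+\infty)$ with $\phi_2(x) + (1 - \tilde d(x)) \geq \rho(\phi_1(x))$ on $X$; since $\tilde d(x) \in (0,1]$ on $X$, dividing by a suitable expression built from $\tilde d$ produces the desired inequality $\phi_2(x) \geq c(x)\,\rho(\phi_1(x))$ with $c = \tilde d$ (or $c = \tilde d /(1 + \text{something})$), continuous definable, positive on $X$, and equal to $1$ on $\phi_2^{-1}(0)$ because there $\tilde d = 1$.

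The main obstacle I anticipate is bookkeeping the normalisation of the auxiliary distance function $\tilde d$ so that simultaneously (a) $\tilde d$ is continuous definable and bounded on $\overline X$, (b) $\tilde d > 0$ exactly on $X$ so that the patched function $\psi_2 + (1 - \tilde d)$ has zero set exactly $\overline{\phi_2^{-1}(0)}$ and no spurious zeros over $F$, and (c) $\tilde d \equiv 1$ on $\phi_2^{-1}(0)$, which requires composing $d$ with a definable homeomorphism of $[0,+\infty)$ that is the identity near the value $d$ takes on $\phi_2^{-1}(0)$ — but $d$ need not be constant there, so one must instead multiply $\rho$ by an innocuous factor or, more cleanly, absorb the discrepancy into $c$ directly by taking $c(x) = \min(1, d(x)/\lambda(x))$ for a carefully chosen continuous definable $\lambda$. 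Handling the case where $\phi_2^{-1}(0)$ meets $F$ in its closure (so the ``equal to $1$'' condition interacts with the boundary) is the delicate point; everywhere else the argument is a routine application of Theorem \ref{LI} combined with the curve selection lemma to verify the inclusion of zero sets after compactification.
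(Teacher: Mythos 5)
Your general strategy (reduce to the closed and bounded case of Theorem \ref{LI} and pay for the non-properness near $F=\overline X\setminus X$ with the correction factor $c$) is the right one, but the concrete mechanism you propose has two genuine gaps. First, the additive estimate $\phi_2(x)+(1-\tilde d(x))\geq \rho(\phi_1(x))$ does \emph{not} yield the multiplicative estimate $\phi_2(x)\geq c(x)\,\rho(\phi_1(x))$ by ``dividing by a suitable expression built from $\tilde d$'': at every point where $1-\tilde d(x)\geq \rho(\phi_1(x))$ the additive inequality is vacuous and gives no lower bound on $\phi_2(x)$ whatsoever. Already for $X=(0,1]$, $\phi_1\equiv 1$, $\phi_2(x)=x$, your inequality reads $x+(1-\tilde d(x))\geq\rho(1)$, which is satisfied as soon as $\rho(1)\leq 1$ and carries no information; the desired $c$ (here essentially $x/\rho(1)$) must be built directly from $\phi_2$ and $\rho\circ\phi_1$, not from $\tilde d$. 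Second, the auxiliary function $\tilde d$ you need --- continuous on the closed bounded compactification, positive exactly on $X$, and identically $1$ on $\phi_2^{-1}(0)$ --- cannot exist whenever $\overline{\phi_2^{-1}(0)}$ meets $F$ (a continuous function on the compactification cannot equal $1$ on $\phi_2^{-1}(0)$ and $0$ on $F$ if these sets have a common limit point). You flag this yourself as ``the delicate point,'' but it is not bookkeeping: it is exactly where the construction breaks, and the proposed fix $c(x)=\min(1,d(x)/\lambda(x))$ with an unspecified $\lambda$ begs the question.

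For comparison, the paper avoids both problems by never forming an additive combination: it first replaces $X$ by a closed definable set, then restricts to a definable neighbourhood $U$ of $\phi_1^{-1}(0)$ on which $\phi_1$ is bounded and $\phi_1|_{U\setminus\phi_1^{-1}(0)}$ is proper (e.g.\ $U=\{\phi_1\phi_3\leq 1\}$ with $\phi_3$ a distance function), pushes $U$ forward to $\R^2$ under $(\phi_1,\phi_2)$ to obtain $\phi_2\geq\rho\circ\phi_1$ on $U$ by a minorant argument on the closed image, and only then defines $c$ as a positive continuous definable function with $c\leq\phi_2/(\rho\circ\phi_1)$ off $\phi_2^{-1}(0)$ and $c=1$ on $\phi_2^{-1}(0)$. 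If you want to salvage your approach, you should replace the additive perturbation by this localisation to a properness neighbourhood, and construct $c$ from the quotient $\phi_2/(\rho\circ\phi_1)$ rather than from the distance to $F$.
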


\begin{proof} As $X$ is locally closed, we can find a definable homeomorphism mapping $X$ onto a closed definable set, therefore we can assume $X$ is closed. 
Let $U$ denote a definable neighbourhood of $\phi_1^{-1}(0)$ in $X$ such that $\phi_1|_{U}$ is bounded and $\phi_1|_{U-\phi_1^{-1}(0)}:U-\phi_1^{-1}(0)\to \phi_1(U-\phi_1^{-1}(0))$ is proper.  Such a neighbourhood $U$ always exists; for example, let $\phi_3$ denote the function on $X$ measuring the distance from a fixed point of $\phi_1^{-1}(0)$. 
Then the set $U=\{x\in X:\phi_1(x)\phi_3(x)\le1\}$ is convenient.

Let $Y$ denote the image of $U$ under the map $(\phi_1,\phi_2):X\to R^2$. 
Then $Y$ is a closed definable subset of a sufficiently small closed neighbourhood of the origin in $R^2$ by the properness condition. Moreover $Y\cap(\R\times\{0\})=\{(0,0)\}$ and therefore there exists a continuous definable function $\tau$ on $\Ima {\phi_1}_{|U}$ such that $\tau^{-1}(0)=\{0\}$ and $x_2\ge\tau(x_1)$ for $(x_1,x_2)\in Y$, so that $\phi_2\ge\tau\circ\phi_1$ on $U$. One can construct a strictly increasing continuous definable function $\rho:[0,+\infty) \to [0,+\infty)$ such that $\rho \leq \tau$ on $\phi_1|_{U}(U)$. Therefore $\phi_2 \geq \rho \circ \phi_1$ on $U$.

To conclude, it suffices to choose a positive continuous definable function $c$ so that $c_{|\phi_2^{-1}(0)}=1$ and $c \leq  \frac{\phi_2}{\rho \circ \phi_1}$ on $X\setminus \{\phi_2^{-1}(0)\}$.
\end{proof}

Let $M$ be a definable set in some o-minimal structure. We define $\widehat M$ by
$$\widehat M=\{\gamma\in\widetilde M:\lim_{t\to 0^+}\gamma(t)\in M\}.$$
If $h:M \to N$ is a continuous definable map between definable sets $M$ and $N$, define $\widehat h: \widehat M \to \widehat N$ as the restriction of $\widetilde h$ to $\widehat M$. Note that neither $\widehat M$ nor $\widehat h$ are definable.

\begin{cor}\label{non-comp} Let $X$ and $Y$ be definable sets, with $X\subset \R^n$ locally closed. Let $f:X\to Y$ be a continuous definable map. Then $\hat f: \hat X \to \hat Y$ is continuous. 
\end{cor}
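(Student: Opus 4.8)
The plan is to deduce Corollary \ref{non-comp} from Proposition \ref{L-loc} in the same way Proposition \ref{comp} was deduced from Theorem \ref{LI}, the only new feature being the presence of the extra positive weight $c(x)$ and the fact that one only works with arcs whose origin already lies in $X$. First I would fix $\gamma\in\widehat X$ and a representative $\gamma:[0,\epsilon)\to X$ with $\gamma(0)=\gamma(0)\in X$. Using that $X$ is locally closed, I replace $X$ by a definable homeomorph that is closed, so that a small closed ball $B$ around $\gamma(0)$ meets $X$ in a closed bounded neighbourhood $B\cap X$ of $\gamma(0)$ in $X$ (alternatively, apply Proposition \ref{L-loc} directly on $X$ and restrict). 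On $B\times B$ (or on $X\times X$) I introduce $\phi_1(x_1,x_2)=|f(x_1)-f(x_2)|$ and $\phi_2(x_1,x_2)=|x_1-x_2|$, which are non-negative continuous definable with $\phi_2^{-1}(0)\subset\phi_1^{-1}(0)$, and apply Proposition \ref{L-loc}.

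This gives a strictly increasing continuous definable $\rho:[0,+\infty)\to[0,+\infty)$ (so $\rho(0)=0$ and $\rho^{-1}$ continuous with $\rho^{-1}(0)=0$) and a positive continuous definable weight $c$ on $X\times X$ with $c\equiv 1$ on $\phi_2^{-1}(0)$ (the diagonal) such that $|x_1-x_2|\ge c(x_1,x_2)\,\rho(|f(x_1)-f(x_2)|)$, hence $|f(x_1)-f(x_2)|\le \rho^{-1}\!\bigl(|x_1-x_2|/c(x_1,x_2)\bigr)$. Now for $\delta$ close to $\gamma$ in $\widehat X$ I pick a representative $\delta:[0,\epsilon)\to X$ with $\delta(0)=\gamma(0)$ and image in $B$; then $t\mapsto c(\gamma(t),\delta(t))$ is a continuous definable germ which tends to $c(\gamma(0),\gamma(0))=1$ as $t\to 0^+$ (since $(\gamma(0),\gamma(0))$ lies on the diagonal where $c=1$), so it is a positive germ bounded away from $0$ near $0$, i.e. $1/c(\gamma(t),\delta(t))$ is a bounded germ. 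Consequently $\ord\bigl(f\circ\gamma - f\circ\delta\bigr)$ is controlled from below by $\ord(\gamma-\delta)$ via $\rho^{-1}$, and letting $\delta\to\gamma$ (i.e. $\ord(\gamma-\delta)\to\infty$) forces $\ord(f\circ\gamma-f\circ\delta)\to\infty$, which is exactly continuity of $\widehat f$ at $\gamma$ for the Euclidean topology of $\widehat Y$.

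The step I expect to require the most care is the handling of the weight $c$: unlike in Proposition \ref{comp}, here the inequality comes with a factor $c(x_1,x_2)$ that is only guaranteed to equal $1$ on the diagonal and positive elsewhere, so one must argue that along the pair of arcs $(\gamma(t),\delta(t))$ — whose common limit $\gamma(0)$ sits on the diagonal — the germ $c(\gamma(t),\delta(t))$ stays positive and bounded below near $0$, which is where one uses that $\delta(0)=\gamma(0)$, i.e. that we work with $\widehat X$ and not all of $\widetilde X$. A secondary point worth a sentence is the reduction to the closed case: the definable homeomorphism sending $X$ onto a closed definable set from Proposition \ref{L-loc} is applied to $X$ itself, and one notes that it does not affect the conclusion since we only need the Łojasiewicz-type inequality on $X$, transported back. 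With these two observations the argument is otherwise identical to the proof of Proposition \ref{comp}.
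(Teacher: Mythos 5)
Your argument is correct and is essentially the paper's proof: apply Proposition \ref{L-loc} to $\phi_1=|f(x_1)-f(x_2)|$, $\phi_2=|x_1-x_2|$ on the (locally closed) product, bound $c(\gamma(t),\delta(t))$ below by $\tfrac12$ using that $c=1$ on the diagonal and that $(\gamma(t),\delta(t))$ tends to the diagonal point $(\gamma(0),\gamma(0))\in X\times X$, and conclude from $|f\circ\gamma-f\circ\delta|\leq\rho^{-1}(2|\gamma-\delta|)$. The only point to make explicit is that $\delta(0)=\gamma(0)$ is not automatic for $\delta\in\widehat X$ near $\gamma$ unless one restricts to a ball of infinitesimal radius around $\gamma$ (the paper's $B(\gamma,t^l)$), and that the final step should be phrased via the modulus $\rho^{-1}$ rather than via $\ord$, which is not the right gauge of Euclidean smallness in a non polynomially bounded structure.
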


\begin{proof} Using proposition \ref{L-loc} and proceeding similarly to the proof of Proposition \ref{comp}, we obtain that for $\gamma,\delta \in \hat X$, there exist $\epsilon>0$ such that 
$$\forall t\in [0,\eps),~~ |f\circ \gamma(t)-f\circ \delta (t)| \leq \rho ^{-1}(\frac{|\gamma (t)-\delta (t)|}{c(\gamma(t),\delta(t))}).$$
Let $U$ be defined by 
$$U=\{(x_1,x_2)\in X^2: ~~c(x_1,x_2)>\frac{1}{2}\}.$$
Fixing $\gamma$, there exists $l\in \mathbb N$ such that if $|\gamma(t)-\delta(t)|<t^l$ for $t\in [0,\eps)$, then $(\gamma(t),\delta(t)) \in U$ for $t\in [0,\eps)$. Therefore for $\delta \in B(\gamma, t^l)$ we obtain 
$$\forall t\in [0,\eps),~~ |f\circ \gamma(t)-f\circ \delta (t)| \leq \rho ^{-1}(2|\gamma (t)-\delta (t)|)$$
which proves the continuity of $\hat f$.
\end{proof}







\subsection{Transport of properties}\label{trans}

We develop now the other approach based on the extension of fields from a real closed field $\R$ to its field of germs of definable arcs $\widetilde \R$. First, we note that the closedness and boundedness of a definable set are preserved under the extension.

\begin{lemma}\label{closed} Let $A$ be a definable subset of a
  definable set $B$. Then $A$ is closed in $B$ if and only if $\widetilde A$ is closed in $\widetilde B$.
\end{lemma}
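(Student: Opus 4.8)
The plan is to prove both implications by working with representatives of germs of definable arcs and exploiting the curve selection lemma together with the basic fact that a continuous definable curve $\gamma:(0;\epsilon)\to B$ has a well-defined limit $\lim_{t\to 0^+}\gamma(t)$ in $\overline B$ (by o-minimality of the structure over $\R$, the coordinate functions of $\gamma$ are monotone near $0$, hence converge in $\R\cup\{\pm\infty\}$; boundedness issues can be dealt with by an embedding into a sphere). Throughout I will use the description of the Euclidean topology on $\widetilde B$ via the balls $B(\alpha,r)$ with $r>0$ in $\widetilde\R$, and I will freely use that for $\alpha,\beta\in\widetilde\R$ one has $\alpha<\beta$ in $\widetilde\R$ iff $\alpha(t)<\beta(t)$ for all sufficiently small $t>0$.

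First I would prove: if $A$ is closed in $B$, then $\widetilde A$ is closed in $\widetilde B$. Take $\beta\in\widetilde B$ in the closure of $\widetilde A$; I want $\beta\in\widetilde A$, i.e. $\beta$ has a representative with image in $A$. Fix a representative $\beta:(0;\epsilon)\to B$. The plan is to show that for small $t$, $\beta(t)$ lies in $A$. Consider the definable function $t\mapsto d(\beta(t),A)$ on $(0;\epsilon)$ (distance to $A$ inside $\R^n$, or to $\overline A$); it is continuous definable, so it defines an element $\delta_A\in\widetilde\R$ which is $\geq 0$. If $\delta_A>0$ in $\widetilde\R$, then $d(\beta(t),A)\geq\eta(t)$ for some $\eta>0$ in $\widetilde\R$ and all small $t$, which would force $B(\beta,\eta/2)\cap\widetilde A=\emptyset$, contradicting that $\beta$ is in the closure of $\widetilde A$ — here I use that any $\alpha\in\widetilde A$ is, near $0$, at distance $<\eta/2(t)$ forces its values to be within $\eta/2$ of $A$-points while simultaneously in $A$, impossible once $\eta>0$. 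Hence $\delta_A=0$ in $\widetilde\R$, so $d(\beta(t),A)=0$ for all small $t$; since $A$ is closed in $B$ and $\beta(t)\in B$, this gives $\beta(t)\in A$ for small $t$, i.e. $\beta\in\widetilde A$.

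Next the converse: if $\widetilde A$ is closed in $\widetilde B$, then $A$ is closed in $B$. Suppose not; then there is $x\in(\overline A\cap B)\setminus A$. By the curve selection lemma (Theorem \ref{csl}) applied to the definable set $A$ and the point $x\in\overline A$, there is a continuous definable $\gamma:[0;1)\to\R^n$ with $\gamma(0)=x$ and $\gamma((0;1))\subset A$; restricting, $\gamma$ defines an element of $\widetilde A$ (its values for small $t>0$ lie in $A\subset B$). On the other hand the constant germ $\widetilde x$ lies in $\widetilde B$ since $x\in B$, and one checks $\gamma\to\widetilde x$ in the Euclidean topology of $\widetilde\R^n$: indeed $|\gamma(t)-x|$ is a continuous definable function of $t$ tending to $0$ as $t\to 0^+$, hence represents a positive infinitesimal of $\widetilde\R$, so $\gamma$ lies in every ball $B(\widetilde x,r)$ with $r>0$ in $\widetilde\R$. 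Thus $\widetilde x$ is in the closure of $\widetilde A$; since $\widetilde A$ is closed, $\widetilde x\in\widetilde A$, meaning the constant germ at $x$ has a representative with image in $A$, i.e. $x\in A$ — a contradiction. Therefore $A$ is closed in $B$.

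The main obstacle I anticipate is the first implication: the passage from "$\beta$ is in the topological closure of $\widetilde A$ in $\widetilde B$" to a genuine pointwise conclusion about a representative. One must be careful that closeness in $\widetilde\R^n$ is measured by infinitesimal radii in $\widetilde\R$, not by real radii, so the argument genuinely needs the trick of packaging $t\mapsto d(\beta(t),A)$ as a single element of $\widetilde\R$ and arguing by trichotomy (zero vs. positive) there, rather than a naive $\varepsilon$–$\delta$ estimate. A minor technical point to handle cleanly is reducing to the bounded case (so that distances and limits behave well) via a definable embedding of $\R^n$ into a sphere, as is done elsewhere in the paper; once that is in place the rest is routine o-minimal bookkeeping.
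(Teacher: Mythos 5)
Your first implication ($A$ closed in $B$ $\Rightarrow$ $\widetilde A$ closed in $\widetilde B$) is correct, and it is a somewhat different route from the paper's: the paper proves the equivalent statement for openness by constructing a radius germ $r(t)=\sup\{s:\ |x-\gamma(t)|<s\Rightarrow x\in A\}$, whereas you package $t\mapsto d(\beta(t),A)$ as a single element of $\widetilde{\R}$ and argue by trichotomy there. That works: by o-minimality a non-negative definable germ is either identically $0$ near $0$ or strictly positive near $0$, and in the positive case the ball of radius $\delta_A$ around $\beta$ misses $\widetilde A$, contradicting $\beta\in\overline{\widetilde A}$.

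The converse implication, however, has a genuine gap at the step where you claim that, since $|\gamma(t)-x|\to 0$ as $t\to 0^+$, the germ $\gamma$ ``lies in every ball $B(\widetilde x,r)$ with $r>0$ in $\widetilde{\R}$.'' Taken literally this says $|\gamma-\widetilde x|<r$ for \emph{every} positive $r\in\widetilde{\R}$, hence $|\gamma-\widetilde x|=0$ and $\gamma=\widetilde x$, which is absurd unless $x\in A$. What the convergence $|\gamma(t)-x|\to 0$ actually gives is only that $|\gamma-\widetilde x|$ is smaller than every positive \emph{constant} germ $\widetilde s$ with $s\in\R$, $s>0$; but the Euclidean topology of $\widetilde{\R}^n$ is generated by balls whose radii are arbitrary positive elements of $\widetilde{\R}$, and such a radius can be infinitesimal relative to $\R$ and even smaller than $|\gamma-\widetilde x|$ itself (e.g.\ $r=|\gamma-\widetilde x|^2$). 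So a single element of $\widetilde A$ cannot witness $\widetilde x\in\overline{\widetilde A}$; you must produce, for each positive $r\in\widetilde{\R}$, some element of $\widetilde A$ in $B(\widetilde x,r)$. This is exactly the point the paper's proof handles by reparametrization: given a representative $r:(0;\epsilon_1)\to\R$ of the radius, set $\alpha(t)=\sup\{s\in(0;t):\ |\gamma(s)-x|<r(t)/2\}$; then $\alpha$ is definable and strictly positive (since $\gamma(s)\to x$), may be taken continuous after shrinking $\epsilon_1$, the curve $\gamma\circ\alpha$ still has image in $A$, and $|\gamma\circ\alpha-\widetilde x|\le r/2<r$, so $\gamma\circ\alpha\in B(\widetilde x,r)\cap\widetilde A$. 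With this substitution your contradiction argument goes through.
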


\begin{proof} It is equivalent to prove that $A$ is open in $B$ if and
  only if $\widetilde A$ is open in $\widetilde B$.

Assume $A$ is open in $B$. Take $\gamma \in \widetilde A$. There
exists $\epsilon >0$ such that $\gamma$ is well defined on
$(0;\epsilon)$ and $\gamma((0;\epsilon))\subset A$. The openness of $A$ enables to construct a strictly positive definable function germ by
$$r(t)=\sup \{s \in (0;1] : ~~|x-\gamma(t)|<s \Rightarrow x\in A\}$$
for $t \in (0;\epsilon)$. Reducing $\epsilon$ if necessary, we can suppose $r$ continuous. Then the open ball $B(\gamma, r)$ is included in $\widetilde A$ and therefore $\widetilde A$ is open.

Conversely, to prove that $A$ is open in $B$ if $\widetilde A$ is open
in $\widetilde B$, let us assume that $A$ is not open at $a\in
A$. Then $a$ belongs to the closure of $B\setminus A$, therefore by
the curve selection lemma there exists $\gamma :[0;\epsilon) \to B$
continuous definable with $\gamma ((0;\epsilon)) \subset B\setminus A$
and $\gamma (0)=a$. If $\widetilde A$ is open, let $B(\widetilde
a;r)=\{\delta \in \widetilde B: |\delta-\widetilde a|<r\} \subset \widetilde A$
be an open neighbourhood of the constant germ $\widetilde a$ equal to
$a$, with $r:(0;\epsilon_1) \to \R$ a strictly positive definable
germ, and $0<\epsilon_1<\epsilon$. We get a contradiction if $\gamma
\in B(\widetilde a;r)$. Otherwise, we reparametrize $\gamma$ as
follows. Define $\alpha : (0;\epsilon_1)  \to \R$ by
$$\alpha (t)=\sup \{s\in (0;t) : |\gamma (s)-a|<\frac{r(t)}{2}\}$$
Then $\alpha$ is definable and strictly positive since $\gamma (0)=a$ and $\gamma$ is continuous. We can moreover assume that $\alpha$ is continuous, decreasing $\epsilon_1$ if 
necessary. Then $\gamma \circ \alpha ((0;\epsilon_1)) \subset B\setminus A$ and $\gamma \circ \alpha \in B(\widetilde a;r)$.
\end{proof}

\begin{lemma}\label{bounded} Let $A$ be a definable set in
  $\R^n$. Then $A$ is bounded if and only if $\widetilde A$ is bounded in $\widetilde \R^n$.
\end{lemma}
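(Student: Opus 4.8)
\textbf{Proof proposal for Lemma \ref{bounded}.}

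The plan is to prove both implications by elementary arguments about the Euclidean topology on $\widetilde{\R}$, using the curve selection lemma only for the less obvious direction. First I would treat the forward implication: if $A\subset\R^n$ is bounded, say $A\subset B(0,N)$ for some $N\in\R$ with $N>0$, then for any $\gamma\in\widetilde A$ there exists $\epsilon>0$ such that $\gamma$ is defined on $(0;\epsilon)$ with $\gamma((0;\epsilon))\subset A$, hence $|\gamma(t)|<N$ for all $t\in(0;\epsilon)$; since $\widetilde N\in\widetilde\R$ is the constant germ equal to $N$, this says precisely $|\gamma|<\widetilde N$ in $\widetilde\R$, so $\widetilde A\subset B(0,\widetilde N)$ is bounded. (Here I use the characterization of the ordering on $\widetilde\R$ recalled after the lemma defining $\widetilde\R$: a definable germ is positive iff it is eventually positive near $0$.)

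For the converse I would argue by contraposition: assume $A$ is unbounded and produce an element of $\widetilde A$ that is not bounded in $\widetilde\R^n$, i.e. whose norm exceeds every constant germ $\widetilde N$. The natural move is to pass to the one-point compactification picture already used in the excerpt: embed $\R^n$ into $\mathbb S^n\subset\R^{n+1}$ via a rational regular embedding, so that $A$ unbounded means the closure $\overline A$ in $\mathbb S^n$ meets the "point at infinity" $p_\infty$. By the curve selection lemma (Theorem \ref{csl}) there is a continuous definable $\sigma:[0;1)\to\mathbb S^n$ with $\sigma(0)=p_\infty$ and $\sigma((0;1))\subset A$. Composing with the inverse of the embedding gives a continuous definable germ $\gamma:(0;1)\to A$, hence $\gamma\in\widetilde A$, such that $\gamma(t)\to\infty$ as $t\to 0^+$ in the sense that $|\gamma(t)|$ is eventually larger than any real $N$. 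Then for every constant germ $\widetilde N$ we have $|\gamma|>\widetilde N$ in $\widetilde\R$ (again by the "eventually positive" characterization applied to $|\gamma|-\widetilde N$), so $\gamma$ lies in no ball $B(0,\widetilde N)$ with $N\in\R$; since such balls form a cofinal family for boundedness of subsets of $\widetilde\R^n$, the set $\widetilde A$ is unbounded.

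I expect the main subtlety to be purely bookkeeping rather than a genuine obstacle: one must make sure that "bounded in $\widetilde\R^n$" is correctly interpreted — a subset $S\subset\widetilde\R^n$ is bounded iff it is contained in some ball $B(0,r)$ with $r\in\widetilde\R$, $r>0$ — and then observe that it suffices to test against the cofinal subfamily of radii of the form $\widetilde N$ with $N\in\R$, because any positive germ $r\in\widetilde\R$ is dominated by some constant germ only when $r$ is itself bounded, whereas if $r$ is unbounded one still has $r>\widetilde N$ for all $N$ and the argument above still applies verbatim. Thus the one real point to be careful about is that "unbounded subset of $\widetilde\R^n$" is detected already by the constant radii, which is exactly what the curve $\gamma$ constructed above witnesses. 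With this remark in place both directions are immediate, and no appeal to \L ojasiewicz or to the o-minimal structure on $\widetilde\R$ is needed — only the order characterization of $\widetilde\R$ and the curve selection lemma in the original structure.
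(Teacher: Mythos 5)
Your forward direction is fine and is essentially the paper's argument. The converse, however, has a genuine gap, and it sits exactly at the point you dismiss as ``purely bookkeeping''. You claim that the constant radii $\widetilde N$, $N\in\R$, form a cofinal family among the positive elements of $\wR$, so that unboundedness of a subset of $\wR^n$ is already detected by constant radii. This is false: $\wR$ is a non-archimedean extension of $\R$ (for semialgebraic sets over $\mathbb R$ it is the field of algebraic Puiseux series), and the germ $t\mapsto 1/t$ exceeds every constant germ $\widetilde N$. Consequently the curve $\gamma\in\widetilde A$ you produce, with $|\gamma(t)|\to\infty$ as $t\to 0^+$, only shows that $\widetilde A$ is contained in no ball of \emph{constant} radius; this is perfectly compatible with $\widetilde A\subset B(0,b)$ for some infinitely large $b\in\wR$ (indeed $|\gamma|$ is itself an element of $\wR$, and the singleton $\{\gamma\}$ is a bounded subset of $\wR^n$). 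So your single witness does not prove that $\widetilde A$ is unbounded in the sense actually used here, where a bound may be an arbitrary positive $b\in\wR$.

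The missing step is the one the paper supplies by reparametrization. Normalize the selected curve so that $|\gamma(t)|=1/t$ (Lemma \ref{repara}). If $b\in\wR$ were a bound for $\widetilde A$, then $b$ must exceed every constant germ, because $A$ unbounded puts constant arcs of arbitrarily large norm into $\widetilde A$; hence $1/b$ tends to $0$ and $\beta=\gamma\circ\frac{1}{b}$ is again a well-defined element of $\widetilde A$ with $|\beta|=b$, contradicting $|\beta|<b$. In other words, you must precompose the selected curve with a definable change of parameter depending on the candidate bound $b$, rather than compare against constants. With that addition your argument coincides with the paper's proof.
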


\begin{proof} Assume $A$ is bounded in $\R^n$. Then there exist $c_1,c_2 \in \R$ such that $A\subset [c_1;c_2]^n$. For $\gamma \in \widetilde A$, there exists $\epsilon$ such that $\gamma (t) \in  [c_1;c_2]^n$ for $t \in (0;\epsilon)$. In particular $\gamma \in [\widetilde c_1;\widetilde c_2]^n$ and $\widetilde A$ is bounded.


Assume that $A$ is not bounded whereas $\widetilde A$ is bounded, namely there exists $b \in \widetilde R$ such that $|\gamma| < b$ for any $\gamma \in \widetilde A$. Since $A$ is not bounded, there exists by Theorem \ref{csl} a curve $\gamma:(0,\eps)\to A$ which is not bounded. By reparametrisation via Lemma \ref{repara}, we may assume that $|\gamma(t)|=1/t$. As a consequence $\beta=\gamma \circ \frac{1}{b}$ is defined on $(0,\epsilon')$ for some $0<\epsilon'<\epsilon$, satisfies $\beta \in \widetilde A$ and $|\beta|=|b|$ and therefore $\beta$ contradicts our assumptions.
\end{proof}

The injectivity and surjectivity of a definable map are also preserved under the extension of fields.

\begin{lemma} Let $f: X \to Y$ be a definable map. Then $f$ is injective if and only if $\widetilde f$ is injective. Similarly, $f$ is surjective if and only if $\widetilde f$ is surjective.
\end{lemma}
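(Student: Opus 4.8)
The plan is to prove the four implications separately; three are routine and only ``$f$ surjective $\Rightarrow$ $\widetilde f$ surjective'' carries any content. The two devices that handle the easy directions are the constant germ $\widetilde x\in\widetilde X$ attached to a point $x\in X$ and the evaluation $\gamma\mapsto\gamma(t_0)$ of a germ at a small positive parameter, which together move statements freely between $X$ and $\widetilde X$.

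For injectivity: if $f$ is injective and $\widetilde f(\gamma)=\widetilde f(\delta)$ with $\gamma,\delta\in\widetilde X$, then $f\circ\gamma$ and $f\circ\delta$ coincide on a punctured neighbourhood of $0$, hence so do $\gamma$ and $\delta$, i.e. $\gamma=\delta$ as germs. Conversely, if $\widetilde f$ is injective and $f(a)=f(b)$ for $a,b\in X$, then applying $\widetilde f$ to the constant germs gives $\widetilde f(\widetilde a)=\widetilde{f(a)}=\widetilde{f(b)}=\widetilde f(\widetilde b)$, so $\widetilde a=\widetilde b$ and therefore $a=b$. For the easy half of surjectivity: if $\widetilde f$ is surjective and $y\in Y$, pick $\gamma\in\widetilde X$ with $\widetilde f(\gamma)=\widetilde y$; then $f(\gamma(t))=y$ for all small $t>0$, so $\gamma(t)$ is a preimage of $y$.

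The substantive point is that $f$ surjective forces $\widetilde f$ surjective, i.e. every continuous definable arc $\gamma:(0;\epsilon)\to Y$ lifts through $f$ to a continuous definable arc in $X$ defined near $0$. The cleanest route reads this off from part (ii) of the Proposition in section \ref{defin}: writing $p:\R^n\times\R^m\to\R^m$ for the coordinate projection, we have $f(X)=p(\graph f)$, and, exactly as in the lemma identifying $\graph\widetilde f$ with $\widetilde{\graph f}$ (now invoking $\widetilde{X\times Y}=\widetilde X\times\widetilde Y$ from part (i)), $\widetilde{\graph f}=\graph\widetilde f$; hence if $f$ is surjective then
$$\widetilde Y=\widetilde{f(X)}=\widetilde{p(\graph f)}=\widetilde p(\widetilde{\graph f})=\widetilde p(\graph\widetilde f)=\widetilde f(\widetilde X),$$
which is the surjectivity of $\widetilde f$. (Part (ii) is stated for forgetting the last coordinate, but iterating it handles any coordinate projection.)

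A more self-contained argument instead redoes the proof of part (ii) in this setting: after embedding $\R^n$ in a sphere one may assume $X$ bounded; then $D=\{(t,x)\in(0;\epsilon)\times X:f(x)=\gamma(t)\}$ is a nonempty bounded definable set, its first projection has closure containing $(0;\epsilon)$ so $\overline D$ meets $\{0\}\times\R^n$, the curve selection lemma (Theorem \ref{csl}) produces a continuous definable curve into $\overline D$ through such a boundary point with values in $D$ for positive parameter, and Lemma \ref{repara} reparametrises it by its first component so that its $X$-component $\mu$ satisfies $f\circ\mu(t)=\gamma(t)$ near $0$; then $\mu\in\widetilde X$ and $\widetilde f(\mu)=\gamma$. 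In either approach the only real obstacle is this lifting step: since $f$ need not be continuous and its fibres may be large, pointwise inversion does not yield an arc, so one must appeal to curve selection — equivalently, to definable choice together with the monotonicity theorem — to produce a genuine arc, and then reparametrise it into a germ at $0$.
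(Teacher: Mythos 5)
Your proof is correct and, in its self-contained version, is essentially identical to the paper's: the three easy implications via constant germs and evaluation, and the lifting step via the bounded incidence set $D$, curve selection, and reparametrisation by Lemma \ref{repara} are exactly what the paper does. Your first route for the hard direction --- writing $f(X)=p(\graph f)$ and invoking $\widetilde{p(C)}=\widetilde p(\widetilde C)$ from the earlier Proposition --- is a legitimate and tidier packaging (modulo the harmless point that one must compose with a coordinate permutation, itself clearly compatible with $\widetilde{\,\cdot\,}$, to project onto the $Y$-factor rather than the first coordinates), but it buys nothing new since part (ii) of that Proposition was proved by the very same curve-selection argument.
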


\begin{proof} Assume $f$ is injective. Let $\gamma_1,\gamma_2 \in
  \widetilde X$ satisfies $\widetilde f (\gamma_1)=\widetilde f (\gamma_2)$. There exists $\epsilon >0$ such that $\gamma_1$ and
  $\gamma_2$ are defined on $(0;\epsilon)$ and then for $t\in
  (0;\epsilon)$ we have $f(\gamma_1(t))=f(\gamma_2(t))$. Therefore
  $\gamma_1(t)=\gamma_2(t)$ for $t\in
  (0;\epsilon)$ by injectivity of $f$ and $\widetilde f$ is injective.

Assume $\widetilde f$ injective. Let $x_1,x_2 \in X$ satisfy
$f(x_1)=f(x_2)$. If $\widetilde x_i$ denotes the constant curve equal
to $x_i$, for $i\in \{1,2\}$, then $\widetilde f (\widetilde
x_1)=\widetilde f (\widetilde x_1)$. Therefore $\widetilde
x_1=\widetilde x_2$ and thus $x_1=x_2$ and $f$ is injective.

Assume now that $f$ is surjective. Let $\gamma \in \widetilde Y$ and
take $\epsilon >0$ such that $\gamma$ is defined and continuous on
$(0;\epsilon)$. Consider the set
$$A=\{(x,t)\in f^{-1}(\gamma((0;\epsilon)))\times (0;\epsilon) : f(x)=\gamma(t)\}.$$
Then $A$ is a definable set, and we may assume $A$ is bounded
(embedding the ambient space $\R^n$ of $X$ in $\mathbb S^n$ if
necessary), so that $(\overline A\setminus A) \cap (\R^n \times
\{0\})$ is not empty. By the curve selection lemma, there exists
$\delta =(\delta_1,\delta_2) \in \widetilde A$, defined on
$(0;\epsilon')$ where $\epsilon'<\epsilon$, such that $\delta
((0;\epsilon'))\subset  A$ and $\delta$ admits a limit at $0$, denoted
by $\delta(0)$, and $\delta(0) \in (\overline A \setminus A)\cap (\R^n \times \{0\})$. Then $f\circ \delta_1 (t)=\gamma (\delta_2(t))$ by definition of $A$. Denote by $\alpha \in \wR$ a continuous definable function germ such that $\delta_2\circ \alpha (t)=t$, obtained by Lemma \ref{repara}. Then $\delta_1 \circ \alpha \in \widetilde X$ is an inverse of $\gamma$ by $\widetilde f$ and $\widetilde f$ is surjective.

Assume finally that $\widetilde f$ is surjective. Take $y\in
Y$. Then the constant curve $\widetilde y$ equal to $y$ has a pre-image
$\gamma$ by $\widetilde f$. There exists $\epsilon >0$ such that
$\gamma$ is well defined on $(0;\epsilon)$,
$\gamma((0;\epsilon))\subset X$ and
$f(\gamma(t))=y$. Therefore $f$ is surjective.
 
\end{proof}


\subsection{Continuity}\label{conti}
Let $\R$ be a real closed field and consider an o-minimal structure
expanding $\R$. A definable mapping $f: X \to Y$ between definable sets
$X,Y$ is said continuous if it is continuous for the Euclidean
topology inherited to the real closed field.

\begin{prop}\label{cont} Let $f: X \to Y$ be definable and $Y$ be bounded. Then
  $f$ is continuous if and only if $\graph f$ is closed in $X \times
  \overline Y$.
\end{prop}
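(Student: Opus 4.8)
The plan is to prove the standard topological characterization: in a reasonable ambient setting, a map is continuous precisely when its graph is closed. One direction is soft and holds very generally; the other direction is where the hypotheses ($f$, $Y$ definable, $Y$ bounded, so that $\overline Y$ is closed and bounded) will be used, via o-minimal tameness and the curve selection lemma.

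First I would handle the easy implication. Suppose $f$ is continuous. To see that $\graph f$ is closed in $X \times \overline Y$, take a point $(x_0,y_0)$ in the closure of $\graph f$ inside $X \times \overline Y$; in particular $x_0 \in X$. Over a real closed field one cannot argue directly with sequences, so instead I would invoke the curve selection lemma (Theorem \ref{csl}): since $\graph f$ is a definable set and $(x_0,y_0) \in \overline{\graph f}$, there is a continuous definable arc $\lambda : [0;1) \to X \times \overline Y$ with $\lambda(0) = (x_0,y_0)$ and $\lambda((0;1)) \subset \graph f$. Write $\lambda(s) = (\xi(s), \eta(s))$ with $\eta(s) = f(\xi(s))$ for $s \in (0;1)$; by continuity of $f$ and of $\xi$, $\eta(s) = f(\xi(s)) \to f(x_0)$ as $s \to 0^+$, while $\eta(s) \to y_0$ by continuity of $\lambda$. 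Hence $y_0 = f(x_0)$ and $(x_0,y_0) \in \graph f$, so $\graph f$ is closed in $X \times \overline Y$.

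For the converse, suppose $\graph f$ is closed in $X \times \overline Y$ but $f$ fails to be continuous at some $x_0 \in X$. The negation of continuity should be captured definably: there is a definable set on which $|f(x) - f(x_0)|$ stays bounded away from $0$ while $x$ approaches $x_0$. Concretely, since $f$ is not continuous at $x_0$, there exists $r > 0$ in $\R$ such that the definable set $A = \{ x \in X : |f(x) - f(x_0)| \geq r \}$ has $x_0$ in its closure. Apply the curve selection lemma to $A$ and the point $x_0 \in \overline A$: we get a continuous definable arc $\gamma : [0;1) \to X$ with $\gamma(0) = x_0$ and $\gamma((0;1)) \subset A$. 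Now consider the arc $s \mapsto f(\gamma(s))$ in $Y \subset \overline Y$. Since $\overline Y$ is closed and bounded, by o-minimality (monotonicity/the fact that a bounded definable curve has a limit at an endpoint — this is again immediate from the cell decomposition for functions of one variable, or one may pass to a subinterval on which each coordinate is monotone) we may shrink the interval so that $\lim_{s\to 0^+} f(\gamma(s)) = y_1$ exists in $\overline Y$. Then $(\gamma(s), f(\gamma(s))) \in \graph f$ for $s \in (0;1)$ and this converges to $(x_0, y_1) \in X \times \overline Y$; since $\graph f$ is closed, $(x_0,y_1) \in \graph f$, i.e. $y_1 = f(x_0)$. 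But $|f(\gamma(s)) - f(x_0)| \geq r$ on $(0;1)$, so $|y_1 - f(x_0)| \geq r > 0$, a contradiction. Hence $f$ is continuous.

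The main obstacle, and the place where the hypotheses really enter, is guaranteeing that the bounded definable one-variable curve $s \mapsto f(\gamma(s))$ actually has a limit in $\overline Y$ at $0$: this is exactly why $Y$ is assumed bounded (so $\overline Y$ is closed and bounded and the limit lands in it) and why we work with $\overline Y$ rather than $Y$ in the statement. This step uses only the o-minimality of the structure (monotonicity theorem for definable functions of one variable), so it is not genuinely hard, but it is the one point that fails without the boundedness assumption — without it the curve could escape to infinity, producing no limit point in the ambient space, and the graph could be closed in $X \times Y$ while $f$ is discontinuous. I would make sure to state clearly that, as in the earlier arguments of the section, one may always reduce to the bounded case by an embedding of $\R^n$ into $\mathbb{S}^n$ when convenient, but here the statement is phrased with $\overline Y$ precisely to keep track of the limit.
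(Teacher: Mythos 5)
Your proof is correct and follows essentially the same route as the paper: the curve selection lemma in both directions, with the boundedness of $Y$ (via the monotonicity theorem) used to extract a limit point $(x_0,y_1)$ of the graph over the point of discontinuity. Your definable witness of discontinuity, the set $\{x\in X : |f(x)-f(x_0)|\geq r\}$ accumulating at $x_0$, is in fact a slightly cleaner choice than the one used in the paper, but the argument is the same.
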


\begin{proof} Assume $f$ is continuous. Let $(a,b)$ be in the closure of the graph of $f$ in $X\times \overline Y$. By the curve selection lemma there exists a continuous curve $\gamma=(\gamma_1, \gamma_2): [0;1] \to X\times \overline Y$ with $\gamma (0)=(a,b)$ and $\gamma ((0;1]) \subset \graph f$. Then for $t\in (0;1]$ we have $f\circ \gamma_1(t)=\gamma_2(t)$. By continuity we obtain, passing to the limit as $t$ goes to zero, that $f(a)=b$. In particular the graph of $f$ is closed in $X\times \overline Y$.

Conversely, assume $f$ is not continuous at $a\in X$. For $\epsilon >0$, define $\delta_{\epsilon}$ to be the supremum on $x\in X$ with $|x-a|\leq \epsilon$ of $|f(x)-f(a)|$. Then the pair $(0,a)$ is in the closure of the definable set
$$\cup_{\epsilon >0} \{\epsilon \}\times \{x\in X: |x-a| \leq \epsilon \textrm{ and } |f(x)-f(a)| \geq \frac{\delta_{\epsilon}}{2}\}.$$
By the o-minimal curve selection lemma, there exists a continuous
definable curve $\gamma: [0;1] \to X$ such that $\gamma(0)=a$ and $f \circ \gamma $ is not continuous at $0$.

Denote by $\Gamma$ the image $\gamma ((0;1])$ of $\gamma$ and by $A$ the set
$$\{(x,f(x)) : x \in \Gamma \} \subset X\times Y.$$
Then $\overline A \setminus A$ is not empty because $Y$ is bounded. It is then equal to a point of the form $(a,b)$ with $b \neq f(a)$, therefore the graph of $f$ is not closed in $X\times \overline Y$.
\end{proof}

\begin{cor}\label{coro-main} Let $f: X \to Y$ be a continuous definable map. Then
  $\widetilde f: \widetilde X \to \wY$ is continuous.
\end{cor}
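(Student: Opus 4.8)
The plan is to deduce this from Proposition \ref{cont} applied twice: once over $\R$ to $f$ itself, and once over $\wR$ to $\widetilde f$ inside the o-minimal structure $\mathcal S$ constructed in Section \ref{top}. The bridge between the two applications is Lemma \ref{closed} (closedness of a definable set is preserved by the operation $X\mapsto\widetilde X$), together with the two identities $\widetilde{\graph f}=\graph\widetilde f$ and $\widetilde{A\times B}=\widetilde A\times\widetilde B$ proved in Section \ref{top}. Before doing this, I would reduce to the case where $Y$ is bounded: embedding $\R^n$ into $\mathbb S^n\subset\R^{n+1}$ by a rational regular embedding $e$, one replaces $f$ by $e\circ f:X\to e(Y)$, whose codomain is bounded, and checks that this replacement affects the continuity neither of $f$ nor of $\widetilde f$, since $e$ is a definable homeomorphism onto its image and is compatible with the operation $\cdot\,\widetilde{}\,$.

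So assume $Y\subset\R^n$ is bounded. Since $f$ is continuous, Proposition \ref{cont} gives that $\graph f$ is closed in $X\times\overline Y$. Applying Lemma \ref{closed} to $\graph f\subset X\times\overline Y$ and using $\widetilde{X\times\overline Y}=\wX\times\widetilde{\overline Y}$, we get that $\graph\widetilde f=\widetilde{\graph f}$ is closed in $\wX\times\widetilde{\overline Y}$. Now $\widetilde{\overline Y}$ is closed in $\wR^n$ by Lemma \ref{closed} and contains $\wY$, hence $\overline{\wY}\subset\widetilde{\overline Y}$ (closures taken in $\wR^n$). Since $\graph\widetilde f$ is contained in $\wX\times\wY\subset\wX\times\overline{\wY}$ and is closed in the larger space $\wX\times\widetilde{\overline Y}$, it is closed in the subspace $\wX\times\overline{\wY}$ as well.

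It then remains to run Proposition \ref{cont} backwards over $\wR$. The map $\widetilde f:\wX\to\wY$ is definable in $\mathcal S$ (its graph is $\widetilde{\graph f}$, which is $0$-definable by the description of $\mathcal S$ in Section \ref{top}), and $\wY$ is bounded by Lemma \ref{bounded}. Hence Proposition \ref{cont} applies over $\wR$ and says that $\widetilde f$ is continuous if and only if $\graph\widetilde f$ is closed in $\wX\times\overline{\wY}$ — which is exactly what was just established. Therefore $\widetilde f$ is continuous.

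The bulk of the write-up is just bookkeeping with the commutation properties of $\cdot\,\widetilde{}\,$; the genuinely delicate point is the reduction to bounded $Y$, since Proposition \ref{cont} is false without a boundedness (properness) assumption — for instance $x\mapsto1/x$ extended by $0$ has closed graph in $\R\times\R$ but is discontinuous — so one must check carefully that compactifying the target does not disturb the arc space or the induced map. The second point worth isolating is the inclusion $\overline{\wY}\subset\widetilde{\overline Y}$, which is exactly what makes the two applications of Proposition \ref{cont} match up; it is immediate from Lemma \ref{closed}, but it is the step where the topology, rather than pure set-theoretic manipulation, enters.
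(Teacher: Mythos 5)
Your proof is correct and follows essentially the same route as the paper: reduce to bounded $Y$ via the sphere embedding, apply Proposition \ref{cont} over $\R$, transfer closedness of the graph with Lemma \ref{closed}, and apply Proposition \ref{cont} backwards over $\wR$. The only (harmless) difference is that the paper invokes the equality $\overline{\wY}=\widetilde{\overline Y}$ outright, whereas you use only the inclusion $\overline{\wY}\subset\widetilde{\overline Y}$ plus a subspace-closedness argument, which is if anything slightly cleaner.
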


\begin{proof}
Without loss of generality we can assume that $Y$ is bounded (as
$Y\subset \R^n \subset \mathbb S^{n}\subset \R^{n+1}$). Now the set
$\widetilde Y$ is bounded by Lemma \ref{bounded}, and moreover
$\overline {\widetilde Y}=\widetilde {\overline Y}$. Finally the graph of $\widetilde f$ is closed in $\widetilde X \times \overline {\widetilde Y}$ by Lemma \ref{closed} since the graph of $f$ is closed in $X \times
  \overline Y$ by Proposition \ref{cont}.
\end{proof}

\begin{rmk} In a model-theoretic approach, we can deduce the continuity of $\widetilde f$ using the fact that the extension $\widetilde R$ is an elementary extension of $\R$ (cf. \cite{Coste}, p58).
\end{rmk}

\begin{ex}\label{exam} As a illustrative example, consider the Whitney umbrella $W$ in $\mathbb R^3$ defined by $x^2z=y^2$. Its two-dimensional part is the image of $\mathbb R^2$ by the map $h:(u,v)\mapsto (u,uv,v^2)$, giving a resolution of the singularities of $W$. Any positive element of the $z$-axis has two pre-images by $h$, and so $h$ defines a semi-algebraic bijection between the semi-algebraic sets $(\mathbb R^2\setminus \{u=0\}) \cup \{(0,0)\}$ and $(W\setminus \{x=y=0\}) \cup \{(0,0,0)\}$, which are not even locally closed. Corollary \ref{coro-main} enables to assert than nevertheless, $h$ induces a homeomorphism between the associated spaces of arcs.
\end{ex}

We recover the results in section \ref{res-t} as a consequence of Corollary \ref{coro-main} by considering the field of real numbers with the o-minimal structure given by globally subanalytic sets. For instance for Theorem \ref{thm-R}:

\begin{cor} Let $h: \mathbb R^n \to \mathbb R^n$ be a
  subanalytic homeomorphism. Then the map $\widetilde h:\mathbb R \{t^{1/\infty}\}^n \to \mathbb R \{t^{1/\infty}\}^n$ is a homeomorphism with respect to the $t$-adic topology. If moreover we assume that $h$ and $h^{-1}$ are arc-analytic, then $h_*:\mathcal A_0(\mathbb R^n) \to \mathcal A_0(\mathbb R^n)$ is a homeomorphism.
\end{cor}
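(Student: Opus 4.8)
The plan is to deduce this corollary directly from Corollary \ref{coro-main} by specialising the o-minimal structure to the globally subanalytic sets over $\mathbb R$. First I would recall that, with this choice of structure, the construction of section \ref{defin} yields $\wR=\mathbb R\{t^{1/\infty}\}$, the field of convergent Puiseux series, and that its Euclidean topology coincides with the $t$-adic topology by the remark following Example \ref{ex}. Componentwise this identifies $\widetilde{\mathbb R^n}$ with $\mathbb R\{t^{1/\infty}\}^n$ and its Euclidean topology with the product of the $t$-adic topologies, i.e. the natural $t$-adic topology on the module of Puiseux arcs.

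Next I would observe that a subanalytic homeomorphism $h:\mathbb R^n\to\mathbb R^n$ is in particular globally subanalytic after the standard compactification trick (embed $\mathbb R^n\hookrightarrow\mathbb S^n\subset\mathbb R^{n+1}$ via a rational regular embedding), hence definable in the globally subanalytic structure, and continuous. Applying Corollary \ref{coro-main} to $f=h$ gives that $\widetilde h:\widetilde{\mathbb R^n}\to\widetilde{\mathbb R^n}$ is continuous; applying it to $h^{-1}$, which is again a subanalytic homeomorphism, shows $\widetilde{h^{-1}}$ is continuous. Since $\widetilde{h^{-1}\circ h}=\widetilde{h^{-1}}\circ\widetilde h=\widetilde{\id}=\id$ and likewise on the other side (functoriality of the $\,\widetilde{\cdot}\,$ construction on composites, which is immediate from the defining formula $\widetilde f(\gamma)=f\circ\gamma$), the map $\widetilde h$ is a homeomorphism of $\mathbb R\{t^{1/\infty}\}^n$ for the $t$-adic topology.

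Finally I would address the arc-analytic refinement. Assuming in addition that $h$ and $h^{-1}$ are arc-analytic, $h$ sends a convergent analytic arc germ $\gamma\in\mathcal A_0(\mathbb R^n)$ to an analytic arc germ, so $h_*$ is a well-defined self-map of $\mathcal A_0(\mathbb R^n)$ (cf. section \ref{res-t}), and likewise for $h^{-1}$, so $h_*$ is a bijection. The point is that $\mathcal A_0(\mathbb R^n)$, with its $t$-adic topology, embeds as a topological subspace of $\mathbb R\{t^{1/\infty}\}^n$ (an analytic arc being in particular a convergent Puiseux arc, and the $t$-adic topology on $\mathcal A_0(\mathbb R^n)$ being precisely the one induced from the Euclidean topology of $\mathbb R\{t^{1/\infty}\}$, as already noted in section \ref{sect-bah}), and on this subspace $\widetilde h$ restricts to $h_*$. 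Continuity of $h_*$ is then inherited from continuity of $\widetilde h$ by restriction to the subspace; the same argument applied to $h^{-1}$ gives continuity of the inverse $(h^{-1})_*=(h_*)^{-1}$, so $h_*$ is a homeomorphism of $\mathcal A_0(\mathbb R^n)$.

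I expect the only genuine subtlety to be the bookkeeping around the compactification that makes $h$ globally subanalytic: one must check that replacing $h$ by its conjugate under a rational regular embedding $\mathbb R^n\hookrightarrow\mathbb S^n$ does not disturb the identification of $\widetilde{\mathbb R^n}$ with $\mathbb R\{t^{1/\infty}\}^n$ nor the arc spaces, which follows because such embeddings and their inverses are themselves (semi)algebraic, hence induce the expected maps on germs. Everything else is a formal transfer: the identification $\wR=\mathbb R\{t^{1/\infty}\}$, the coincidence of topologies, functoriality of $\,\widetilde{\cdot}\,$, and restriction to the subspace $\mathcal A_0(\mathbb R^n)$.
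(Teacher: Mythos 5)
Your overall route is exactly the paper's: specialise Corollary \ref{coro-main} to the globally subanalytic structure over $\mathbb R$, identify $\widetilde{\mathbb R}$ with $\mathbb R\{t^{1/\infty}\}$ and its Euclidean topology with the $t$-adic one, apply the corollary to $h$ and to $h^{-1}$, and obtain $h_*$ by restricting $\widetilde h$ to the topological subspace $\mathcal A_0(\mathbb R^n)\subset \mathbb R\{t^{1/\infty}\}^n$, using arc-analyticity of $h$ and $h^{-1}$ for well-definedness. The paper's own proof is precisely this, compressed into two lines.

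The one step that does not survive scrutiny as written is your claim that a subanalytic homeomorphism $h:\mathbb R^n\to\mathbb R^n$ becomes globally subanalytic after the compactification $\mathbb R^n\hookrightarrow\mathbb S^n$. That is false in general: $x\mapsto x+\tfrac12\sin x$ is an analytic, hence subanalytic, homeomorphism of $\mathbb R$ whose graph is not subanalytic at infinity, and conjugating by a rational regular embedding does not change that. So Corollary \ref{coro-main} cannot be applied to $h$ in one piece; the issue is not the bookkeeping of the embedding but the possible failure of definability of $h$ itself in the globally subanalytic structure. The repair is the localisation the paper already invokes for Theorem \ref{thm-R}: both the definition of $\widetilde h(\gamma)=h\circ\gamma$ and continuity at $\gamma$ depend only on $h$ near the origin $\gamma(0)$ of the arc, and the restriction of $h$ to a bounded ball around $\gamma(0)$ \emph{is} globally subanalytic (a relatively compact subanalytic set is subanalytic at infinity). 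Applying Corollary \ref{coro-main} to these restrictions gives continuity of $\widetilde h$ at every $\gamma$, and the remainder of your argument (bijectivity via $h^{-1}$, restriction to $\mathcal A_0(\mathbb R^n)$) goes through unchanged.
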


\begin{proof} The continuity of $\widetilde h$ follows from Corollary \ref{coro-main}. Moreover $h_*$ is well-defined by arc-analyticity of $h$ and $h^{-1}$, and so $h_*$ is continuous by restriction.
\end{proof}

\begin{prop}\label{unif} Let $X$ and $Y$ be bounded and closed definable sets and $f:X \to Y$ be a continuous definable function. Then $f$ is uniformly continuous.
\end{prop}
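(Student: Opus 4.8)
The plan is to argue by contradiction, using that in an o-minimal structure a closed and bounded definable set behaves, for the purpose of taking limits of definable curves, like a compact set even though it need not be compact. The only non-elementary input I need is that a bounded definable curve $(0;\delta_0)\to\R^N$ has a limit in $\R^N$ as $\delta\to0^+$; this follows from the monotonicity theorem (cf.\ \cite{vdd}) together with the fact that a bounded definable subset of $\R$ attains its supremum and infimum in $\R$.

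First I would negate uniform continuity: there is $\epsilon_0>0$ in $\R$ such that for every $\delta\in(0;1)$ there are points $x,y\in X$ with $|x-y|<\delta$ and $|f(x)-f(y)|\geq\epsilon_0$. Then I would form the definable set
\[
S=\{(\delta,x,y)\in(0;1)\times X\times X:\ |x-y|<\delta\ \text{ and }\ |f(x)-f(y)|\geq\epsilon_0\},
\]
which, since $X$ is bounded, is contained in the bounded set $(0;1)\times X\times X$, and which projects onto all of $(0;1)$ via the first coordinate. Using definable choice (definable Skolem functions, available in any o-minimal structure expanding an ordered field), I would select a definable section $\delta\mapsto(\delta,p(\delta),q(\delta))$ of this projection.

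By the monotonicity theorem, $p$ and $q$ are continuous on some $(0;\delta_0)$, and being bounded definable curves they have limits $a=\lim_{\delta\to0^+}p(\delta)$ and $b=\lim_{\delta\to0^+}q(\delta)$; since $X$ is closed, $a,b\in X$. From $|p(\delta)-q(\delta)|<\delta$ and $\delta\to0^+$ I get $|a-b|\le0$, so $a=b$. Now continuity of $f$ at the point $a$ forces $f\circ p$ and $f\circ q$ to converge to the common value $f(a)$ as $\delta\to0^+$, whence $|f(p(\delta))-f(q(\delta))|\to0$; this contradicts $|f(p(\delta))-f(q(\delta))|\geq\epsilon_0$ for all $\delta\in(0;\delta_0)$, and therefore $f$ is uniformly continuous.

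The hard part — the only part that is not a routine unwinding of definitions — is the passage from $S$ to a boundary point $(0,a,b)$ lying over $\delta=0$: this is precisely where the failure of ordinary compactness in a general real closed field must be circumvented, and the replacement is the o-minimality of the structure (equivalently, the definable compactness of closed and bounded definable sets). One may also repackage this step through the curve selection lemma (Theorem \ref{csl}) applied at $(0,a,b)\in\overline S$, using that the image of a closed and bounded definable set under a continuous definable map is again closed and bounded; this yields continuous definable curves $\gamma_1,\gamma_2:[0;1)\to X$ with $\gamma_1(0)=\gamma_2(0)=a$, $|\gamma_1(t)-\gamma_2(t)|\to0$ and $|f(\gamma_1(t))-f(\gamma_2(t))|\geq\epsilon_0$ for $t>0$, to which the same final contradiction applies.
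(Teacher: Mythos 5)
Your proof is correct. It takes a genuinely different route from the paper's. The paper introduces the pointwise modulus-of-continuity function $g(x)=\sup\{s\in(0;1]: a\in X,\ |x-a|<s\Rightarrow|f(x)-f(a)|<\epsilon\}$, applies the curve selection lemma to the definable set $\{(t,x):g(x)<t\}$ to produce a definable curve $\delta_2(t)\to x_0\in X$ along which $g$ tends to $0$, and gets the contradiction from the Lebesgue-number-type estimate $g(x)\geq g(x_0)/2$ for $x$ near $x_0$. You instead run the classical Heine--Cantor argument on pairs: definable choice extracts a definable curve of bad pairs $(p(\delta),q(\delta))$, the monotonicity theorem plus closedness and boundedness of $X$ give a common limit $a=b\in X$, and continuity of $f$ at that single point yields the contradiction. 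Both arguments rest on the same o-minimal surrogate for compactness, namely that bounded definable curves have limits and that closed bounded definable sets contain them. What your version buys is that it avoids the auxiliary function $g$ (whose definability, and the existence in $\R$ of the supremum defining it, need a word over a general real closed field), at the price of invoking definable choice, a tool the paper never uses -- it works exclusively with the curve selection lemma (Theorem \ref{csl}). Your closing remark correctly shows how to trade definable choice back for curve selection; as phrased it presupposes the point $(0,a,b)$, but the fix you cite is the right one: the projection of the closed bounded definable set $\overline S$ onto the $\delta$-axis is closed and contains $(0;1)$, hence contains $0$, so $\overline S$ meets $\{0\}\times X\times X$ and curve selection applies there.
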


\begin{proof} Let $\epsilon \in \R$ such that $\epsilon >0$. As $f$ is continuous, we may define for $x \in X$ a definable function $g$ on $X$ by
$$g(x)=\sup \{ s \in (0;1]: a\in X, |x-a|<s \Rightarrow |f(x)-f(a)|<\epsilon \}.$$
We want to prove that there exists $r>0$ such that $g\geq r$.
Assume it is not true. Then
$$A=\{(t,x)\in (0;1]\times X : g(x)< t\}$$ 
is a non empty definable subset of $[0;1]\times X$, with a non empty
boundary. Therefore there exists a continuous definable function germ
$\delta=(\delta_1,\delta_2) :[0;\epsilon) \to [0;1]\times X$ with $\delta((0;\epsilon))\subset A
$ and $\delta_1(0)=0$ by the curve selection lemma. Note that $\delta_2 (t)$ admits a limit $x_0$ as $t$ goes to zero since $X$ is bounded, and $x_0$ belongs to $X$ since $X$ is closed.

For $x\in X$ such that $|x-x_0|<g(x_0)/2$ and $y\in X$ such that $|y-x|<g(x_0)/2$, we have $|y-x_0|<g(x_0)$ so that $|f(y)-f(x_0)|<\epsilon$. In particular $g(x)\geq g(x_0)/2$. For $t$ small enough we have $|\delta_2(t)-x_0|<g(x_0)/2$ since $\delta_2$ goes to $x_0$, so that 
$$g(x_0)/2 \leq g\circ \delta_2(t) < \delta_1(t)$$ 
for $t$ small enough, in contradiction with the fact that $\delta_1(0)=0$.
\end{proof}

We recover by this way Theorem \ref{thm-comp}.

\begin{cor}\label{cor-last} Let $M\subset \mathbb R^m$ and $N\subset \mathbb R^n$ be compact analytic manifolds and $h:M \to N$ be a subanalytic homeomorphism. Then $\widetilde h: \widetilde M \to \widetilde N$ is a uniformly continuous homeomorphism. If moreover $h$ and $h^{-1}$ are arc-analytic, then $h_*: \mathcal A(M) \to \mathcal A(N)$ is a uniformly continuous homeomorphism.
\end{cor}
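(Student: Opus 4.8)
The plan is to deduce everything from the results already established for base change of definable maps, using only that a compact analytic manifold is a closed and bounded globally subanalytic set. Work in the o-minimal structure of globally subanalytic subsets of $\mathbb R$, so that $\widetilde{\mathbb R}=\mathbb R\{t^{1/\infty}\}$. The manifolds $M\subset\mathbb R^m$ and $N\subset\mathbb R^n$ are closed and bounded definable sets, and a subanalytic homeomorphism $h:M\to N$ is a continuous definable map whose inverse $h^{-1}$ has the flipped graph, hence is again continuous and definable. Corollary~\ref{coro-main} then applies to both $h$ and $h^{-1}$ and yields that $\widetilde h:\widetilde M\to\widetilde N$ and $\widetilde{h^{-1}}:\widetilde N\to\widetilde M$ are continuous. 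Since the assignment $X\mapsto\widetilde X$, $f\mapsto\widetilde f$ is functorial -- directly from $\widetilde f(\gamma)(t)=f\circ\gamma(t)$ one reads off $\widetilde{g\circ f}=\widetilde g\circ\widetilde f$ and $\widetilde{\mathrm{id}}=\mathrm{id}$ -- we get $\widetilde h\circ\widetilde{h^{-1}}=\widetilde{\mathrm{id}_N}=\mathrm{id}_{\widetilde N}$ and symmetrically, so $\widetilde h$ is a homeomorphism with $(\widetilde h)^{-1}=\widetilde{h^{-1}}$.

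For the uniform part I would package Proposition~\ref{unif} into a modulus-of-continuity function. Put $D=\mathrm{diam}\,M$, $E=\mathrm{diam}\,N$ and define $\omega:[0,D]\to[0,E]$ by $\omega(s)=\sup\{\,|h(x)-h(y)|:x,y\in M,\ |x-y|\le s\,\}$. This sup is attained (compactness) and $\omega$ is definable, non-decreasing, with $\omega(0)=0$; moreover $\lim_{s\to 0^+}\omega(s)=0$ is exactly the uniform continuity of $h$ provided by Proposition~\ref{unif}, so $\omega$ is continuous at $0$. Since $[0,D]$ is closed and bounded, Proposition~\ref{comp} gives that $\widetilde\omega:[0,\widetilde D]\to[0,\widetilde E]$ is continuous, and $\widetilde\omega(0)=0$. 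Now for $\gamma,\delta\in\widetilde M$ the germ $\mu=|\gamma-\delta|$ lies in $[0,\widetilde D]$ (pointwise $|\gamma(t)-\delta(t)|\le D$), and pointwise $|h(\gamma(t))-h(\delta(t))|\le\omega(|\gamma(t)-\delta(t)|)$, hence in $\widetilde{\mathbb R}$ one has $|\widetilde h(\gamma)-\widetilde h(\delta)|\le\widetilde\omega(\mu)$. Given $\epsilon>0$ in $\widetilde{\mathbb R}$, continuity of $\widetilde\omega$ at $0$ furnishes $\eta>0$ (independent of $\gamma,\delta$) with $\widetilde\omega\big([0,\eta)\big)\subset[0,\epsilon)$, so $|\gamma-\delta|<\eta$ forces $|\widetilde h(\gamma)-\widetilde h(\delta)|<\epsilon$. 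This shows $\widetilde h$ is uniformly continuous; the same argument for $h^{-1}$ makes $\widetilde h$ a uniform homeomorphism. (Alternatively, uniform continuity is a first-order property and $\widetilde{\mathbb R}$ is an elementary extension of $\mathbb R$, as in the remark following Corollary~\ref{coro-main}.)

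Finally, to descend to $h_*$: assuming $h$ and $h^{-1}$ arc-analytic, $h$ carries an analytic arc in $M$ to an analytic arc in $N$, so $h_*:=\widetilde h|_{\mathcal A(M)}$ takes values in $\mathcal A(N)$, and likewise $\widetilde{h^{-1}}|_{\mathcal A(N)}$ maps $\mathcal A(N)$ into $\mathcal A(M)$; by the functoriality noted above these are mutually inverse bijections. Since the $t$-adic topology on $\mathbb R\{t\}^n$ is the restriction of the Euclidean topology of $\mathbb R\{t^{1/\infty}\}^n=\widetilde{\mathbb R}^n$, and $\mathcal A(M),\mathcal A(N)$ carry the induced topologies inside $\widetilde M,\widetilde N$, the restriction of the uniformly continuous homeomorphism $\widetilde h$ is again uniformly continuous, and the same for its inverse. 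Hence $h_*:\mathcal A(M)\to\mathcal A(N)$ is a uniformly continuous homeomorphism, which also reproves Theorem~\ref{thm-comp}.

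The only step requiring genuine care is the passage from uniform continuity of $h$ to that of $\widetilde h$, as there is no off-the-shelf lemma in the paper stating that $\widetilde{(\cdot)}$ preserves uniform continuity; the modulus function $\omega$ is the device that converts Proposition~\ref{unif} into an input for Proposition~\ref{comp}, after which everything is formal.
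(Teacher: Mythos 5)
Your argument is correct in substance but reaches the uniform continuity of $\widetilde h$ by a genuinely different route than the paper. The paper transfers the whole problem to the extended field first: by Lemmas \ref{closed} and \ref{bounded} the sets $\widetilde M$ and $\widetilde N$ are closed and bounded definable sets in the o-minimal structure expanding $\widetilde{\mathbb R}$, the map $\widetilde h$ is continuous there by Corollary \ref{coro-main}, and Proposition \ref{unif} (stated over an arbitrary real closed field precisely for this purpose) is then applied directly to $\widetilde h$ over $\widetilde{\mathbb R}$. You instead apply Proposition \ref{unif} over $\mathbb R$ (where, for compact $M$, it is anyway classical), encode the resulting uniform continuity of $h$ in a definable modulus $\omega$, and transfer $\omega$ via Proposition \ref{comp}. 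What your approach buys is that one never needs to invoke the uniform-continuity proposition in the extended structure, only the already-established continuity transfer; what the paper's approach buys is brevity, since no auxiliary modulus function is needed. The final descent to $h_*$ by restriction, using that the $t$-adic topology is induced by the Euclidean topology of $\mathbb R\{t^{1/\infty}\}$, matches the paper.

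One step in your write-up needs repair: you invoke Proposition \ref{comp} for $\omega$ on all of $[0,D]$, but that proposition requires $\omega$ to be continuous on its whole (closed and bounded) domain, and you only establish continuity of $\omega$ at $0$. Indeed $\omega$ can jump at interior points (already for a $0$-dimensional compact manifold $M$ consisting of two points). The fix is immediate: by the o-minimal monotonicity theorem $\omega$ is continuous on some $(0;s_1)$, and continuity at $0$ then gives continuity on $[0;s_1/2]$; apply Proposition \ref{comp} to $\omega|_{[0;s_1/2]}$ and take your $\eta$ smaller than $\widetilde{s_1}/2$ so that the germ $|\gamma-\delta|$ lands in $\widetilde{[0;s_1/2]}$. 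With that adjustment the proof is complete.
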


\begin{proof} The uniform continuity of $\widetilde h$ follows from Corollary \ref{coro-main} and Proposition \ref{unif}. Then $h_*$ is well-defined by arc-analyticity of $h$ and $h^{-1}$, and finally $h_*$ is uniformly continuous by restriction.
\end{proof}

Actually, we can even drop the compactness condition in Corollary \ref{cor-last} as follows. 

\begin{prop}\label{prop-fin} Let $M\subset \mathbb R^m$ and $N\subset \mathbb R^n$ be globally subanalytic manifolds and $h:M \to N$ be a  globally subanalytic homeomorphism. Then $\widehat h: \widehat M \to \widehat N$ is a uniformly continuous homeomorphism. If moreover $h$ and $h^{-1}$ are arc-analytic, then $h_*: \mathcal A(M) \to \mathcal A(N)$ is a uniformly continuous homeomorphism.
\end{prop}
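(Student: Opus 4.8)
The plan is to deduce the statement from Corollary \ref{non-comp} and the generalised \L ojasiewicz inequality (Proposition \ref{L-loc}), the only genuinely new point being the passage from continuity to \emph{uniform} continuity. First, $\widehat h$ is a bijection onto $\widehat N$ with inverse $\widehat{h^{-1}}$: since $h$ is a bijection, so is $\widetilde h\colon\widetilde M\to\widetilde N$ (it has inverse $\widetilde{h^{-1}}$, as $\widetilde{h^{-1}}\circ\widetilde h=\widetilde{h^{-1}\circ h}=\id_{\widetilde M}$ and symmetrically); and, $h$ and $h^{-1}$ being continuous, $\widetilde h$ and $\widetilde{h^{-1}}$ preserve the property of admitting a limit at $0$ lying in the base set, because $\lim_{t\to 0^+}h\circ\gamma(t)=h\bigl(\lim_{t\to 0^+}\gamma(t)\bigr)$ whenever the inner limit lies in $M$. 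As a globally subanalytic manifold is locally closed in its ambient space, Corollary \ref{non-comp} applies to $h$ and to $h^{-1}$ and shows that $\widehat h$ and $\widehat{h^{-1}}=(\widehat h)^{-1}$ are continuous; hence $\widehat h$ is a homeomorphism. (No boundedness of $M$ or $N$ is used, only local closedness.)

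The core of the argument is uniform continuity. Applying Proposition \ref{L-loc} to the locally closed set $M\times M\subset\R^{2m}$, with $\phi_1(x_1,x_2)=|h(x_1)-h(x_2)|$ and $\phi_2(x_1,x_2)=|x_1-x_2|$ (for which $\phi_1^{-1}(0)=\phi_2^{-1}(0)$ is the diagonal, because $h$ is injective), exactly as in the proof of Corollary \ref{non-comp}, yields a strictly increasing continuous definable $\rho\colon[0,+\infty)\to[0,+\infty)$ with $\rho(0)=0$, and a positive continuous definable $c\colon M\times M\to\R$ equal to $1$ on the diagonal, such that, for $\gamma,\delta\in\widehat M$,
$$|h\circ\gamma(t)-h\circ\delta(t)|\le\rho^{-1}\bigl(|\gamma(t)-\delta(t)|/c(\gamma(t),\delta(t))\bigr)$$
for $t$ near $0$.

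Now let $\epsilon>0$ in $\wR$ be given. By Corollary \ref{coro-main}, $\widetilde{\rho^{-1}}$ is continuous with $\widetilde{\rho^{-1}}(0)=0$, so there is $\eta_0>0$ in $\wR$ with $\widetilde{\rho^{-1}}(s)<\epsilon$ for $0\le s<\eta_0$. Set $\eta=\min(\iota,\eta_0/2)>0$, where $\iota\in\wR$ is a fixed positive infinitesimal (for instance the germ at $0$ of the identity map of $(0;\infty)$). The point is that if $\gamma,\delta\in\widehat M$ satisfy $|\gamma-\delta|<\eta$, then $|\gamma-\delta|$ is infinitesimal, hence $\gamma$ and $\delta$ have the same base point $a=\gamma(0)=\delta(0)\in M$; therefore $c(\gamma(t),\delta(t))\to c(a,a)=1$ as $t\to 0^+$, so $c(\gamma(t),\delta(t))>\tfrac12$ for $t$ near $0$, and the displayed estimate gives $|h\circ\gamma(t)-h\circ\delta(t)|\le\rho^{-1}\bigl(2|\gamma(t)-\delta(t)|\bigr)$ near $0$, i.e.\ $|\widehat h(\gamma)-\widehat h(\delta)|\le\widetilde{\rho^{-1}}\bigl(2|\gamma-\delta|\bigr)$ in $\wR$. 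Since $2|\gamma-\delta|<2\eta\le\eta_0$, this is $<\epsilon$. As $\eta$ depends on $\epsilon$ only, $\widehat h$ is uniformly continuous; the same argument applied to $h^{-1}$ shows $\widehat{h^{-1}}$ is uniformly continuous too.

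For the last assertion, when $h$ and $h^{-1}$ are arc-analytic the induced map $h_*$ is well-defined and bijective from $\mathcal A(M)$ to $\mathcal A(N)$, with inverse $(h^{-1})_*$ (an analytic arc lying in $M$ is sent to an analytic arc lying in $N$). Viewing $\mathbb R\{t\}^m\subset\mathbb R\{t^{1/\infty}\}^m=\widetilde{\mathbb R}^m$, the $t$-adic topology on $\mathcal A(M)$ is the one induced from $\widetilde{\mathbb R}^m$, so $\mathcal A(M)$ is a subspace of $\widehat M$ on which $h_*$ agrees with $\widehat h$; restricting the uniformly continuous homeomorphism $\widehat h$ then gives that $h_*$ is a uniformly continuous homeomorphism, which recovers and extends Corollary \ref{cor-last}. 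The step I expect to require the most care is the uniform-continuity estimate: one must check that the single infinitesimal threshold $\eta\le\iota$ both forces the two arcs to share a base point — which is what keeps the correction factor $c$ uniformly close to $1$ — and stays below the threshold $\eta_0$ governing $\widetilde{\rho^{-1}}$; everything else is routine extension/restriction bookkeeping with the lemmas of Sections \ref{top} and \ref{co}.
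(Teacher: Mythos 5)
Your proof is correct, but it takes a genuinely different route from the paper's. The paper deduces Proposition \ref{prop-fin} from a short lemma: for $f$ between \emph{closed} definable sets, pick $a\in\wR$ larger than every element of $\R$, observe that $\widetilde X\cap[-a,a]^n$ is closed and bounded in $\wR^n$, and apply the uniform-continuity statement of Proposition \ref{unif} \emph{inside the o-minimal structure over $\wR$}; the conclusion for $\widehat h$ follows by restriction. You instead reuse the generalised \L ojasiewicz inequality (Proposition \ref{L-loc}) on $M\times M$ exactly as in Corollary \ref{non-comp}, and upgrade continuity to uniform continuity by choosing the threshold $\eta=\min(\iota,\eta_0/2)$ below a fixed infinitesimal $\iota$, which forces two $\eta$-close arcs of $\widehat M$ to share their base point and hence keeps the correction factor $c$ above $1/2$ uniformly. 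Each approach has its merits: the paper's is shorter and more conceptual, but it relies on transferring Proposition \ref{unif} to the structure over $\wR$ (where $[-a,a]^n$ is definable only with parameters) and is literally stated for closed sets, whereas $M$ is merely locally closed, so a reduction step is left implicit; your argument handles the locally closed case directly, stays over the base field $\mathbb R$ where the \L ojasiewicz data live, and produces an explicit modulus of uniform continuity. Your treatment of the bijectivity of $\widehat h$ and of the restriction to $\mathcal A(M)$ matches what the paper does for Corollary \ref{cor-last}. The only points to polish are cosmetic and shared with the paper's own proofs of Proposition \ref{comp} and Corollary \ref{non-comp}: one should note that $\rho^{-1}$ need only be applied to values in the image of $\rho$ (automatic here since $2|\gamma-\delta|$ is infinitesimal), and that $\rho(0)=0$ follows from evaluating the inequality on the diagonal.
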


The proof is a direct consequence of the following lemma.
\begin{lemma} Let $f:X\to Y$ be a continuous definable map between closed definable subsets of $\R^n$. Then $\hat f:\hat X \to \hat Y$ is uniformly continuous.
\end{lemma}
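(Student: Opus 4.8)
The plan is to reduce the statement to the uniform continuity of $f$ on the closed and bounded pieces $X\cap\overline{B(0,m)}$ provided by Proposition \ref{unif}. The underlying idea is that every arc $\gamma\in\widehat X$ lies, for $t$ close to $0$, inside the piece of radius $|\gamma(0)|+1$, and although this radius is not uniform in $\gamma$, the lack of uniformity can be absorbed by letting the radius grow like $1/t$ inside a modulus of continuity.

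First I would build a definable modulus. For $m,\epsilon\in\R$ with $m,\epsilon>0$, set
$$\omega(m,\epsilon)=\sup\Big\{\eta\in(0;1]:\ \forall\,x,y\in X\cap\overline{B(0,m+1)},\ |x-y|<\eta\Rightarrow|f(x)-f(y)|<\epsilon\Big\}.$$
This is a definable function of $(m,\epsilon)$, non-increasing in $m$ and bounded above by $1$; it is strictly positive because $X\cap\overline{B(0,m+1)}$ is closed and bounded, $f$ is bounded on it, hence its image lies in some closed bounded definable set $Y\cap\overline{B(0,R)}$, and Proposition \ref{unif} applied to $f:X\cap\overline{B(0,m+1)}\to Y\cap\overline{B(0,R)}$ furnishes an admissible $\eta>0$. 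The property I would record is: if $x,y\in X$ satisfy $|x|\le m+1$, $|y|\le m+1$ and $|x-y|<\omega(m,\epsilon)$, then $|f(x)-f(y)|<\epsilon$.

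Next, given $r\in\widetilde\R$ with $r>0$, I would fix a definable representative $r:(0;\epsilon_0)\to\R$ with $r(t)>0$, put $\psi(t)=\omega\big(1/t+1,\,r(t)\big)$, and let $s\in\widetilde\R$ be the germ of $\psi$, so that $0<s\le 1$. To see that $s$ is a modulus of uniform continuity for $\widehat f$ at $r$, take $\gamma,\delta\in\widehat X$ with $|\gamma-\delta|<s$ and set $a=|\gamma(0)|\in\R$: for all $t>0$ small enough (depending on $\gamma,\delta$) one has at once $|\gamma(t)|\le a+1$, hence $|\delta(t)|\le a+1+s(t)\le a+2$; and $1/t+1\ge a+1$, hence $\psi(t)=\omega(1/t+1,r(t))\le\omega(a+1,r(t))$ by monotonicity; and $|\gamma(t)-\delta(t)|<s(t)\le\psi(t)$. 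The recorded property with $m=a+1$, $\epsilon=r(t)$ then yields $|f(\gamma(t))-f(\delta(t))|<r(t)$ for all small $t$, i.e. $|\widehat f(\gamma)-\widehat f(\delta)|<r$ in $\widetilde\R$. (That $\widehat f$ sends $\widehat X$ into $\widehat Y$ is clear since $\lim_{t\to0^+}f(\gamma(t))=f(\gamma(0))\in Y$ by continuity of $f$.)

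The step I expect to be the main obstacle is precisely this last verification: a single germ $s$ has to serve for every $\gamma\in\widehat X$, while the naive modulus $\omega(|\gamma(0)|+1,r)$ degenerates to $0$ as $|\gamma(0)|\to+\infty$, so $s$ cannot be taken below the pointwise infimum over base points. The point is that one only needs $s\le\omega(a+1,r(\cdot))$ \emph{as germs} for each fixed $a\in\R$ separately, and the germ of $t\mapsto\omega(1/t+1,r(t))$ achieves exactly this, because for each fixed $a$ the inequality $1/t+1\ge a+1$ holds once $t$ is small. With the lemma in hand, Proposition \ref{prop-fin} follows by applying it to $h$ and to $h^{-1}$ and restricting the resulting uniformly continuous homeomorphism $\widehat h$ to $\mathcal A(M)$, which is legitimate by the arc-analyticity of $h$ and $h^{-1}$.
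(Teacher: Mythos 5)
Your proof is correct, but it follows a genuinely different route from the paper's. The paper's proof is a two-line transfer argument carried out over the extended field: it picks an element $a\in\widetilde\R$ larger than every element of $\R$, observes that $\widehat X\subset\widetilde X\cap[-a,a]^n$ and that this set is closed and bounded as a definable set in the o-minimal structure over $\widetilde\R$, and then invokes Proposition \ref{unif} \emph{over $\widetilde\R$} (the proposition being stated for an arbitrary real closed field). Your argument instead stays entirely over the base field $\R$: you package Proposition \ref{unif} on the exhaustion $X\cap\overline{B(0,m+1)}$ into a definable two-variable modulus $\omega(m,\epsilon)$, and then defeat the degeneration of $\omega$ as $m\to\infty$ by substituting the infinitesimally-indexed radius $1/t+1$, which eventually dominates $|\gamma(0)|+1$ for each fixed arc. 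This is exactly the hands-on counterpart of the paper's choice of an infinite bound $a$, and the two proofs buy different things: the paper's is shorter but leans on the fact that the collection of the $\widetilde X$'s (together with fibres of definable families, needed for $[-a,a]^n$ to be definable) forms an o-minimal structure expanding $\widetilde\R$, so that Proposition \ref{unif} can legitimately be applied at the second level; yours is more elementary and self-contained, at the cost of checking the small routine points (definability and positivity of $\omega$, which uses that $f$ maps closed bounded definable sets into bounded sets, and the fact that the definable function $t\mapsto\omega(1/t+1,r(t))$ is continuous near $0^+$ after shrinking the interval, so that its germ really lies in $\widetilde\R$). Both establish the same uniform modulus statement, and your concluding derivation of Proposition \ref{prop-fin} matches the paper's.
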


\begin{proof} Let $a\in \widetilde R$ be an element larger than any element of $\R$. since $\widetilde X \cap [-a,a]^n$ is closed and bounded, the result follows from Proposition \ref{unif}.
\end{proof}

\enddocument
\begin{thebibliography}{99}
\bibitem{BM} E. Bierstone, P. Milman, \textit{Semianalytic and subanalytic sets}, Inst. Hautes \'Etudes Sci. Publ. Math. No. 67 (1988), 5-42

\bibitem{BCR}  J. Bochnak, M. Coste, M.F. Roy, Real algebraic geometry, Springer, 1998

\bibitem{Coste} M. Coste, \textit{An introduction to o-minimal geometry}, Dip. Mat. Univ. Pisa, Dottorato di Ricerca in Matematica, Istituti Editoriali e Poligrafici Internazionali, Pisa 2000

\bibitem{vdd} L. van den Dries, \textit{Tame Topology and O-minimal Structures}, London Math. Soc. Lecture Note 248, Cambridge Univ. Press 1998

\bibitem{fichou} G. Fichou, \textit{Motivic invariants of Arc-Symmetric sets and Blow-Nash Equivalence}, Compositio Math. 141  (2005)  655-688




\bibitem{fukui} T. Fukui, \textit{Seeking invariants for blow-analytic equivalence},
Compositio Math. 105 (1997), no. 1, 95-108

\bibitem{FKuoP} T. Fukui, T.-C. Kuo, L. Paunescu, \textit{Constructing blow-analytic isomorphisms}, Ann. Inst. Fourier (Grenoble) 51 (4), (2001), 1071-1087


\bibitem{FKP} T. Fukui, K. Kurdyka, A. Parusi\'nski, \textit{Inverse Function Theorems for Arc-analytic Homeomorphisms}, arXiv:1003.0826, march 2010

 \bibitem{FP} T. Fukui, L. Paunescu, \textit{On blow-analytic equivalence}, Arc spaces and additive 
invariants in real algebraic geometry, Panoramas et Synth\`eses, SMF
26 (2008) 87-125


\bibitem{KP1} S. Koike, A. Parusi\'nski, \textit{Motivic-type Invariants of Blow-analytic Equivalence}, Ann. Institut Fourier 53 , 7(2003), 2061-2104

\bibitem{KP2} S. Koike, A. Parusi\'nski, \textit{Blow-analytic equivalence of two variable real analytic function germs}, J. Algebraic Geom. 19 (2010), no. 3, 439-472


\bibitem{kuo} T.-C. Kuo, \textit{On classification of real singularities}, Invent. Math. 82 (1985), no. 2, 257-262

\bibitem{KK} K. Kurdyka, \textit{Ensembles semi-alg\'ebriques
       sym\'etriques par arcs}, Math. Ann. 282, 445-462 (1988)
       
\bibitem{KK98} K. Kurdyka, \textit{On gradients of functions definable in o-minimal structures}, Ann. Inst. Fourier 48 (1998), no. 3, 769-783





\bibitem{W} W. Whitney, \textit{Analytic extensions of differentiable functions defined in closed sets}, Trans. Amer. Math. Soc., 36 (1934), 63-89

\end{thebibliography}
